\def \wideubar{\underaccent{{\cc@style\underline{\mskip15mu}}}}
\def \widebar{\accentset{{\cc@style\underline{\mskip10mu}}}}
\definecolor{blue}{rgb}{0,0,0.9}
\definecolor{red}{rgb}{0.9,0,0}
\definecolor{green}{rgb}{0,0.9,0}
\definecolor{brown}{rgb}{0.6,0.1,0.1}
\definecolor{lightgreen}{rgb}{0.1,0.5,0.1}
\newcommand{\red}[1]{\begin{color}{red}#1\end{color}}
\newcommand{\brown}[1]{\begin{color}{brown}#1\end{color}}
\begin{document}

\newtheorem{property}{Property}[section]
\newtheorem{proposition}{Proposition}[section]
\newtheorem{append}{Appendix}[section]
\newtheorem{definition}{Definition}[section]
\newtheorem{lemma}{Lemma}[section]
\newtheorem{corollary}{Corollary}[section]
\newtheorem{theorem}{Theorem}[section]
\newtheorem{remark}{Remark}[section]
\newtheorem{problem}{Problem}[section]
\newtheorem{example}{Example}[section]
\newtheorem{assumption}{Assumption}
\renewcommand*{\theassumption}{\Alph{assumption}}

\title{An Inexact Bregman Proximal Difference-of-Convex Algorithm with Two Types of Relative Stopping Criteria}

\author{
Lei Yang\thanks{School of Computer Science and Engineering, and Guangdong Province Key Laboratory of Computational Science, Sun Yat-sen University, Guangzhou, China ({\tt yanglei39@mail.sysu.edu.cn}). The research of this author is supported in part by
the National Key Research and Development Program of China under grant 2023YFB3001704, the National Natural Science Foundation of China under grant 12301411, and the Natural Science Foundation of Guangdong under grant 2023A1515012026.},
\quad
Jingjing Hu\thanks{(Corresponding author) School of Computer Science and Engineering, Sun Yat-sen University, Guangzhou, China ({\tt hujj53@mail2.sysu.edu.cn}). },
\quad
Kim-Chuan Toh\thanks{Department of Mathematics, and Institute of Operations Research and Analytics, National University of Singapore, Singapore ({\tt mattohkc@nus.edu.sg}). The research of this author is supported in part by the Ministry of Education, Singapore, under Academic Research Fund Tier 1 Grant A00084930000.
}
}

\date{\today} 

\maketitle

\begin{abstract}
In this paper, we consider a class of difference-of-convex (DC) optimization problems, which require only a weaker restricted $L$-smooth adaptable property on the smooth part of the objective function, instead of the standard global Lipschitz gradient continuity assumption. Such problems are prevalent in many contemporary applications such as compressed sensing, statistical regression, and machine learning, and can be solved by a general Bregman proximal DC algorithm (BPDCA). However, the existing BPDCA is developed based on the stringent requirement that the involved subproblems must be solved \textit{exactly}, which is often impractical and limits the applicability of the BPDCA. To facilitate the practical implementations and wider applications of the BPDCA, we develop an inexact Bregman proximal difference-of-convex algorithm (iBPDCA) by incorporating two types of relative-type stopping criteria for solving the subproblems. The proposed inexact framework has considerable flexibility to encompass many existing exact and inexact methods, and can accommodate different types of errors that may occur when solving the subproblem. This enables the potential application of our inexact framework across different DC decompositions to facilitate the design of a more efficient DCA scheme in practice. The global subsequential convergence and the global sequential convergence of our iBPDCA are established under suitable conditions including the Kurdyka-{\L}ojasiewicz property. Some numerical experiments on the $\ell_{1-2}$ regularized least squares problem and the constrained $\ell_{1-2}$ sparse optimization problem are conducted to show the superior performance of our iBPDCA in comparison to existing algorithms. These results also empirically validate the necessity and significance of developing different types of stopping criteria to facilitate the efficient computation of the subproblem in each iteration of our iBPDCA.

\vspace{5mm}
\noindent {\bf Keywords:}~~difference-of-convex; Bregman distance; $L$-smooth adaptable property; inexact stopping criterion; Kurdyka-{\L}ojasiewicz property


\end{abstract}

\section{Introduction}

In this paper, we consider a class of difference-of-convex (DC) optimization problems as follows:
\begin{equation}\label{DCpro}
\min\limits_{\bm{x}\in\mathcal{Q}}~~f(\bm{x}) + P_1(\bm{x}) - P_2(\bm{x}),
\end{equation}
where $\mathcal{Q}\subseteq\mathbb{E}$ is a closed convex set with nonempty interior denoted by $\mathrm{int}\,\mathcal{Q}$, and $\mathbb{E}$ is a real finite-dimensional Euclidean space equipped with an inner product $\langle\cdot,\cdot\rangle$ and its induced norm $\|\cdot\|$. The function $P_1:\mathbb{E}\to(-\infty, \infty]$ is a proper closed convex function, $P_2:\mathbb{E}\to\mathbb{R}$ is a continuous convex function, and $f:\mathbb{E}\to\mathbb{R}$ is a continuously differentiable (possibly nonconvex) function on $\mathrm{int}\,\mathcal{Q}$, but may not have a globally Lipschitz continuous gradient. More specific assumptions on problem \eqref{DCpro} can be found later in Assumption \ref{assumA}. Problem \eqref{DCpro} arises in a variety of applications such as compressed sensing \cite{lyhx2015computing,ylhx2015minimization}, statistical regression \cite{fl2001variable,zhang2010nearly}, and machine learning \cite{zhang2010analysis,zts2020learning}, and has been extensively studied in the literature. We refer the readers to \cite{apx2017difference,ht1999dc,ltpd2018DC,lp2024open,pl1997convex} and references therein for more details on DC optimization problems and DC programming.

Due to the significance and popularity of the problem \eqref{DCpro}, tremendous efforts have been made to solve it efficiently, particularly when the problem involves a large number of variables. A classical algorithm for solving problem \eqref{DCpro} is the so-called DC algorithm (DCA) \cite{pl1997convex}, which iteratively replaces the concave part of the objective with a linear majorant approximation and solves the resulting subproblem as follows:
$
\bm{x}^{k+1}\in\min\limits_{\bm{x}\in{\mathcal{Q}}}
\left\{ P_1(\bm{x}) + f(\bm{x})
- \langle\bm{\xi}^k,\,\bm{x}-\bm{x}^k\rangle\right\},
$
where $\bm{\xi}^k\in\partial P_2(\bm{x}^k)$ is a (classical) subgradient of $P_2$ at $\bm{x}^k\in \mathcal{Q}$ (see the next section for the definition). But solving this subproblem is generally computationally demanding. By further exploring the structures of the objective, a so-called proximal DC algorithm (pDCA) (see, e.g, \cite{gtt2018DC,pl1998dc}) can be adopted to solve problem \eqref{DCpro}, which, in each iteration, not only replaces the concave part of the objective with a linear majorant approximation, but also replaces the smooth part with a quadratic majorant approximation. The basic iterative step of the pDCA reads as follows:
\begin{equation*}\label{pDCAsub}
\bm{x}^{k+1} = \min\limits_{\bm{x}\in{\mathcal{Q}}}
\left\{{P_1(\bm{x}) + \langle\nabla f(\bm{x}^k)-\bm{\xi^k},\,\bm{x}-\bm{x}^k\rangle}
+ \frac{\gamma}{2}\|\bm{x}-\bm{x^k}\|^2\right\},
\end{equation*}
where $\bm{\xi}^k\in\partial P_2(\bm{x}^k)$ and $\gamma>0$ is a proximal parameter depending on the Lipschitz constant $L$ of $\nabla f$. When $\mathcal{Q}$ is the entire space $\mathbb{E}$, the subproblem of the pDCA amounts to computing the proximal mapping of $P_1$, which is indeed an easy task for various choices of $P_1$ arising from contemporary application problems. Recently, inspired by Nesterov's acceleration techniques (see, e.g, \cite{n1983a,n2004introductory,n2013gradient}), Wen et al. \cite{wcp2018proximal} further incorporated an extrapolation step into the pDCA to achieve possible acceleration. The resulting algorithm is called pDCAe, which exhibits superior numerical performance.

The success of the pDCA and the pDCAe also motivates many subsequent works focusing on them and their variants for solving problem \eqref{DCpro} under appropriate settings; see, for example, \cite{lz2019nonmonotone,lzs2019enhanced,pra2017computing,plt2023difference}. Among them, Takahashi et al. \cite{tft2022new} considered the Bregman distance $\mathcal{D}_{\phi}$ (defined in the next section) associated with a kernel function $\phi$ as a proximity measure, and generalized the pDCA to a Bregman proximal DC algorithm (BPDCA), whose basic iterative step reads as follows:
\begin{equation}\label{BPDCAsub}
\bm{x}^{k+1} = \min\limits_{\bm{x}\in{\mathcal{Q}}}
\left\{P_1(\bm{x})
+ \langle\nabla f(\bm{x}^k)-\bm{\xi^k},\,\bm{x}-\bm{x}^k\rangle
+ \gamma\mathcal{D}_{\phi}(\bm{x},\,\bm{x}^k)\right\},
\end{equation}
where $\bm{\xi}^k\in\partial P_2\left(\bm{x}^k\right)$. The development of the BPDCA is indeed inspired by recent significant advancements in the Bregman proximal gradient method (see, e.g, \cite{bbt2017descent,bstv2017first,lu2018relatively,t2018simplified}). In comparison to the pDCA, such an extended Bregman framework presents two notable advantages. First, the BPDCA can be developed based on the notion of $L$-smooth adaptable property \cite[Definition 2.2]{bstv2017first}, which is less restrictive than the global Lipschitz gradient continuity of $f$ that is required by the pDCA(e) and its variants. Therefore, the BPDCA is applicable for a wider range of problems. Second, by selecting an appropriate kernel function $\phi$, the BPDCA can better leverage the inherent geometry and structure of the problem, potentially leading to a more tractable subproblem \eqref{BPDCAsub}; see examples in \cite{tft2022new,tti2023blind}.

Despite the aforementioned advantages, the BPDCA still encounters limitations in practical implementations. Indeed, although the kernel function can possibly capture the inherent geometry and structure of the problem, the associated subproblem in form of \eqref{BPDCAsub} generally has no closed-form solution and its computation may still be numerically demanding; see examples in \cite{cv2018entropic,rd2020bregman,yt2023inexact} and our numerical section \ref{sec-num}. Therefore, for the BPDCA to be implementable and practical, it should allow \textit{approximate} solutions to the subproblem with progressive accuracy and the corresponding
stopping criterion should be practically \textit{verifiable}. However, to our knowledge, research on the inexact version of the BPDCA remains unexplored. On the other hand, allowing the approximate minimization of the subproblem in proximal DC-type algorithms using the classical or weighted quadratic proximal term (corresponding to $\phi(\cdot):=\frac{1}{2}\|\cdot\|^2$ or $\phi(\cdot):=\frac{1}{2}\|\cdot\|^2_{H}$ with a symmetric positive definite matrix $H$) has been widely investigated under different (typically simpler) contexts in previous works.
However, these inexact versions either require the exact computation of an element in $\partial P_1$ (e.g., \cite{lt2022inexact,nny2024inexact,sos2016global,twst2020sparse,zts2020learning}), which could be difficult to satisfy when $P_1$ is not a simple function (see an example in the numerical section \ref{sec-num-l12con}), or they lack the global sequential convergence result (e.g., \cite{ot2019inertial,ssc2003proximal,zy2024inexact}).
Moreover, all these inexact versions 
require the global Lipschitz continuity of $\nabla f$ when they explicitly approximate the smooth part $f$ by a quadratic majorant. The aforementioned inadequacies could potentially restrict the applicability of these existing inexact frameworks, even under the Euclidean setting.

In this paper, to facilitate the practical implementations of the BPDCA and complement the aforementioned existing inexact proximal DC-type algorithms, 
we attempt to develop a general inexact Bregman proximal DC algorithm (iBPDCA) for solving the general problem \eqref{DCpro}. As we shall see later in Section \ref{sec-iBPDCA}, our inexact framework is developed based on two types of relative stopping criteria (SC1) and (SC2), which vary in their strategy to control the error incurred in the inexact minimization of the subproblem.
Although the difference may look subtle at the first glance, it indeed influences the behavior of our iBPDCA in both theoretical analysis and computational performance; see more discussions following Algorithm \ref{alg-iBPDCA} as well as Remarks \ref{rek-diff} and \ref{rek-verif}. Moreover, the resulting inexact Bregman proximal DCA framework is rather broad to cover many existing exact and inexact methods (studied in, e.g., \cite{sos2016global,ssc2003proximal,tft2022new,twst2020sparse,zts2020learning}), while also introducing new inexact variants.
Additionally, it can accommodate different types of errors that may occur when solving the subproblem. The inherent versatility and flexibility of our iBPDCA
readily enable its potential application across different DC decompositions for a given problem, thereby facilitating the design of a more efficient DCA scheme, as deliberated in Remark \ref{rek-diff}. Furthermore, in Section \ref{sec-conv}, we meticulously establish both global subsequential and global sequential convergence results for our iBPDCA under suitable conditions including the renown Kurdyka-{\L}ojasiewicz property. Finally, we conduct some numerical experiments in Section \ref{sec-num} to illustrate the performance of our iBPDCA for solving the $\ell_{1-2}$ regularized least squares problem and the constrained $\ell_{1-2}$ sparse optimization problem. The computational results demonstrate the superior performance of our inexact framework in comparison to existing algorithms, and they also empirically validate the necessity and significance of developing different types of stopping criteria for solving the subproblem.

The rest of this paper is organized as follows. We present notation and preliminaries in Section \ref{sec-not}. We then describe our iBPDCA for solving problem \eqref{DCpro} and establish preliminary properties in Section \ref{sec-iBPDCA}, followed by a comprehensive study on convergence analysis in Section \ref{sec-conv}. Some numerical experiments are conducted in Section \ref{sec-num}, with conclusions given in Section \ref{sec-conc}.

\section{Notation and preliminaries}\label{sec-not}

In this paper, we present scalars, vectors, and matrices in lower case letters, bold lower case letters, and upper case letters, respectively. We also use $\mathbb{R}$, $\mathbb{R}^n$, and $\mathbb{R}^{m\times n}$ to denote the set of real numbers, $n$-dimensional real vectors, and $m\times n$ real matrices, respectively. For a vector $\bm{x}\in\mathbb{R}^n$, $x_i$ denotes its $i$-th entry, $\|\bm{x}\|$ denotes its Euclidean norm, $\|\bm{x}\|_1$ denotes its $\ell_1$ norm defined by $\|\bm{x}\|_1:=\sum^n_{i=1}|x_i|$, and $\|\bm{x}\|_H:=\sqrt{\langle\bm{x},\, H \bm{x}\rangle}$ denotes its weighted norm associated with a symmetric positive definite matrix $H$.
For a closed set $\mathcal{X}\subseteq\mathbb{E}$, its indicator function $\iota_{\mathcal{X}}$ is defined by $\iota_{\mathcal{X}}(\bm{x})=0$ if $\bm{x}\in\mathcal{X}$ and $\iota_{\mathcal{X}}(\bm{x})=+\infty$ otherwise. The distance from a point $\bm{x}$ to $\mathcal{X}$ is defined by $\mathrm{dist}(\bm{x},\,\mathcal{X}):=\inf_{\bm{y}\in\mathcal{X}}\|\bm{y}-\bm{x}\|$.

For an extended-real-valued function $h: \mathbb{E} \rightarrow [-\infty,\infty]$, we say that it is \textit{proper} if $h(\bm{x}) > -\infty$ for all $\bm{x} \in \mathbb{E}$ and its domain ${\rm dom}\,h:=\{\bm{x} \in \mathbb{E} : h(\bm{x}) < \infty\}$ is nonempty. A proper function $h$ is said to be closed if it is lower semicontinuous. We also use the notation $\bm{y} \xrightarrow{h} \bm{x}$ to denote $\bm{y} \rightarrow \bm{x}$ and $h(\bm{y}) \rightarrow h(\bm{x})$. The \textit{(limiting) subdifferential} (see \cite[Definition 8.3]{rw1998variational}) of $h$ at $\bm{x} \in \mathrm{dom}\,h$ is given by
\begin{equation*}
\partial h(\bm{x}) := \left\{ \bm{d} \in \mathbb{E}: \exists \,\bm{x}^k \xrightarrow{h} \bm{x}, ~\bm{d}^k \rightarrow \bm{d} ~~\mathrm{with}~\liminf\limits_{\bm{y} \rightarrow \bm{x}^k, \,\bm{y} \neq \bm{x}^k}\, \frac{h(\bm{y})-h(\bm{x}^k)-\langle \bm{d}^k, \bm{y}-\bm{x}^k\rangle}{\|\bm{y}-\bm{x}^k\|} \geq 0\ ~\forall k\right\}.
\end{equation*}
When $h$ is continuously differentiable or convex, the above subdifferential coincides with the classical concept of derivative or convex subdifferential of $h$; see, for example, \cite[Exercise~8.8]{rw1998variational} and \cite[Proposition~8.12]{rw1998variational}. Moreover, for a proper closed convex function $h: \mathbb{E} \rightarrow (-\infty, \infty]$ and a given $\varepsilon \geq 0$, the $\varepsilon$-subdifferential of $h$ at $\bm{x}\in{\rm dom}\,h$ is defined by $\partial_{\varepsilon} h(\bm{x}):=\{\bm{d}\in\mathbb{E}: h(\bm{y}) \geq h(\bm{x}) + \langle \bm{d}, \,\bm{y}-\bm{x} \rangle - \varepsilon, ~\forall\,\bm{y}\in\mathbb{E}\}$, and when $\varepsilon=0$, $\partial_{\varepsilon}h$ simply reduces to the classical convex subdifferential $\partial h$.

The function $h$ is \textit{essentially smooth} if (i) $\mathrm{int}\,\mathrm{dom}\,h$ is non-empty; (ii) $h$ is differentiable on $\mathrm{int}\,\mathrm{dom}\,h$; (iii) $\|\nabla h(\bm{x}^k)\|\to\infty$ for every sequence $\{\bm{x}^k\}$ in $\mathrm{int}\,\mathrm{dom}\,h$ converging to a boundary point of $\mathrm{int}\,\mathrm{dom}\,h$; see
\cite[page 251]{r1970convex}. For a proper closed function $h: \mathbb{E} \rightarrow (-\infty,\infty]$ and $\nu>0$, the proximal mapping of $\nu h$ at $\bm{y}\in\mathbb{E}$ is defined by
\begin{equation*}
\texttt{prox}_{\nu h}(\bm{y}) := \mathop{\mathrm{Argmin}}\limits_{\bm{x}\in\mathbb{E}} \left\{h(\bm{x}) + \frac{1}{2\nu}\|\bm{x} - \bm{y}\|^2\right\}.
\end{equation*}

Given a proper closed strictly convex function $\phi: \mathbb{E} \rightarrow (-\infty,\infty]$, finite at $\bm{x}$, $\bm{y}$ and differentiable at $\bm{y}$ but not necessarily at $\bm{x}$, the \textit{Bregman distance} \cite{b1967relaxation} between $\bm{x}$ and $\bm{y}$ associated with the kernel function $\phi$ is defined as
\begin{equation*}
\mathcal{D}_{\phi}(\bm{x}, \,\bm{y}) := \phi(\bm{x}) - \phi(\bm{y}) - \langle \nabla\phi(\bm{y}), \,\bm{x} - \bm{y} \rangle. 
\end{equation*}
It is easy to see that $D_{\phi}(\bm{x}, \,\bm{y})\geq0$ and equality holds if and only if $\bm{x}=\bm{y}$ due to the strict convexity of $\phi$. When $\mathbb{E}:=\mathbb{R}^n$ and $\phi(\cdot):=\|\cdot\|^2$, $\mathcal{D}_{\phi}(\cdot,\cdot)$ readily recovers the classical squared Euclidean distance. Based on the Bregman distance, we then define a \textit{restricted $L$-smooth adaptable property} as follows.

\begin{definition}[{\bf Restricted $L$-smooth adaptable on $\mathcal{X}$}]\label{defresLsmad}
Let $f, \,\phi: \mathbb{E}\to(-\infty, \infty]$ be proper closed convex functions with $\mathrm{dom}\,f\supseteq\mathrm{dom}\,\phi$, and  $f,\,\phi$ are differentiable on $\mathrm{int}\,\mathrm{dom}\,\phi$. Given a closed set $\mathcal{X}\subseteq\mathbb{E}$ with $\mathcal{X}\cap\mathrm{int}\,\mathrm{dom}\,\phi\neq\emptyset$, we say that $(f,\phi)$ is $L$-smooth adaptable restricted on $\mathcal{X}$ if there exists $L\geq0$ such that
\begin{equation*}\label{resLsmad}
\big|f(\bm{y}) - f(\bm{x}) - \langle \nabla f(\bm{x}), \,\bm{y}-\bm{x}\rangle\big|
\leq L\mathcal{D}_{\phi}(\bm{y}, \,\bm{x}), \quad \forall\,
\bm{x}\in\mathcal{X}\cap \mathrm{int}\,\mathrm{dom}\,\phi,
~\bm{y}\in\mathcal{X}\cap \mathrm{dom}\,\phi.
\end{equation*}
\end{definition}

Some remarks are in order concerning this definition. The above restricted $L$-smooth adaptable property is derived by modifying the original $L$-smooth adaptable property introduced in \cite[Section 2.2]{bstv2017first} by imposing a restricted set $\mathcal{X}$, and it readily reduces to the original notion when $\mathcal{X}$ is the entire space $\mathbb{E}$. Implementing such a restriction would facilitate the expansion of the original notion to encompass a broader range of choices of $(f, \,\phi)$ with a proper $\mathcal{X}$. For example, when $\nabla f$ is $L_f$-Lipschitz continuous on $\mathbb{R}^n$ and $\phi$ is $\mu_{\phi}$-strongly convex on $\mathcal{X}\subseteq\mathbb{R}^n$, then it can be verified that $(f,\phi)$ is $\frac{L_f}{\mu_{\phi}}$-smooth adaptable restricted on $\mathcal{X}$, but $(f,\phi)$ may not be $L$-smooth adaptable on $\mathrm{int}\,\mathrm{dom}\,\phi$ according to the original definition in \cite[Section 2.2]{bstv2017first} because $\phi$ may not be strongly convex on its whole interior. For example, the entropy function $\phi(x)=\sum^n_{i=1}x_i(\log x_i - 1)$ is $\frac{1}{\alpha}$-strongly convex on $[0,\,\alpha]^n$ with any $\alpha>0$, but it is not strongly convex on $\mathrm{int}\,\mathrm{dom}\,\phi=\mathbb{R}^n_{++}$. Therefore, adopting the notion of restricted $L$-smooth adaptable property in Definition \ref{defresLsmad} could broaden the potential applications of the algorithm developed in this work.

In order to establish the rigorous convergence analysis for the Bregman-type algorithm, we make the following blanket technical assumptions.

\begin{assumption}\label{assumA}
Problem \eqref{DCpro} and the kernel function $\phi$ satisfy the following assumptions.
\begin{itemize}

\item[{\bf A1.}] $\phi:\mathbb{E}\to(-\infty, \infty]$ is essentially smooth and strictly convex on  $\mathrm{dom}\,\phi$. Moreover, $\overline{\mathrm{dom}\,\phi} = \mathcal{Q}$, where $\overline{\mathrm{dom}\,\phi}$ denotes the closure of ${\mathrm{dom}\,\phi}$.

\item[{\bf A2.}] $P_1:\mathbb{E}\to(-\infty, \infty]$ is a proper closed convex function with $\mathrm{dom}\,P_1\cap\mathrm{int}\,\mathcal{Q}\neq\emptyset$, and $P_2:\mathbb{E}\to\mathbb{R}$ is a continuous convex function.

\item[{\bf A3.}] $f:\mathbb{E}\to(-\infty, \infty]$ is a proper closed function with $\mathrm{dom}\,f\supseteq\mathrm{dom}\,\phi$ and $f$ is continuously differentiable on $\mathrm{int}\,\mathrm{dom}\,f$. Moreover, there exists a closed set $\mathcal{X} \supseteq \mathrm{dom}\,P_1\cap\mathcal{Q}$ such that $(f,\phi)$ is ${L}$-smooth adaptable restricted on $\mathcal{X}$.

\item[{\bf A4.}] $F(\bm{x}) := \iota_{\mathcal{Q}}(\bm{x}) + P_1(\bm{x}) - P_2(\bm{x}) + f(\bm{x})$ is level-bounded, i.e., the level set $\big\{\bm{x}\in\mathbb{E}: F(\bm{x}) \leq \alpha\big\}$ is bounded (possibly empty) for every $\alpha\in\mathbb{R}$, where $\iota_{\mathcal{Q}}$ is the indicator function on the set $\mathcal{Q}$.

\item[{\bf A5.}] Each subproblem of the iBPDCA in Algorithm \ref{alg-iBPDCA} is well-defined in the sense that the set of optimal solutions of the subproblem is nonempty and located in $\mathrm{int}\,\mathrm{dom}\,\phi$.
\end{itemize}
\end{assumption}

The above assumptions are commonly made for studying the convergence of the Bregman-type algorithms. Assumption \ref{assumA}5 ensures the well-posedness of iterates. It can be satisfied when, for example, $\phi$ is strongly convex with $\mathrm{dom}\,\phi=\mathbb{E}$. Other sufficient conditions can be found in \cite[Section 3]{bstv2017first}.

We next recall the Kurdyka-{\L}ojasiewicz (KL) property (see \cite{ab2009on,abrs2010proximal,abs2013convergence,bdl2007the,bst2014proximal} for more details), which is now a common technical condition for establishing the convergence of the whole sequence. A large number of functions such as proper closed semialgebraic functions satisfy the KL property \cite{abrs2010proximal,abs2013convergence}. For notational simplicity, let $\Xi_{\nu}$ ($\nu>0$) denote a class of concave functions $\varphi:[0,\nu) \rightarrow \mathbb{R}_{+}$ satisfying: (i) $\varphi(0)=0$; (ii) $\varphi$ is continuously differentiable on $(0,\nu)$ and continuous at $0$; (iii) $\varphi'(t)>0$ for all $t\in(0,\nu)$. Then, the KL property can be described as follows.

\begin{definition}[\textbf{KL property and KL function}]
Let $h: \mathbb{R}^n \rightarrow \mathbb{R} \cup \{+\infty\}$ be a proper closed function.
\begin{itemize}
\item[(i)] For $\tilde{\bm{x}}\in{\rm dom}\,\partial h:=\{\bm{x} \in \mathbb{R}^{n}: \partial h(\bm{x}) \neq \emptyset\}$, if there exist a $\nu\in(0, +\infty]$, a neighborhood $\mathcal{V}$ of $\tilde{\bm{x}}$ and a function $\varphi \in \Xi_{\nu}$ such that for all $\bm{x} \in \mathcal{V} \cap \{\bm{x}\in \mathbb{R}^{n} : h(\tilde{\bm{x}})<h(\bm{x})<h(\tilde{\bm{x}})+\nu\}$, it holds that
    \begin{eqnarray*}
    \varphi'(h(\bm{x})-h(\tilde{\bm{x}}))\,\mathrm{dist}(0, \,\partial h(\bm{x})) \geq 1,
    \end{eqnarray*}
    then $h$ is said to have the \textbf{Kurdyka-{\L}ojasiewicz (KL)} property at $\tilde{\bm{x}}$.

\item[(ii)] If $h$ satisfies the KL property at each point of ${\rm dom}\,\partial h$, then $h$ is called a KL function.
\end{itemize}
\end{definition}

%

We also recall the uniformized KL property, which was established in \cite[Lemma 6]{bst2014proximal}.

\begin{proposition}[\textbf{Uniformized KL property}]\label{uniKL}
Suppose that $h: \mathbb{R}^n \rightarrow \mathbb{R} \cup \{+\infty\}$ is a proper closed function and $\Gamma$ is a compact set. If $h \equiv \zeta$ on $\Gamma$ for some constant $\zeta$ and satisfies the KL property at each point of $\Gamma$, then there exist $\varepsilon>0$, $\nu>0$ and $\varphi \in \Xi_{\nu}$ such that
\begin{eqnarray*}
\varphi'(h(\bm{x}) - \zeta)\,\mathrm{dist}(0, \,\partial h(\bm{x})) \geq 1
\end{eqnarray*}
for all $\bm{x} \in \{\bm{x}\in\mathbb{R}^{n}: \mathrm{dist}(\bm{x},\,\Gamma)<\varepsilon\} \cap \{\bm{x}\in \mathbb{R}^{n} : \zeta < h(\bm{x}) < \zeta + \nu\}$.
\end{proposition}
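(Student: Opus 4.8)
The statement to prove is the uniformized KL property (Proposition \ref{uniKL}). The plan is to deduce it from the pointwise KL property at every point of the compact set $\Gamma$, together with $h \equiv \zeta$ on $\Gamma$, via a standard compactness argument. First I would invoke the KL property at each $\tilde{\bm{x}} \in \Gamma$: since $\tilde{\bm{x}} \in {\rm dom}\,\partial h$ (one should note that points of $\Gamma$ on which $h$ is constant and the hypothesis implicitly places $\Gamma \subseteq {\rm dom}\,\partial h$, or else the desorbing inequality is understood with the convention $\mathrm{dist}(0,\emptyset)=+\infty$ making it trivially true), there exist $\varepsilon_{\tilde{\bm{x}}}>0$, $\nu_{\tilde{\bm{x}}}>0$ and $\varphi_{\tilde{\bm{x}}} \in \Xi_{\nu_{\tilde{\bm{x}}}}$ such that $\varphi_{\tilde{\bm{x}}}'(h(\bm{x})-\zeta)\,\mathrm{dist}(0,\partial h(\bm{x})) \geq 1$ for all $\bm{x}$ in the intersection of the ball $B(\tilde{\bm{x}},\varepsilon_{\tilde{\bm{x}}})$ with $\{\zeta < h(\bm{x}) < \zeta + \nu_{\tilde{\bm{x}}}\}$ (here I have used $h(\tilde{\bm{x}}) = \zeta$).

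Next I would extract a finite subcover: the family $\{B(\tilde{\bm{x}}, \varepsilon_{\tilde{\bm{x}}}/2)\}_{\tilde{\bm{x}}\in\Gamma}$ is an open cover of the compact set $\Gamma$, so there are finitely many points $\tilde{\bm{x}}^{(1)},\dots,\tilde{\bm{x}}^{(p)}$ with $\Gamma \subseteq \bigcup_{i=1}^{p} B(\tilde{\bm{x}}^{(i)}, \varepsilon_i/2)$, where $\varepsilon_i := \varepsilon_{\tilde{\bm{x}}^{(i)}}$. Set $\varepsilon := \tfrac{1}{2}\min_{1\le i\le p}\varepsilon_i > 0$ and $\nu := \min_{1\le i\le p}\nu_i > 0$ with $\nu_i := \nu_{\tilde{\bm{x}}^{(i)}}$. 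Then any $\bm{x}$ with $\mathrm{dist}(\bm{x},\Gamma) < \varepsilon$ lies in $B(\tilde{\bm{x}}^{(i)},\varepsilon_i)$ for some $i$ (pick the nearest point of $\Gamma$, which is within $\varepsilon_i/2$ of some center, then add $\varepsilon \le \varepsilon_i/2$). The remaining task is to build a single $\varphi \in \Xi_\nu$ that works simultaneously. The natural choice is $\varphi := \sum_{i=1}^{p} \varphi_i$ restricted to $[0,\nu)$ (each $\varphi_i$ is defined on $[0,\nu_i) \supseteq [0,\nu)$); this is a finite sum of functions in $\Xi_{\nu}$, hence concave, continuously differentiable on $(0,\nu)$, continuous at $0$ with value $0$, and with $\varphi'(t) = \sum_i \varphi_i'(t) > 0$. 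For $\bm{x}$ with $\mathrm{dist}(\bm{x},\Gamma) < \varepsilon$ and $\zeta < h(\bm{x}) < \zeta + \nu$, choosing the index $i$ with $\bm{x} \in B(\tilde{\bm{x}}^{(i)},\varepsilon_i)$ and noting $\nu \le \nu_i$, the pointwise inequality gives $\varphi_i'(h(\bm{x})-\zeta)\,\mathrm{dist}(0,\partial h(\bm{x})) \ge 1$, and since $\varphi'(h(\bm{x})-\zeta) \ge \varphi_i'(h(\bm{x})-\zeta) > 0$ we conclude $\varphi'(h(\bm{x})-\zeta)\,\mathrm{dist}(0,\partial h(\bm{x})) \ge 1$, as desired.

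The argument is essentially routine; the only genuinely delicate point — and the one I expect to be the main obstacle to writing it cleanly — is the bookkeeping at the boundary between ``covering $\Gamma$'' and ``covering the $\varepsilon$-neighborhood of $\Gamma$'': one must be careful to use half-radius balls in the finite subcover so that an extra $\varepsilon$-slack still lands inside a full-radius ball where the pointwise estimate is valid. A secondary subtlety is ensuring each $\varphi_i$ is genuinely defined (and retains its defining properties) on the smaller interval $[0,\nu)$, which is immediate since $\Xi_{\nu_i} \subseteq \Xi_{\nu}$ by restriction when $\nu \le \nu_i$. Since this is exactly \cite[Lemma 6]{bst2014proximal}, one may alternatively just cite it; but the self-contained proof above is short enough to include.
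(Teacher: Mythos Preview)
The paper does not supply its own proof of this proposition; it merely recalls the result and cites \cite[Lemma 6]{bst2014proximal}. Your self-contained argument is correct and is essentially the standard compactness proof given in that reference, so there is nothing to compare beyond noting that you have written out what the paper chose to cite.
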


%

\section{An inexact Bregman proximal difference-of-convex algorithm}\label{sec-iBPDCA}

In this section, we shall develop an inexact Bregman proximal difference-of-convex algorithm (iBPDCA) based on two types of relative stopping criteria for solving problem \eqref{DCpro}, and study the preliminary convergence properties. The complete framework is presented in Algorithm \ref{alg-iBPDCA}.

\begin{algorithm}[ht]
\caption{An inexact Bregman proximal difference-of-convex algorithm (iBPDCA) for solving problem \eqref{DCpro}}\label{alg-iBPDCA}
\textbf{Input:} Follow Assumption \ref{assumA} to choose a kernel function $\phi$.
Arbitrarily choose $\bm{x}^0\in\mathrm{dom}\,P_1\,\cap\,\mathrm{int}\,\mathrm{dom}\,\phi$ and a sequence $\{\gamma_k\}_{k=0}^{\infty}$ satisfying $L<\gamma_{\min}\leq\gamma_k\leq\gamma_{\max}<\infty$ for all $k\geq0$. For criterion (SC1), choose $0\leq\sigma<(\gamma_{\min}-L)/\gamma_{\min}$. For criterion (SC2), choose {$0\leq\sigma<(\gamma_{\min}-L)/\gamma_{\max}$} and $\bm{x}^{-1}\in\mathrm{int}\,\mathrm{dom}\,\phi$.  \\[3pt]
\textbf{while} a termination criterion is not met, \textbf{do} \vspace{-2mm}
\begin{itemize}[leftmargin=2cm]
\item[\textbf{Step 1}.] Take any $\bm{\xi}^k\in\partial P_2(\bm{x}^k)$, and find a point $\bm{x}^{k+1}$ associated with an error pair $(\Delta^k,\,\delta_k)$ by approximately solving
    \begin{equation}\label{iBCDA-subpro}
    \min\limits_{\bm{x}}~ P_1(\bm{x}) + \langle \nabla f(\bm{x}^k)-\bm{\xi}^k, \,\bm{x}-\bm{x}^k\rangle + \gamma_k\mathcal{D}_{\phi}(\bm{x},\,\bm{x}^k),
    \end{equation}
    such that $\bm{x}^{k+1}\in\mathrm{dom}\,P_1\,\cap\,\mathrm{int}\,\mathrm{dom}\,\phi$, $\delta_k\geq0$, and
	\begin{equation}\label{iBPDCA-inexcond}
    \Delta^k\in \partial_{\delta_k}{P_1}(\bm{x}^{k+1}) + \nabla f(\bm{x}^k)-\bm{\xi}^k + \gamma_k\big(\nabla \phi(\bm{x}^{k+1})-\nabla \phi(\bm{x}^{k})\big),
	\end{equation}
	satisfying one of the following relative stopping criteria:
    \begin{equation*}
    \begin{aligned}
    &\textbf{(SC1)} \qquad \|\Delta^k\|^2 + |\langle\Delta^k, \,\bm{x}^{k+1}-\bm{x}^{k}\rangle| + \delta_k \leq \sigma\gamma_k\mathcal{D}_{\phi}(\bm{x}^{k+1}, \,\bm{x}^{k}),  \\
    &\textbf{(SC2)} \qquad \|\Delta^k\|^2 + |\langle\Delta^k, \,\bm{x}^{k+1}-\bm{x}^{k}\rangle| + \delta_k \leq \sigma\gamma_k\mathcal{D}_{\phi}(\bm{x}^{k}, \,\bm{x}^{k-1}).
    \end{aligned}
    \end{equation*}

\item [\textbf{Step 2}.] Set $k = k+1$ and go to \textbf{Step 1}. \vspace{-1.5mm}
\end{itemize}
\textbf{end while}  \\
\textbf{Output}: $\bm{x}^{k}$ \vspace{0.5mm}
\end{algorithm}

Note from the iBPDCA in Algorithm \ref{alg-iBPDCA} that, at each iteration, our inexact framework allows one to approximately solve the subproblem \eqref{iBCDA-subpro} under condition \eqref{iBPDCA-inexcond} satisfying one of the relative stopping criteria (SC1) and (SC2). Since the subproblem \eqref{iBCDA-subpro} admits a solution $\bm{x}^{k,*} \in \mathrm{dom}\,P_1\,\cap\,\mathrm{int}\,\mathrm{dom}\,\phi$ (by Assumption \ref{assumA}5), then condition \eqref{iBPDCA-inexcond} always holds at $\bm{x}^{k+1}=\bm{x}^{k,*}$ with $\|\Delta^k\|=\delta_k=0$ and hence it is achievable. Moreover, when setting $\sigma=0$ in (SC1) or (SC2), it means that $\bm{x}^{k+1}$ is an exact solution of the subproblem, and thus, our iBPDCA readily reduces to an exact BPDCA studied in \cite{tft2022new}.

One can also observe that, in our inexact framework, an approximate solution is characterized by the error term $\Delta^k$ appearing on the left-hand-side of the optimality condition and $\partial_{\delta_k}P_1$ serving as an approximation of $\partial P_1$. This makes our inexact framework rather flexible to accommodate different scenarios when solving the subproblem inexactly. Moreover, the key difference between (SC1) and (SC2) lies in the strategy to control the error incurred in the inexact minimization of the subproblem. Specifically, the error ${\|\Delta^k\|^2 + |\langle\Delta^k, \,\bm{x}^{k+1}-\bm{x}^{k}\rangle| + \delta_k}$ is regulated by the most recent successive difference $\sigma\gamma_k\mathcal{D}_{\phi}(\bm{x}^{k+1}, \,\bm{x}^{k})$ in (SC1), whereas it is governed by the preceding successive difference $\sigma\gamma_k\mathcal{D}_{\phi}(\bm{x}^{k}, \,\bm{x}^{k-1})$ in (SC2). This difference may look subtle, but it indeed results in distinct behaviors of our iBPDCA in subsequent theoretical analysis and computational performance. Specifically, in comparison to the iBPDCA with (SC1), the iBPDCA with (SC2) would require a possibly narrower selection range for the tolerance parameter $\sigma$ to guarantee the convergence, and the related analysis is more intricate. But, from a computational perspective, the iBPDCA with (SC2) could be more advantageous in saving the verification cost, especially when the computation of the associated Bregman distance is demanding for large-scale problems; see Remark \ref{rek-verif} for more details. This computational benefit indeed serves as the motivation for developing the stopping criterion (SC2) in this work.

The iBPDCA in Algorithm \ref{alg-iBPDCA} also offers a comprehensive inexact algorithmic framework that can encompass numerous existing exact and inexact methods (studied in, e.g., \cite{sos2016global,ssc2003proximal,tft2022new,twst2020sparse,zts2020learning}) and potentially introduces new inexact variants. For example, when the concave part $P_2\equiv0$ and the smooth part $f$ is convex, our iBPDCA gives an inexact Bregman proximal gradient method for solving a convex composite problem. This inexact version employs a more flexible relative stopping criterion,  (SC1) or (SC2), making it different from the existing inexact version developed in \cite{yt2023inexact}. Moreover, when the smooth part $f$ is convex with a global Lipschitz continuous gradient and the kernel function is chosen as $\phi(\bm{x}):=\frac{1}{2}\|\bm{x}\|^2$, our iBPDCA also gives an inexact version for the classic proximal DC algorithm. In addition, when the smooth part $f\equiv0$, the kernel function is chosen as $\phi(\bm{x})=\frac{1}{2}\|\bm{x}\|^2+\frac{1}{2}\|A\bm{x}\|^2$ with $A\in\mathbb{R}^{m\times n}$ being a coefficient matrix specified by the problem,
and no error occurs in the computation of $\partial P_1$ (namely, $\delta_k\equiv0$), our iBPDCA with (SC1) subsumes the inexact proximal DC algorithms developed by Tang et al. \cite{twst2020sparse} and Zhang et al. \cite{zts2020learning} for solving the nonconvex square-root-loss regression problem and the MCP penalized graph Laplacian learning problem, respectively. In comparison to the inexact versions in \cite{twst2020sparse,zts2020learning}, our iBPDCA addresses a more general problem \eqref{DCpro} in a broader Bregman framework. It also allows an appropriate approximation of $\partial P_1$, which could be essential for practical implementations when solving a problem with complex constraints; see an example in the numerical section \ref{sec-num-l12con}. This versatility thus enhances the applicability of our approach to a wider range of problems.

We next establish some useful properties of our iBPDCA, which will pave the way to the convergence analysis in the subsequent section. 

\begin{lemma}[\textbf{Approximate sufficient descent property}]\label{lem-suffdes}
Suppose that Assumption \ref{assumA} holds. Let $\{\bm{x}^k\}$ be the sequence generated by the iBPDCA in Algorithm \ref{alg-iBPDCA}, then
\begin{equation}\label{suffdes-iBPDCA}
F(\bm{x}^{k+1}) - F(\bm{x}^{k})
\leq -(\gamma_k-L)\mathcal{D}_{\phi}(\bm{x}^{k+1},\,\bm{x}^{k})
-\gamma_k\mathcal{D}_{\phi}(\bm{x}^k,\,\bm{x}^{k+1})
+ \big|\langle\Delta^k, \,\bm{x}^{k+1}-\bm{x}^{k}\rangle\big| + \delta_k.
\end{equation}
\end{lemma}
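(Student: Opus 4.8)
The idea is to chain together three elementary estimates: (i) the DC/convexity bound on $P_2$, (ii) the restricted $L$-smooth adaptable bound on $f$, and (iii) the approximate optimality condition \eqref{iBPDCA-inexcond} together with the $\delta_k$-subgradient inequality for $P_1$, and then to recognize the three-point Bregman identity hiding in the resulting expression. Throughout I would work at the point $\bm{x}^{k+1}$, which lies in $\mathrm{dom}\,P_1\cap\mathrm{int}\,\mathrm{dom}\,\phi$ by construction, so all the gradients $\nabla\phi(\bm{x}^{k+1})$, $\nabla\phi(\bm{x}^k)$, $\nabla f(\bm{x}^k)$ that appear are well-defined.

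First I would write $F(\bm{x}^{k+1})-F(\bm{x}^k)=\big[f(\bm{x}^{k+1})-f(\bm{x}^k)\big]+\big[P_1(\bm{x}^{k+1})-P_1(\bm{x}^k)\big]-\big[P_2(\bm{x}^{k+1})-P_2(\bm{x}^k)\big]$ (the indicator terms vanish since both iterates lie in $\mathcal{Q}$). For the $P_2$ term, convexity of $P_2$ and $\bm{\xi}^k\in\partial P_2(\bm{x}^k)$ give $-\big(P_2(\bm{x}^{k+1})-P_2(\bm{x}^k)\big)\le -\langle\bm{\xi}^k,\,\bm{x}^{k+1}-\bm{x}^k\rangle$. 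For the $f$ term, since $\bm{x}^k,\bm{x}^{k+1}\in\mathcal{X}\cap\mathrm{dom}\,\phi$ and $\bm{x}^k\in\mathrm{int}\,\mathrm{dom}\,\phi$, the restricted $L$-smad property (Definition \ref{defresLsmad}, available via Assumption \ref{assumA}3) yields $f(\bm{x}^{k+1})-f(\bm{x}^k)\le \langle\nabla f(\bm{x}^k),\,\bm{x}^{k+1}-\bm{x}^k\rangle+L\,\mathcal{D}_{\phi}(\bm{x}^{k+1},\bm{x}^k)$. For the $P_1$ term, I would rewrite \eqref{iBPDCA-inexcond} as $\Delta^k-\nabla f(\bm{x}^k)+\bm{\xi}^k-\gamma_k\big(\nabla\phi(\bm{x}^{k+1})-\nabla\phi(\bm{x}^k)\big)\in\partial_{\delta_k}P_1(\bm{x}^{k+1})$, and then the definition of the $\delta_k$-subdifferential applied with $\bm{y}=\bm{x}^k$ gives $P_1(\bm{x}^k)\ge P_1(\bm{x}^{k+1})+\langle\Delta^k-\nabla f(\bm{x}^k)+\bm{\xi}^k-\gamma_k(\nabla\phi(\bm{x}^{k+1})-\nabla\phi(\bm{x}^k)),\,\bm{x}^k-\bm{x}^{k+1}\rangle-\delta_k$, i.e. $P_1(\bm{x}^{k+1})-P_1(\bm{x}^k)\le \langle\Delta^k-\nabla f(\bm{x}^k)+\bm{\xi}^k-\gamma_k(\nabla\phi(\bm{x}^{k+1})-\nabla\phi(\bm{x}^k)),\,\bm{x}^{k+1}-\bm{x}^k\rangle+\delta_k$.

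Adding the three estimates, the $\langle\nabla f(\bm{x}^k),\cdot\rangle$ and $\langle\bm{\xi}^k,\cdot\rangle$ terms cancel exactly, leaving
\[
F(\bm{x}^{k+1})-F(\bm{x}^k)\le L\,\mathcal{D}_{\phi}(\bm{x}^{k+1},\bm{x}^k)+\langle\Delta^k,\,\bm{x}^{k+1}-\bm{x}^k\rangle+\delta_k-\gamma_k\langle\nabla\phi(\bm{x}^{k+1})-\nabla\phi(\bm{x}^k),\,\bm{x}^{k+1}-\bm{x}^k\rangle.
\]
The last step is the three-point (Bregman) identity: for any $\bm u,\bm v\in\mathrm{int}\,\mathrm{dom}\,\phi$ one has $\langle\nabla\phi(\bm u)-\nabla\phi(\bm v),\,\bm u-\bm v\rangle=\mathcal{D}_{\phi}(\bm u,\bm v)+\mathcal{D}_{\phi}(\bm v,\bm u)$, which follows by simply expanding both Bregman distances from the definition. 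Applying this with $\bm u=\bm x^{k+1}$, $\bm v=\bm x^k$ converts the last term into $-\gamma_k\mathcal{D}_{\phi}(\bm x^{k+1},\bm x^k)-\gamma_k\mathcal{D}_{\phi}(\bm x^k,\bm x^{k+1})$; combining the $L\,\mathcal{D}_{\phi}(\bm x^{k+1},\bm x^k)$ term with the first of these gives the coefficient $-(\gamma_k-L)$, and finally bounding $\langle\Delta^k,\,\bm x^{k+1}-\bm x^k\rangle\le|\langle\Delta^k,\,\bm x^{k+1}-\bm x^k\rangle|$ produces exactly \eqref{suffdes-iBPDCA}. There is no real obstacle here — the only point that needs a little care is checking that the restricted $L$-smad inequality is applicable with the correct choice of endpoints (this is where Assumption \ref{assumA}3, namely $\mathcal{X}\supseteq\mathrm{dom}\,P_1\cap\mathcal{Q}$ and $\bm x^{k+1},\bm x^k\in\mathrm{dom}\,P_1\cap\mathrm{int}\,\mathrm{dom}\,\phi$, is used), and that $\Delta^k$ genuinely sits inside $\partial_{\delta_k}P_1(\bm x^{k+1})$ after rearranging \eqref{iBPDCA-inexcond} so that the $\delta_k$-subgradient inequality can legitimately be invoked.
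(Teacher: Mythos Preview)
Your proof is correct and follows essentially the same approach as the paper: both combine the $\delta_k$-subgradient inequality for $P_1$ obtained from \eqref{iBPDCA-inexcond}, the convexity inequality for $P_2$, the restricted $L$-smooth adaptable bound on $f$, and the three-point Bregman identity $\langle\nabla\phi(\bm{u})-\nabla\phi(\bm{v}),\,\bm{u}-\bm{v}\rangle=\mathcal{D}_\phi(\bm{u},\bm{v})+\mathcal{D}_\phi(\bm{v},\bm{u})$, then bound $\langle\Delta^k,\cdot\rangle$ by its absolute value. The only cosmetic difference is that the paper names the subgradient $\bm{d}^{k+1}\in\partial_{\delta_k}P_1(\bm{x}^{k+1})$ explicitly before invoking the inequality, whereas you rearrange \eqref{iBPDCA-inexcond} directly.
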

\begin{proof}
First, from condition \eqref{iBPDCA-inexcond}, there exists a $\bm{d}^{k+1} \in \partial_{\delta_k} {P_1}(\bm{x}^{k+1})$ such that
\begin{equation*}
\Delta^k = \bm{d}^{k+1} + \nabla f(\bm{x}^k)-\bm{\xi}^k + \gamma_k\big(\nabla \phi(\bm{x}^{k+1})-\nabla \phi(\bm{x}^{k})\big).
\end{equation*}
It then follows from $\bm{x}^k\in\mathrm{dom}\,P_1\,\cap\,\mathrm{int}\,\mathrm{dom}\,\phi$ for all $k\geq0$ and the definition of $\partial_{\delta_k}P_1$ that
\begin{equation*}
\begin{aligned}
~ P_1(\bm{x}^k)&\geq P_1(\bm{x}^{k+1}) + \langle \bm{d}^{k+1}, \,\bm{x}^k - \bm{x}^{k+1} \rangle - \delta_k\\
&= P_1(\bm{x}^{k+1}) + \langle \Delta^k-\nabla f(\bm{x}^k)+\bm{\xi}^k - \gamma_k\big(\nabla \phi(\bm{x}^{k+1})-\nabla \phi(\bm{x}^{k})\big), \,\bm{x}^{k} - \bm{x}^{k+1}\rangle - \delta_k,
\end{aligned}
\end{equation*}
which implies that
\begin{equation*}
\begin{aligned}
P_1(\bm{x}^{k+1})
&\leq  P_1(\bm{x}^k) - \langle \nabla f(\bm{x}^k)-\bm{\xi}^k, \,\bm{x}^{k+1} - \bm{x}^k \rangle  \\
&\qquad + \gamma_k\langle \nabla \phi(\bm{x}^{k+1})-\nabla \phi(\bm{x}^{k}), \,\bm{x}^k-\bm{x}^{k+1}\rangle + \big|\langle\Delta^k, \,\bm{x}^{k+1}-\bm{x}^{k}\rangle\big| + \delta_k.
\end{aligned}
\end{equation*}
Moreover, note from the definition of $\mathcal{D}_{\phi}$ that
\begin{equation*}
\langle \,\nabla\phi(\bm{x}^{k+1}) - \nabla\phi(\bm{x}^{k}), \,\bm{x}^k-\bm{x}^{k+1} \,\rangle
=-\mathcal{D}_{\phi}(\bm{x}^k,\,\bm{x}^{k+1}) -\mathcal{D}_{\phi}(\bm{x}^{k+1},\,\bm{x}^k).
\end{equation*}
Combining above two relations, we have that
\begin{equation}\label{ineq1-iBPDCA2}
\begin{aligned}
P_1(\bm{x}^{k+1})
&\leq P_1(\bm{x}^k) - \langle \nabla f(\bm{x}^k)-\bm{\xi}^k, \,\bm{x}^{k+1} - \bm{x}^k \rangle \\
&\qquad - \gamma_k\big(\mathcal{D}_{\phi}(\bm{x}^k,\,\bm{x}^{k+1})
+ \mathcal{D}_{\phi}(\bm{x}^{k+1},\,\bm{x}^k)\big)
+ \big|\langle\Delta^k, \,\bm{x}^{k+1}-\bm{x}^{k}\rangle\big| + \delta_k.
\end{aligned}
\end{equation}
On the other hand, from the convexity of $P_2$ and $\bm{\xi}^k\in\partial P_2(\bm{x}^{k})$, we obtain that
\begin{equation}\label{ineq2-iBPDCA2}
P_2(\bm{x}^{k+1}) \geq P_2(\bm{x}^k) + \langle\bm{\xi}^k, \,\bm{x}^{k+1}-\bm{x}^{k}\rangle ~~ \Longleftrightarrow ~~
-P_2(\bm{x}^{k+1}) \leq -P_2(\bm{x}^k) - \langle\bm{\xi}^k, \,\bm{x}^{k+1}-\bm{x}^{k}\rangle.
\end{equation}
Moreover, since $(f,\phi)$ is ${L}$-smooth adaptable restricted on $\mathcal{X} \supseteq \mathrm{dom}\,P_1\cap\mathcal{Q}$ (by Assumption \ref{assumA}3) and $\bm{x}^{k},\,\bm{x}^{k+1}\in\mathrm{dom}\,P_1\,\cap\,\mathrm{int}\,\mathrm{dom}\,\phi\,\subseteq\,\mathrm{dom}\,P_1\cap\mathcal{Q}$ (by Assumption \ref{assumA}1), we obtain that
\begin{equation}\label{ineq3-iBPDCA2}
f(\bm{x}^{k+1}) \leq f(\bm{x}^k) + \langle \nabla f(\bm{x}^k), \,\bm{x}^{k+1}-\bm{x}^{k}\rangle + {L}\mathcal{D}_{\phi}(\bm{x}^{k+1},\,\bm{x}^{k}).
\end{equation}
Thus, summing \eqref{ineq1-iBPDCA2}, \eqref{ineq2-iBPDCA2} and \eqref{ineq3-iBPDCA2}, together with $\bm{x}^k\in\mathrm{int}\,\mathrm{dom}\,\phi\subseteq\mathcal{Q}$ for all $k\geq0$, we can obtain the desired result. This completes the proof.
\end{proof}

Based on the above approximate sufficient descent property, we further have the following results for the iBPDCA with (SC1) and (SC2), respectively.

\begin{proposition}[\textbf{Properties of the iBPDCA with (SC1)}]\label{pro-bdSC1}
Suppose that Assumption \ref{assumA} holds, $L<\gamma_{\min}\leq\gamma_k\leq\gamma_{\max}<\infty$ and $0\leq\sigma<(\gamma_{\min}-L)/\gamma_{\min}$ for all $k\geq0$. Let $\{\bm{x}^k\}$ be the sequence generated by the iBPDCA with (SC1) in Algorithm \ref{alg-iBPDCA}. The following statements hold.
\begin{itemize}[leftmargin=8mm]
\item[{\rm (i)}] The sequence $\{F(\bm{x}^{k})\}_{k=0}^{\infty}$ is non-increasing and $\zeta:=\lim\limits_{k\to\infty}F(\bm{x}^{k})$ exists.
\item[{\rm (ii)}] The sequence $\{\bm{x}^k\}$ is bounded.
\item[{\rm (iii)}] $\mathcal{D}_{\phi}(\bm{x}^{k+1},\,\bm{x}^{k})\to0$ and  $\mathcal{D}_{\phi}(\bm{x}^k,\,\bm{x}^{k+1})\to0$.
\item[{\rm (iv)}] The sequence $\left\{ \|\Delta^k\|^2+\big|\langle\Delta^k, \,\bm{x}^{k+1}-\bm{x}^{k}\rangle\big| + \delta_k\right\}$ is summable. Hence, $\|\Delta^k\|\to0$, $\big|\langle\Delta^k, \,\bm{x}^{k+1}-\bm{x}^{k}\rangle\big|\to0$, and $\delta_k\to0$.
\item[{\rm (v)}] It holds that
    \begin{equation*}
    \begin{aligned}
    \min\limits_{0\leq i \leq k-1}
    \big{\{}\mathcal{D}_{\phi}(\bm{x}^{i+1},\,\bm{x}^{i})\big{\}} &\leq \frac{F(\bm{x}^{0})-\zeta}{(1-\sigma)\gamma_{\min}-L}\cdot\frac{1}{k}, \\[3pt]
    \min\limits_{0\leq i \leq k-1}
    \big{\{}\mathcal{D}_{\phi}(\bm{x}^{i},\,\bm{x}^{i+1})\big{\}} &\leq \frac{F(\bm{x}^{0})-\zeta}{\gamma_{\min}}\cdot\frac{1}{k}.
    \end{aligned}
    \end{equation*}
\end{itemize}
\end{proposition}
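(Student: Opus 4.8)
The plan is to derive everything from the approximate sufficient descent inequality \eqref{suffdes-iBPDCA} in Lemma \ref{lem-suffdes} by absorbing the error terms using the stopping criterion (SC1). The first step is to bound the error contribution. By (SC1) we have $|\langle\Delta^k,\,\bm{x}^{k+1}-\bm{x}^k\rangle| + \delta_k \le \|\Delta^k\|^2 + |\langle\Delta^k,\,\bm{x}^{k+1}-\bm{x}^k\rangle| + \delta_k \le \sigma\gamma_k\mathcal{D}_{\phi}(\bm{x}^{k+1},\,\bm{x}^k)$. Substituting this into \eqref{suffdes-iBPDCA} and discarding the nonnegative term $\gamma_k\mathcal{D}_{\phi}(\bm{x}^k,\,\bm{x}^{k+1})$ on the right, I obtain
\[
F(\bm{x}^{k+1}) - F(\bm{x}^k) \le -(\gamma_k - L - \sigma\gamma_k)\mathcal{D}_{\phi}(\bm{x}^{k+1},\,\bm{x}^k) - \gamma_k\mathcal{D}_{\phi}(\bm{x}^k,\,\bm{x}^{k+1}) + \sigma\gamma_k\mathcal{D}_{\phi}(\bm{x}^{k+1},\,\bm{x}^k),
\]
wait — more cleanly, keeping $\gamma_k\mathcal{D}_{\phi}(\bm{x}^k,\,\bm{x}^{k+1})$ intact,
\[
F(\bm{x}^{k+1}) - F(\bm{x}^k) \le -\big((1-\sigma)\gamma_k - L\big)\mathcal{D}_{\phi}(\bm{x}^{k+1},\,\bm{x}^k) - \gamma_k\mathcal{D}_{\phi}(\bm{x}^k,\,\bm{x}^{k+1}).
\]
Since $\sigma < (\gamma_{\min}-L)/\gamma_{\min}$ gives $(1-\sigma)\gamma_k \ge (1-\sigma)\gamma_{\min} > L$, the coefficient $(1-\sigma)\gamma_k - L \ge (1-\sigma)\gamma_{\min} - L > 0$, and $\gamma_k \ge \gamma_{\min} > 0$. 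This is the master inequality from which all five items follow.

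For item (i): the right-hand side above is $\le 0$, so $\{F(\bm{x}^k)\}$ is non-increasing; it is bounded below because $F$ is level-bounded (Assumption \ref{assumA}4), hence $\zeta := \lim_{k\to\infty} F(\bm{x}^k)$ exists. For item (ii): $F(\bm{x}^k) \le F(\bm{x}^0)$ for all $k$, so the whole sequence lies in a level set of $F$, which is bounded by Assumption \ref{assumA}4. For items (iii)–(iv): telescoping the master inequality from $0$ to $k-1$ gives
\[
\big((1-\sigma)\gamma_{\min} - L\big)\sum_{i=0}^{k-1}\mathcal{D}_{\phi}(\bm{x}^{i+1},\,\bm{x}^i) + \gamma_{\min}\sum_{i=0}^{k-1}\mathcal{D}_{\phi}(\bm{x}^i,\,\bm{x}^{i+1}) \le F(\bm{x}^0) - F(\bm{x}^k) \le F(\bm{x}^0) - \zeta < \infty,
\]
so both series $\sum\mathcal{D}_{\phi}(\bm{x}^{k+1},\,\bm{x}^k)$ and $\sum\mathcal{D}_{\phi}(\bm{x}^k,\,\bm{x}^{k+1})$ converge, forcing their terms to zero (item (iii)); then (SC1) bounds $\|\Delta^k\|^2 + |\langle\Delta^k,\,\bm{x}^{k+1}-\bm{x}^k\rangle| + \delta_k$ by $\sigma\gamma_{\max}\mathcal{D}_{\phi}(\bm{x}^{k+1},\,\bm{x}^k)$, which is summable, giving item (iv) and the stated limits. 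For item (v): from the telescoped inequality, $\big((1-\sigma)\gamma_{\min}-L\big)\,k\cdot\min_{0\le i\le k-1}\mathcal{D}_{\phi}(\bm{x}^{i+1},\,\bm{x}^i) \le \sum_{i=0}^{k-1}\mathcal{D}_{\phi}(\bm{x}^{i+1},\,\bm{x}^i) \le F(\bm{x}^0)-\zeta$, and similarly $\gamma_{\min}\,k\cdot\min_{0\le i\le k-1}\mathcal{D}_{\phi}(\bm{x}^i,\,\bm{x}^{i+1}) \le F(\bm{x}^0)-\zeta$; dividing by $k$ yields the two rates.

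The only genuinely delicate point is the bookkeeping in combining the error terms with the descent inequality: one must be careful to keep the coefficient $(1-\sigma)\gamma_k - L$ strictly positive using the prescribed range of $\sigma$, and to retain (rather than discard) the $\gamma_k\mathcal{D}_{\phi}(\bm{x}^k,\,\bm{x}^{k+1})$ term so that item (iii) and the second rate in (v) come out. Everything else is a routine telescoping argument together with the level-boundedness hypothesis; I anticipate no substantive obstacle beyond this careful accounting.
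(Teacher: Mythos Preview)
Your proposal is correct and follows essentially the same route as the paper: combine Lemma \ref{lem-suffdes} with (SC1) to obtain the master inequality $F(\bm{x}^{k+1}) - F(\bm{x}^k) \le -\big((1-\sigma)\gamma_k - L\big)\mathcal{D}_{\phi}(\bm{x}^{k+1},\bm{x}^k) - \gamma_k\mathcal{D}_{\phi}(\bm{x}^k,\bm{x}^{k+1})$, then deduce (i)--(v) by monotonicity, level-boundedness, telescoping, and the min-over-sum trick. The only slip is a missing factor $\big((1-\sigma)\gamma_{\min}-L\big)$ on the middle term in your chain for item (v), but the intended inequality and conclusion are correct.
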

\begin{proof}
\textit{Statement (i)}. It follows from \eqref{suffdes-iBPDCA} and (SC1) that, for any $k\geq1$,
\begin{equation}\label{suffdes-C1}
F(\bm{x}^{k+1}) - F(\bm{x}^{k})
\leq -\big((1-\sigma)\gamma_k-L\big)\mathcal{D}_{\phi}(\bm{x}^{k+1},\,\bm{x}^{k})
- \gamma_k\mathcal{D}_{\phi}(\bm{x}^{k},\,\bm{x}^{k+1})
\leq 0,
\end{equation}
where the last inequality follows from $L<\gamma_{\min}\leq\gamma_k\leq\gamma_{\max}<\infty$ and $0\leq\sigma<1-L/\gamma_{\min}\leq1-L/\gamma_k$ for all $k\geq0$. Thus, $\{F(\bm{x}^{k})\}_{k=0}^{\infty}$ is non-increasing. Moreover, since $F$ is level-bounded (by Assumption \ref{assumA}4), we have that $F^* := \mathrm{\inf}\big\{F(\bm{x})\,|\,\bm{x}\in\mathcal{Q}\big\}>-\infty$ and hence the sequence $\big\{F(\bm{x}^{k})\big\}_{k=0}^{\infty}$ is bounded from below. Then, we can conclude that $\zeta:=\lim\limits_{k\to\infty}F(\bm{x}^{k})$ exists.

\textit{Statement (ii)}. It is easy to see from \eqref{suffdes-C1} that $F(\bm{x}^k) \leq F(\bm{x}^0)$ for all $k\geq1$. Since $F$ is level-bounded (by Assumption \ref{assumA}4), the sequence $\{\bm{x}^k\}$ is then bounded.

\textit{Statement (iii)}. From \eqref{suffdes-C1}, $0\leq\sigma<1-L/\gamma_{\min}$, and $L<\gamma_{\min} \leq\gamma_k\leq\gamma_{\max}<\infty$ for all $k \geq 0$, we see that
\begin{equation*}
\begin{aligned}
&~~\big((1-\sigma)\gamma_{\min}-L\big)\mathcal{D}_{\phi}(\bm{x}^{k+1},\,\bm{x}^{k})
+ \gamma_{\min}\mathcal{D}_{\phi}(\bm{x}^{k},\,\bm{x}^{k+1}) \\
&\leq \big((1-\sigma)\gamma_k-L\big)\mathcal{D}_{\phi}(\bm{x}^{k+1},\,\bm{x}^{k})
+ \gamma_k\mathcal{D}_{\phi}(\bm{x}^{k},\,\bm{x}^{k+1})
\leq F(\bm{x}^{k}) - F(\bm{x}^{k+1}).
\end{aligned}
\end{equation*}
Summing the above inequality from $k=0$ to $k=K$, we have
\begin{equation}\label{sumbdsubC1}
\big((1-\sigma)\gamma_{\min}-L\big){\textstyle\sum^{K}_{k=0}}\,
\mathcal{D}_{\phi}(\bm{x}^{k+1},\,\bm{x}^{k})
+ \gamma_{\min}{\textstyle\sum^{K}_{k=0}}\,
\mathcal{D}_{\phi}(\bm{x}^k,\,\bm{x}^{k+1})
\leq F(\bm{x}^{0}) - F(\bm{x}^{K+1}).
\end{equation}

Then, passing to the limit in the above relation induces that
\begin{equation}\label{summaC1-p}
\begin{aligned}
&~~\big((1-\sigma)\gamma_{\min}-L\big){\textstyle\sum^{\infty}_{k=0}}\,
\mathcal{D}_{\phi}(\bm{x}^{k+1},\,\bm{x}^{k})
+ \gamma_{\min}{\textstyle\sum^{\infty}_{k=0}}\,
\mathcal{D}_{\phi}(\bm{x}^k,\,\bm{x}^{k+1}) \\
&\leq F(\bm{x}^{0}) - \lim_{K\to\infty}F(\bm{x}^{K+1})
= F(\bm{x}^{0}) - \zeta
< \infty,
\end{aligned}
\end{equation}
where the equality follows from statement (i). This together with $0\leq\sigma<1-L/\gamma_{\min}$ and $\gamma_{\min}>0$ implies that ${\textstyle\sum^{\infty}_{k=0}}\,
\mathcal{D}_{\phi}(\bm{x}^{k+1},\,\bm{x}^{k})<\infty$ and ${\textstyle\sum^{\infty}_{k=0}}\,
\mathcal{D}_{\phi}(\bm{x}^k,\,\bm{x}^{k+1})<\infty$. Thus, we have that $\mathcal{D}_{\phi}(\bm{x}^{k+1},\,\bm{x}^{k}) \to 0$ and $\mathcal{D}_{\phi}(\bm{x}^k,\,\bm{x}^{k+1})\to0$.

\textit{Statement (iv)}. We see from (\ref{summaC1-p}), (SC1), and $L<\gamma_{\min}\leq\gamma_k\leq\gamma_{\max}<\infty$ that
\begin{equation*}
\sum_{k=0}^{\infty}\left[\|\Delta^k\|^2 + \big|\langle\Delta^k, \,\bm{x}^{k+1}-\bm{x}^{k}\rangle\big| + \delta_k\right]
\leq\sigma\gamma_{\max}\sum_{k=0}^{\infty}\mathcal{D}_{\phi}(\bm{x}^{k+1},\,\bm{x}^{k})
<\infty,
\end{equation*}
which readily implies that $\|\Delta^k\|\to0$, $\big|\langle\Delta^k, \,\bm{x}^{k+1}-\bm{x}^{k}\rangle\big|\to0$ and $\delta_k\to0$.

\textit{Statement (v)}. From \eqref{sumbdsubC1} and \eqref{summaC1-p}, we further have that
\begin{equation*}
\begin{aligned}
&~~k\big((1-\sigma)\gamma_{\min}-L\big)\min\limits_{0\leq i \leq k-1}\big{\{}\mathcal{D}_{\phi}(\bm{x}^{i+1},\,\bm{x}^{i})\big{\}}\\
&\leq \big((1-\sigma)\gamma_{\min}-L\big){\textstyle\sum^{k-1}_{i=0}}\,
\mathcal{D}_{\phi}(\bm{x}^{i+1},\,\bm{x}^{i})
\leq F(\bm{x}^{0}) - \zeta,
\end{aligned}
\end{equation*}
and
\begin{equation*}
k\gamma_{\min}\min\limits_{0 \leq i \leq k-1}
\big{\{}\mathcal{D}_{\phi}(\bm{x}^{i},\,\bm{x}^{i+1})\big{\}}
\leq \gamma_{\min}{\textstyle\sum^{k-1}_{i=0}}\,
\mathcal{D}_{\phi}(\bm{x}^{i},\,\bm{x}^{i+1})
\leq F(\bm{x}^{0})-\zeta.
\end{equation*}
Dividing the above relations by $k\big((1-\sigma)\gamma_{\min}-L\big)$ and $k\gamma_{\min}$ respectively, we obtain the desired results and complete the proof.
\end{proof}

\begin{proposition}[\textbf{Properties of the iBPDCA with (SC2)}]\label{pro-bdSC2}
Suppose that Assumption \ref{assumA} holds, $L<\gamma_{\min}\leq\gamma_k\leq\gamma_{\max}<\infty$ and $0\leq\sigma<(\gamma_{\min}-L)/\gamma_{\max}$ for all $k\geq0$. Let $\{\bm{x}^k\}$ be the sequence generated by the iBPDCA with (SC2) in Algorithm \ref{alg-iBPDCA}. The following statements hold.
\begin{itemize}[leftmargin=8mm]
\item[{\rm (i)}] The sequence $\big\{F(\bm{x}^{k})+\sigma\gamma_{\max}\mathcal{D}_{\phi}(\bm{x}^{k},\,\bm{x}^{k-1})\big\}_{k=0}^{\infty}$ is non-increasing, and $\widetilde{\zeta}:=\lim\limits_{k\to\infty}F(\bm{x}^{k})+\sigma\gamma_{\max}\mathcal{D}_{\phi}(\bm{x}^{k},\,\bm{x}^{k-1})$ exists.
\item[{\rm (ii)}] The sequence $\{\bm{x}^k\}$ is bounded.
\item[{\rm (iii)}] $\mathcal{D}_{\phi}(\bm{x}^{k+1},\,\bm{x}^{k})\to0$ and  $\mathcal{D}_{\phi}(\bm{x}^k,\,\bm{x}^{k+1})\to0$.
\item[{\rm (iv)}] The sequence $\left\{\|\Delta^k\|^2+\big|\langle\Delta^k, \,\bm{x}^{k+1}-\bm{x}^{k}\rangle\big| + \delta_k\right\}$ is summable. Hence, $\|\Delta^k\|\to0$, $\big|\langle\Delta^k, \,\bm{x}^{k+1}-\bm{x}^{k}\rangle\big|\to0$, and $\delta_k\to0$.
\item[{\rm (v)}] It holds that
                   \begin{equation*}
                   \begin{aligned}
                   \min\limits_{0\leq i \leq k-1}
                   \big{\{}\mathcal{D}_{\phi}(\bm{x}^{i+1},\,\bm{x}^{i})\big{\}}
                   &\leq \frac{F(\bm{x}^{0})+\sigma\gamma_{\max}\mathcal{D}_{\phi}(\bm{x}^{0},\,\bm{x}^{-1})-\widetilde{\zeta}}{\gamma_{\min}-L-\sigma\gamma_{\max}}\cdot\frac{1}{k}, \\[3pt]
                   \min\limits_{0\leq i \leq k-1}
                   \big{\{}\mathcal{D}_{\phi}(\bm{x}^{i},\,\bm{x}^{i+1})\big{\}}
                   &\leq \frac{F(\bm{x}^{0})+\sigma\gamma_{\max}\mathcal{D}_{\phi}(\bm{x}^{0},\,\bm{x}^{-1})-\widetilde{\zeta}}{\gamma_{\min}}\cdot\frac{1}{k}.
                   \end{aligned}
                   \end{equation*}
\end{itemize}
\end{proposition}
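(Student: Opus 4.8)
The plan is to mirror the proof of Proposition~\ref{pro-bdSC1}, but with the Lyapunov function $\widehat{F}(\bm{x}^k):=F(\bm{x}^{k})+\sigma\gamma_{\max}\mathcal{D}_{\phi}(\bm{x}^{k},\,\bm{x}^{k-1})$ replacing $F(\bm{x}^k)$ in order to absorb the ``lagged'' Bregman term appearing in (SC2). First I would combine the approximate sufficient descent inequality \eqref{suffdes-iBPDCA} of Lemma~\ref{lem-suffdes} with the stopping criterion (SC2). Since (SC2) bounds $|\langle\Delta^k,\bm{x}^{k+1}-\bm{x}^{k}\rangle|+\delta_k$ (as well as $\|\Delta^k\|^2$) by $\sigma\gamma_k\mathcal{D}_{\phi}(\bm{x}^{k},\bm{x}^{k-1})\le\sigma\gamma_{\max}\mathcal{D}_{\phi}(\bm{x}^{k},\bm{x}^{k-1})$, substituting into \eqref{suffdes-iBPDCA} gives
\begin{equation*}
F(\bm{x}^{k+1})-F(\bm{x}^{k})\leq-(\gamma_k-L)\mathcal{D}_{\phi}(\bm{x}^{k+1},\bm{x}^{k})-\gamma_k\mathcal{D}_{\phi}(\bm{x}^{k},\bm{x}^{k+1})+\sigma\gamma_{\max}\mathcal{D}_{\phi}(\bm{x}^{k},\bm{x}^{k-1}).
\end{equation*}
Rearranging, and using $\gamma_k-L\ge\gamma_{\min}-L$ together with $\sigma\gamma_{\max}<\gamma_{\min}-L$, yields
\begin{equation*}
\widehat{F}(\bm{x}^{k+1})-\widehat{F}(\bm{x}^{k})\leq-(\gamma_{\min}-L-\sigma\gamma_{\max})\mathcal{D}_{\phi}(\bm{x}^{k+1},\bm{x}^{k})-\gamma_k\mathcal{D}_{\phi}(\bm{x}^{k},\bm{x}^{k+1})\leq0,
\end{equation*}
which is the key telescoping inequality. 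Statement (i) then follows exactly as before: $\{\widehat{F}(\bm{x}^k)\}$ is non-increasing, it is bounded below because $F$ is level-bounded (hence $\inf F>-\infty$) and $\mathcal{D}_{\phi}\ge0$, so the limit $\widetilde{\zeta}$ exists.

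For statement (ii), from $\widehat{F}(\bm{x}^k)\le\widehat{F}(\bm{x}^0)$ and $\mathcal{D}_{\phi}\ge0$ we get $F(\bm{x}^k)\le\widehat{F}(\bm{x}^0)$ for all $k\ge1$, so level-boundedness of $F$ (Assumption~\ref{assumA}4) gives boundedness of $\{\bm{x}^k\}$. For (iii), I would sum the key telescoping inequality from $k=0$ to $K$: the right-hand side telescopes to $\widehat{F}(\bm{x}^{0})-\widehat{F}(\bm{x}^{K+1})\le\widehat{F}(\bm{x}^{0})-\widetilde{\zeta}<\infty$, so
\begin{equation*}
(\gamma_{\min}-L-\sigma\gamma_{\max}){\textstyle\sum_{k=0}^{\infty}}\mathcal{D}_{\phi}(\bm{x}^{k+1},\bm{x}^{k})+\gamma_{\min}{\textstyle\sum_{k=0}^{\infty}}\mathcal{D}_{\phi}(\bm{x}^{k},\bm{x}^{k+1})<\infty,
\end{equation*}
and since both coefficients are positive (here the hypothesis $\sigma<(\gamma_{\min}-L)/\gamma_{\max}$ is exactly what is needed), both series converge, giving $\mathcal{D}_{\phi}(\bm{x}^{k+1},\bm{x}^{k})\to0$ and $\mathcal{D}_{\phi}(\bm{x}^{k},\bm{x}^{k+1})\to0$. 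Statement (iv) follows from (SC2): $\|\Delta^k\|^2+|\langle\Delta^k,\bm{x}^{k+1}-\bm{x}^{k}\rangle|+\delta_k\le\sigma\gamma_{\max}\mathcal{D}_{\phi}(\bm{x}^{k},\bm{x}^{k-1})$, and the right-hand side is summable by the just-established convergence of $\sum_k\mathcal{D}_{\phi}(\bm{x}^{k+1},\bm{x}^{k})$ (re-indexed), whence $\|\Delta^k\|\to0$, $|\langle\Delta^k,\bm{x}^{k+1}-\bm{x}^{k}\rangle|\to0$ and $\delta_k\to0$.

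Finally, for statement (v), I would take the partial-sum form of the telescoping inequality, namely $(\gamma_{\min}-L-\sigma\gamma_{\max})\sum_{i=0}^{k-1}\mathcal{D}_{\phi}(\bm{x}^{i+1},\bm{x}^{i})\le\widehat{F}(\bm{x}^0)-\widehat{F}(\bm{x}^k)\le\widehat{F}(\bm{x}^0)-\widetilde{\zeta}=F(\bm{x}^{0})+\sigma\gamma_{\max}\mathcal{D}_{\phi}(\bm{x}^{0},\bm{x}^{-1})-\widetilde{\zeta}$, and likewise with $\gamma_{\min}\sum_{i=0}^{k-1}\mathcal{D}_{\phi}(\bm{x}^{i},\bm{x}^{i+1})$ on the left; bounding the sum below by $k$ times the minimum term and dividing by $k(\gamma_{\min}-L-\sigma\gamma_{\max})$ (resp.\ $k\gamma_{\min}$) gives the two claimed $O(1/k)$ rates. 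I do not anticipate a serious obstacle: the only genuinely new ingredient compared to Proposition~\ref{pro-bdSC1} is the correct choice of Lyapunov function and the bookkeeping needed so that the lagged term $\sigma\gamma_{\max}\mathcal{D}_{\phi}(\bm{x}^{k},\bm{x}^{k-1})$ in (SC2) is dominated after telescoping; the main point to be careful about is that $\sigma\gamma_{\max}$ (not $\sigma\gamma_k$) must be used uniformly so the descent of $\widehat{F}$ is genuine, which forces the narrower parameter range $\sigma<(\gamma_{\min}-L)/\gamma_{\max}$ stated in the hypothesis.
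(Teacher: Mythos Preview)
Your proposal is correct and follows essentially the same route as the paper's proof: both introduce the Lyapunov function $F(\bm{x}^{k})+\sigma\gamma_{\max}\mathcal{D}_{\phi}(\bm{x}^{k},\bm{x}^{k-1})$, combine \eqref{suffdes-iBPDCA} with (SC2) to obtain the telescoping inequality with residual coefficient $\gamma_{\min}-L-\sigma\gamma_{\max}>0$, and then derive (i)--(v) exactly as you outline. The only cosmetic difference is that the paper keeps $\sigma\gamma_k$ for one intermediate step before bounding it by $\sigma\gamma_{\max}$, whereas you pass to $\sigma\gamma_{\max}$ immediately; this changes nothing.
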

\begin{proof}
\textit{Statement (i)}. It follows from \eqref{suffdes-iBPDCA} and (SC2) that, for any $k\geq1$,
\begin{equation}\label{suffdes-C2p}
F(\bm{x}^{k+1}) - F(\bm{x}^{k})
\leq -\big(\gamma_k-L\big)\mathcal{D}_{\phi}(\bm{x}^{k+1},\,\bm{x}^{k})
- \gamma_k\mathcal{D}_{\phi}(\bm{x}^{k},\,\bm{x}^{k+1}) + \sigma\gamma_k\mathcal{D}_{\phi}(\bm{x}^{k},\,\bm{x}^{k-1}),
\end{equation}
which implies that
\begin{equation}\label{suffdes-C2}
\begin{aligned}
&~~F(\bm{x}^{k+1})+ \sigma\gamma_{\max}\mathcal{D}_{\phi}(\bm{x}^{k+1},\,\bm{x}^{k})\\
& \leq F(\bm{x}^{k})+\sigma\gamma_k\mathcal{D}_{\phi}(\bm{x}^{k},\,\bm{x}^{k-1})-(\gamma_k-L-\sigma\gamma_{\max})\mathcal{D}_{\phi}(\bm{x}^{k+1},\,\bm{x}^{k})
- \gamma_k\mathcal{D}_{\phi}(\bm{x}^{k},\,\bm{x}^{k+1})\\
& \leq F(\bm{x}^{k})+\sigma\gamma_{\max}\mathcal{D}_{\phi}(\bm{x}^{k},\,\bm{x}^{k-1}),
\end{aligned}
\end{equation}
where the last inequality follows from $L<\gamma_{\min}\leq\gamma_k\leq\gamma_{\max}<\infty$ and $0\leq\sigma<(\gamma_{\min}-L)/\gamma_{\max}\leq(\gamma_{k}-L)/\gamma_{\max}$ for all $k\geq0$. Thus, the sequence $\big\{F(\bm{x}^{k})+\sigma\gamma_{\max}\mathcal{D}_{\phi}(\bm{x}^{k},\,\bm{x}^{k-1})\big\}_{k=0}^{\infty}$ is non-increasing. Moreover, since $F$ is level-bounded (by Assumption \ref{assumA}4), we have that $F^* := \mathrm{\inf}\big\{F(\bm{x})\,|\,\bm{x}\in\mathcal{Q}\big\}>-\infty$ and hence the sequence $\big\{F(\bm{x}^{k})\big\}_{k=0}^{\infty}$ is bounded from below. This together with the nonnegativity of $\mathcal{D}_{\phi}(\bm{x}^{k},\,\bm{x}^{k-1})$ implies that  $\big\{F(\bm{x}^{k})+\sigma\gamma_{\max}\mathcal{D}_{\phi}(\bm{x}^{k},\,\bm{x}^{k-1})\big\}_{k=0}^{\infty}$ is also bounded from below. Then, we can conclude that $\widetilde{\zeta}:=\lim\limits_{k\to\infty}F(\bm{x}^{k})+\sigma\gamma_{\max}\mathcal{D}_{\phi}(\bm{x}^{k},\,\bm{x}^{k-1})$ exists.

\textit{Statement (ii)}. We obtain from \eqref{suffdes-C2} that, for any $k\geq0$,
\begin{equation*}
F(\bm{x}^{k})\leq F(\bm{x}^{k})+ \sigma\gamma_{\max}\mathcal{D}_{\phi}(\bm{x}^{k},\,\bm{x}^{k-1})\leq F(\bm{x}^{0})+\sigma\gamma_{\max}\mathcal{D}_{\phi}(\bm{x}^{0},\,\bm{x}^{-1}).
\end{equation*}
Since $F$ is level-bounded (by Assumption \ref{assumA}4), the sequence $\{\bm{x}^k\}$ is then bounded.

\textit{Statement (iii)}. Then, from \eqref{suffdes-C2}, $L<\gamma_{\min}\leq\gamma_k\leq\gamma_{\max}<\infty$ and $0\leq\sigma<(\gamma_{\min}-L)/\gamma_{\max}\leq(\gamma_{k}-L)/\gamma_{\max}$ for all $k \geq 0$, we see that
\begin{equation*}
\begin{aligned}
&~~\big(\gamma_{\min}-L-\sigma\gamma_{\max}\big)\mathcal{D}_{\phi}(\bm{x}^{k+1},\,\bm{x}^{k})
+ \gamma_{\min}\mathcal{D}_{\phi}(\bm{x}^{k},\,\bm{x}^{k+1}) \\
&\leq \big({\gamma}_k-L-\sigma\gamma_{\max}\big)\mathcal{D}_{\phi}(\bm{x}^{k+1},\,\bm{x}^{k})
+ \gamma_k\mathcal{D}_{\phi}(\bm{x}^{k},\,\bm{x}^{k+1}) \\
&\leq \left(F(\bm{x}^{k})+\sigma\gamma_{\max}\mathcal{D}_{\phi}(\bm{x}^{k},\,\bm{x}^{k-1})\right)
- \left(F(\bm{x}^{k+1})+\sigma\gamma_{\max}\mathcal{D}_{\phi}(\bm{x}^{k+1},\,\bm{x}^{k})\right).
\end{aligned}
\end{equation*}
Summing the above inequality from $k=0$ to $k=K$, we have
\begin{equation}\label{sumbdsubC2}
\begin{aligned}
~~&\big(\gamma_{\min}-L-\sigma\gamma_{\max}\big){\textstyle\sum^{K}_{k=0}}\mathcal{D}_{\phi}(\bm{x}^{k+1},\,\bm{x}^{k})+\gamma_{\min}{\textstyle\sum^{K}_{k=0}}\,\mathcal{D}_{\phi}(\bm{x}^k,\,\bm{x}^{k+1})\\
& \leq \left(F(\bm{x}^{0})+\sigma\gamma_{\max}\mathcal{D}_{\phi}(\bm{x}^{0},\,\bm{x}^{-1})\right)
-
\left(F(\bm{x}^{K+1})+\sigma\gamma_{\max}\mathcal{D}_{\phi}(\bm{x}^{K+1},\,\bm{x}^{K})\right).
\end{aligned}
\end{equation}
Then, passing to the limit in the above relation induces that
\begin{equation}\label{summaC2-p}
\begin{aligned}
~~&\big(\gamma_{\min}-L-\sigma\gamma_{\max}\big){\textstyle\sum^{\infty}_{k=0}}\mathcal{D}_{\phi}(\bm{x}^{k+1},\,\bm{x}^{k})+\gamma_{\min}{\textstyle\sum^{\infty}_{k=0}}\,\mathcal{D}_{\phi}(\bm{x}^k,\,\bm{x}^{k+1})\\
&\leq F(\bm{x}^{0})+\sigma\gamma_{\max}\mathcal{D}_{\phi}(\bm{x}^{0},\,\bm{x}^{-1})
- \lim\limits_{K\to\infty}\left\{F(\bm{x}^{K+1})
+ \sigma\gamma_{\max}\mathcal{D}_{\phi}(\bm{x}^{K+1},\,\bm{x}^{K})\right\} \\
& = F(\bm{x}^{0})+\sigma\gamma_{\max}\mathcal{D}_{\phi}(\bm{x}^{0},\,\bm{x}^{-1})-
\widetilde{\zeta}
<\infty,
\end{aligned}
\end{equation}
where the equality follows from statement (i). This together with $0\leq\sigma<(\gamma_{\min}-L)/\gamma_{\max}$ and $\gamma_{\min}>0$ implies that ${\textstyle\sum^{\infty}_{k=0}}\mathcal{D}_{\phi}(\bm{x}^{k+1},\,\bm{x}^{k})<\infty$ and ${\textstyle\sum^{\infty}_{k=0}}\,\mathcal{D}_{\phi}(\bm{x}^k,\,\bm{x}^{k+1})<\infty$. Thus, we have that $\mathcal{D}_{\phi}(\bm{x}^{k+1},\,\bm{x}^{k}) \to 0$ and $\mathcal{D}_{\phi}(\bm{x}^k,\,\bm{x}^{k+1})\to0$.

\textit{Statement (iv)}. From \eqref{summaC2-p}, (SC2), and $L<\gamma_{\min}\leq\gamma_k\leq\gamma_{\max}<\infty$ for all $k \geq 0$, we have that
\begin{equation*}
\sum_{k=0}^{\infty}\left[\|\Delta^k\|^2 + \big|\langle\Delta^k, \,\bm{x}^{k+1}-\bm{x}^{k}\rangle\big| + \delta_k\right]
\leq\sigma\gamma_{\max}\sum_{k=0}^{\infty}\mathcal{D}_{\phi}(\bm{x}^{k},\,\bm{x}^{k-1})
<\infty,
\end{equation*}
which readily implies that $\|\Delta^k\|\to0$, $\big|\langle\Delta^k, \,\bm{x}^{k+1}-\bm{x}^{k}\rangle\big|\to0$ and $\delta_k\to0$.

\textit{Statement (v)}. From \eqref{sumbdsubC2} and \eqref{summaC2-p}, we further have that
\begin{equation*}
\begin{aligned}
&~~k(\gamma_{\min}-L-\sigma\gamma_{\max})\min\limits_{0\leq i\leq k-1}\left\{\mathcal{D}_{\phi}(\bm{x}^{i+1},\,\bm{x}^{i})\right\}\\
&\leq (\gamma_{\min}-L-\sigma\gamma_{\max}){\textstyle\sum^{k-1}_{i=0}}\,
\mathcal{D}_{\phi}(\bm{x}^{i+1},\,\bm{x}^{i})
\leq F(\bm{x}^{0})+\sigma\gamma_{\max}\mathcal{D}_{\phi}(\bm{x}^{0},\,\bm{x}^{-1}) - \widetilde{\zeta},
\end{aligned}
\end{equation*}
and
\begin{equation*}
k\gamma_{\min}\min\limits_{0 \leq i \leq k-1}
\left\{\mathcal{D}_{\phi}(\bm{x}^{i},\,\bm{x}^{i+1})\right\}
\leq \gamma_{\min}{\textstyle\sum^{k-1}_{i=0}}\,
\mathcal{D}_{\phi}(\bm{x}^{i},\,\bm{x}^{i+1})
\leq F(\bm{x}^{0})+\sigma\gamma_{\max}\mathcal{D}_{\phi}(\bm{x}^{0},\,\bm{x}^{-1})-\widetilde{\zeta}.
\end{equation*}
Dividing the above relations by $k(\gamma_{\min}-L-\sigma\gamma_{\max})$ and $k\gamma_{\min}$ respectively, we obtain the desired results and complete the proof.
\end{proof}

\section{Convergence analysis}\label{sec-conv}

In this section, we attempt to establish the convergence of our iBPDCA in Algorithm \ref{alg-iBPDCA}. To this end, we need to make additional assumptions as follows.

\begin{assumption}\label{assumB}
$f$ is continuously differentiable on an open set $\mathcal{N}_f$ that contains $\mathrm{dom}\,P_1\cap\mathcal{Q}$.
\end{assumption}

Assumption \ref{assumB} says that $f$ is continuously differentiable at any point in $\mathrm{dom}\,P_1\cap\mathcal{Q}$. Under Assumptions \ref{assumA} and \ref{assumB}, it is routine to show that any local minimizer $\widehat{\bm{x}}$ of problem \eqref{DCpro} satisfies the following relations:
\begin{equation}\label{stationary-point}
\begin{aligned}
0 \in \partial F(\widehat{\bm{x}})
= \partial\left(\iota_{\mathcal{Q}}+P_1-P_2\right)(\widehat{\bm{x}})
+ \nabla f(\widehat{\bm{x}})
\subseteq \partial\left(\iota_{\mathcal{Q}}+P_1\right)(\widehat{\bm{x}})
- \partial P_2(\widehat{\bm{x}}) + \nabla f(\widehat{\bm{x}}),
\end{aligned}
\end{equation}
where the first inclusion follows from the generalized Fermat's rule \cite[Theorem~10.1]{rw1998variational}, the equality follows from \cite[Exercise~8.8(c)]{rw1998variational}, and the last inclusion follows from \cite[Corollary~3.4]{mny2006frechet}. Thus, in this paper, we follow \cite[Definition 4.1]{wcp2018proximal} to say that $\bm{x}^*$ is a \textit{stationary point} of problem \eqref{DCpro} if $\bm{x}^*$ satisfies
\begin{equation}\label{defsta}
0 \in \partial\left(\iota_{\mathcal{Q}}+P_1\right)(\bm{x}^*)
- \partial P_2(\bm{x}^*) + \nabla f(\bm{x}^*).
\end{equation}
Moreover, if $P_2$ is continuously differentiable near $\bm{x}^*$, the last inclusion in \eqref{stationary-point} can be shown to hold as an equality at $\bm{x}^*$. In this case, we follow \cite[Section 3]{cps2018composite} to say that $\bm{x}^*$ is \red{an} \textit{$\ell$(imiting) stationary point} of problem \eqref{DCpro} if $\bm{x}^*$ satisfies
\begin{equation}\label{deflimsta}
0 \in \partial\left(\iota_{\mathcal{Q}}+P_1\right)(\bm{x}^*)
- \nabla P_2(\bm{x}^*) + \nabla f(\bm{x}^*).
\end{equation}
Note that, an $\ell$-stationary point, as defined in \eqref{deflimsta}, can also be shown to be a $d$(irectional)-stationary point; see, for example, \cite[Proposition~1]{lz2019nonmonotone} and \cite[Proposition~1]{lzs2019enhanced}. In the following, we denote the set of all stationary points of problem \eqref{DCpro} by $\mathcal{S}$.

\begin{assumption}\label{assumC}
Assume that there exists an open set $\mathcal{N}_{\phi}$ such that $\mathrm{dom}\,P_1\cap\mathcal{Q}\subseteq\mathcal{N}_{\phi}\subseteq\mathrm{int}\,\mathrm{dom}\,\phi$ and the kernel function $\phi$ has the following properties on $\mathcal{N}_{\phi}$.
\begin{itemize}[leftmargin=1cm]
\item[{\bf C1.}] $\phi$ is $\mu$-strongly convex on $\mathcal{N}_{\phi}$.

\item[{\bf C2.}] $\phi$ is differentiable on $\mathcal{N}_{\phi}$ and $\nabla \phi$ is Lipschtiz continuous on any bounded subset of $\mathcal{N}_{\phi}$.
\end{itemize}
\end{assumption}

This type of assumption is commonly adopted to establish the convergence of the Bregman-type method in the nonconvex setting (see, for example,
\cite[Assumption D]{bstv2017first} and \cite[Assumption 4]{tft2022new}). It can be satisfied by, for example, $\phi(\bm{x})=\frac{1}{2}\|\bm{x}\|^2$ and $\phi(\bm{x})=\frac{1}{2}\|\bm{x}\|^2+\frac{1}{2}\|A\bm{x}\|^2$, both of which will be used in our numerical section. With these additional assumptions, we are now able to establish the global subsequential convergence of our iBPDCA as follows.

\begin{theorem}\label{thm-subseq}
Suppose that Assumptions \ref{assumA}, \ref{assumB}, \ref{assumC} hold and $L<\gamma_{\min}\leq\gamma_k\leq\gamma_{\max}<\infty$ for all $k\geq0$. Moreover, suppose that $0\leq\sigma<(\gamma_{\min}-L)/\gamma_{\min}$ for (SC1), and that $0\leq\sigma<(\gamma_{\min}-L)/\gamma_{\max}$ for (SC2). Let $\{\bm{x}^k\}$ be the sequence generated by the iBPDCA in Algorithm \ref{alg-iBPDCA} with (SC1) or (SC2). Then, any cluster point $\bm{x}^*$ of $\{\bm{x}^k\}$ is a stationary point of problem \eqref{DCpro}.
\end{theorem}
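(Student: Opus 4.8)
The plan is to take a cluster point $\bm{x}^*$ of $\{\bm{x}^k\}$, say $\bm{x}^{k_j}\to\bm{x}^*$ along a subsequence, and to produce an element of $\partial(\iota_{\mathcal Q}+P_1)(\bm{x}^*) - \partial P_2(\bm{x}^*) + \nabla f(\bm{x}^*)$ equal to $0$ by passing to the limit in the (inexact) optimality condition \eqref{iBPDCA-inexcond}. The first step is to rewrite \eqref{iBPDCA-inexcond} as
\begin{equation*}
\Delta^k - \nabla f(\bm{x}^k) + \bm{\xi}^k - \gamma_k\big(\nabla\phi(\bm{x}^{k+1})-\nabla\phi(\bm{x}^k)\big) \in \partial_{\delta_k} P_1(\bm{x}^{k+1}),
\end{equation*}
and then add the contribution of the constraint set. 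Since $\bm{x}^{k+1}\in\mathrm{int}\,\mathrm{dom}\,\phi\subseteq\mathcal Q$, the indicator $\iota_{\mathcal Q}$ contributes $0$ to the subdifferential at $\bm{x}^{k+1}$, and (using $\mathrm{dom}\,P_1\cap\mathrm{int}\,\mathcal Q\neq\emptyset$) the sum rule gives $\partial_{\delta_k}P_1(\bm{x}^{k+1})\subseteq\partial_{\delta_k}(\iota_{\mathcal Q}+P_1)(\bm{x}^{k+1})$ up to harmless bookkeeping; in any case it suffices to track $P_1$ and recover $\iota_{\mathcal Q}$ at the limit from $\bm{x}^*\in\mathcal Q$ (closedness of $\mathcal Q$).

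The second step is to control all the pieces along $k=k_j$. By Proposition \ref{pro-bdSC1}(iii)--(iv) (resp.\ Proposition \ref{pro-bdSC2}(iii)--(iv)), we have $\Delta^k\to 0$, $\delta_k\to 0$, and $\mathcal D_\phi(\bm{x}^{k+1},\bm{x}^k)\to 0$. Using Assumption \ref{assumC}C1 ($\mu$-strong convexity of $\phi$ on $\mathcal N_\phi$), $\mathcal D_\phi(\bm{x}^{k+1},\bm{x}^k)\geq\frac{\mu}{2}\|\bm{x}^{k+1}-\bm{x}^k\|^2$, so $\|\bm{x}^{k+1}-\bm{x}^k\|\to 0$; hence $\bm{x}^{k_j+1}\to\bm{x}^*$ as well. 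Since $\{\bm{x}^k\}$ is bounded (Proposition \ref{pro-bdSC1}(ii) / \ref{pro-bdSC2}(ii)) and stays in $\mathcal N_\phi$, Assumption \ref{assumC}C2 gives $\nabla\phi(\bm{x}^{k+1})-\nabla\phi(\bm{x}^k)\to 0$; Assumption \ref{assumB} gives $\nabla f(\bm{x}^{k_j})\to\nabla f(\bm{x}^*)$ by continuity; and $\gamma_{k_j}$ is bounded, so $\gamma_{k_j}\big(\nabla\phi(\bm{x}^{k_j+1})-\nabla\phi(\bm{x}^{k_j})\big)\to 0$. For the $P_2$-term, $\bm{\xi}^{k_j}\in\partial P_2(\bm{x}^{k_j})$ is bounded (local boundedness of the convex subdifferential, $P_2$ finite-valued), so along a further subsequence $\bm{\xi}^{k_j}\to\bm{\xi}^*$, and by outer semicontinuity/closedness of $\partial P_2$ we get $\bm{\xi}^*\in\partial P_2(\bm{x}^*)$. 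Therefore the left-hand side above converges along this subsequence to $-\nabla f(\bm{x}^*)+\bm{\xi}^*$.

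The third step is the limiting argument on the $\varepsilon$-subdifferential. We have $\bm{v}^{k_j}:=\Delta^{k_j}-\nabla f(\bm{x}^{k_j})+\bm{\xi}^{k_j}-\gamma_{k_j}(\nabla\phi(\bm{x}^{k_j+1})-\nabla\phi(\bm{x}^{k_j}))\in\partial_{\delta_{k_j}}P_1(\bm{x}^{k_j+1})$ with $\bm{x}^{k_j+1}\to\bm{x}^*$, $\bm{v}^{k_j}\to -\nabla f(\bm{x}^*)+\bm{\xi}^*$, and $\delta_{k_j}\to 0$. Passing to the limit in the defining inequality $P_1(\bm{y})\geq P_1(\bm{x}^{k_j+1})+\langle\bm{v}^{k_j},\bm{y}-\bm{x}^{k_j+1}\rangle-\delta_{k_j}$ for arbitrary $\bm{y}$, and using lower semicontinuity of $P_1$ together with the fact that $P_1(\bm{x}^{k_j+1})\to P_1(\bm{x}^*)$ — which one gets by plugging a suitable $\bm{y}$ to bound $\limsup P_1(\bm{x}^{k_j+1})$ from above, or more directly from the approximate descent inequality and convergence of $F(\bm{x}^k)$ — yields $-\nabla f(\bm{x}^*)+\bm{\xi}^*\in\partial P_1(\bm{x}^*)$. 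Adding $\iota_{\mathcal Q}$ (zero subgradient at the interior-compatible point, or rather: $-\nabla f(\bm{x}^*)+\bm{\xi}^*\in\partial P_1(\bm{x}^*)\subseteq\partial(\iota_{\mathcal Q}+P_1)(\bm{x}^*)$ since $\bm{x}^*\in\mathcal Q$) rearranges to $0\in\partial(\iota_{\mathcal Q}+P_1)(\bm{x}^*)-\partial P_2(\bm{x}^*)+\nabla f(\bm{x}^*)$, i.e.\ $\bm{x}^*\in\mathcal S$.

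The main obstacle I anticipate is establishing $P_1(\bm{x}^{k_j+1})\to P_1(\bm{x}^*)$ cleanly: lower semicontinuity only gives $\liminf P_1(\bm{x}^{k_j+1})\geq P_1(\bm{x}^*)$, and the reverse inequality needs a separate argument. The natural route is to use the $\delta_k$-subgradient inequality with $\bm{y}=\bm{x}^*$: $P_1(\bm{x}^{k_j+1})\leq P_1(\bm{x}^*)-\langle\bm{v}^{k_j},\bm{x}^*-\bm{x}^{k_j+1}\rangle+\delta_{k_j}$, whose right-hand side tends to $P_1(\bm{x}^*)$ since $\bm{v}^{k_j}$ is bounded, $\bm{x}^{k_j+1}\to\bm{x}^*$, and $\delta_{k_j}\to 0$; this forces $\limsup P_1(\bm{x}^{k_j+1})\leq P_1(\bm{x}^*)$ and closes the gap. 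A secondary technical point is ensuring all relevant iterates remain in the open set $\mathcal N_\phi$ where the strong convexity and Lipschitz-gradient properties of $\phi$ hold; this follows because $\{\bm{x}^k\}\subseteq\mathrm{dom}\,P_1\cap\mathcal Q\subseteq\mathcal N_\phi$ by construction, so boundedness of the sequence confines it to a bounded subset of $\mathcal N_\phi$ on which Assumption \ref{assumC}C2 applies.
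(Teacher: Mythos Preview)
Your proof is correct and follows the same approach as the paper: the paper also controls $\Delta^k$, $\delta_k$, $\nabla\phi(\bm{x}^{k+1})-\nabla\phi(\bm{x}^k)$, $\nabla f(\bm{x}^k)$, and $\bm{\xi}^k$ along a subsequence and passes to the limit, the only cosmetic difference being that it adds $\iota_{\mathcal Q}$ before taking the limit and invokes outer semicontinuity of $\partial_{\delta}(\iota_{\mathcal Q}+P_1)$ as a black box (citing \cite{hl1993convex}) rather than unpacking the subgradient inequality by hand. Incidentally, the ``main obstacle'' you anticipate --- proving $P_1(\bm{x}^{k_j+1})\to P_1(\bm{x}^*)$ --- is not actually needed: lower semicontinuity alone gives $\liminf_j P_1(\bm{x}^{k_j+1})\geq P_1(\bm{x}^*)$, which already suffices to pass to the limit in $P_1(\bm{y})\geq P_1(\bm{x}^{k_j+1})+\langle\bm{v}^{k_j},\bm{y}-\bm{x}^{k_j+1}\rangle-\delta_{k_j}$ and conclude $\bm{v}^*\in\partial P_1(\bm{x}^*)$.
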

\begin{proof}
First, it follows from Proposition \ref{pro-bdSC1}(ii) and Proposition \ref{pro-bdSC2}(ii) that the sequence $\{\bm{x}^k\}$ is bounded and thus has at least one cluster point. Suppose that $\bm{x}^*$ is a cluster point of $\{\bm{x}^k\}$ and $\{\bm{x}^{k_i}\}$ is a convergent subsequence such that $\lim\limits_{i\to\infty}\bm{x}^{k_i}=\bm{x}^*$. Since $\bm{x}^{k_i}\in \mathrm{dom}\,P_1\cap\mathrm{int}\,\mathrm{dom}\,\phi$ for all $k_i\geq0$, $P_1$ is a proper closed convex function and $\overline{\mathrm{dom}\,\phi}=Q$ (by Assumption \ref{assumA}1), then $\bm{x}^*\in\mathrm{dom}\,P_1\cap\mathcal{Q}$. Moreover, it is clear that  $\bm{x}^{k_i}\in\mathcal{N}_{\phi}$ for all $k_i\geq0$, where the open set $\mathcal{N}_{\phi}$ is given in Assumption \ref{assumC}. It then follows from the strong convexity of $\phi$ on $\mathcal{N}_{\phi}$ (by Assumption \ref{assumC}1) that, for all $k_i \geq 0$,
\begin{equation*}
\mathcal{D}_{\phi}({\bm{x}}^{k_i+1},\,\bm{x}^{k_i})
\geq \frac{\mu}{2}\|{\bm{x}}^{k_i+1}-\bm{x}^{k_i}\|^2.
\end{equation*}
Recall from Proposition \ref{pro-bdSC1}(iii) and Proposition \ref{pro-bdSC2}(iii) that $\mathcal{D}_{\phi}({\bm{x}}^{k+1}, \,\bm{x}^{k}) \to 0$. This yields
\begin{equation}\label{succlim2SC1}
\bm{x}^{k_i+1} - \bm{x}^{k_i} \to 0.
\end{equation}
Moreover, since $L<\gamma_{\min}\leq\gamma_k\leq\gamma_{\max}<\infty$ for all $k\geq0$ and $\{\bm{x}^{k_i}\}$ is bounded and contained in $\mathcal{N}_{\phi}$, it then follows from Assumption \ref{assumC}2 that
\begin{equation*}
\begin{aligned}
\gamma_{k_i}\|\nabla\phi(\bm{x}^{k_i+1})-\nabla \phi(\bm{x}^{k_i})\|
\leq \gamma_{\max}\,\beta\|\bm{x}^{k_i+1}-\bm{x}^{k_i}\|, \quad \forall \,k_i \geq 0,
\end{aligned}
\end{equation*}
where $\beta$ is a Lipschitz constant of $\nabla\phi$ on a certain bounded subset that contains $\{\bm{x}^{k_i}\}$. This relation and \eqref{succlim2SC1} imply that
\begin{equation}\label{succlim3SC1}
\gamma_{k_i}\big(\nabla\phi(\bm{x}^{k_i+1}) - \nabla \phi(\bm{x}^{k_i})\big) \to 0.
\end{equation}
In addition, $\overline{\mathrm{dom}}\,\phi=\mathcal{Q}$ in Assumption \ref{assumA}1 implies $\mathrm{int}\,\mathrm{dom}\,\phi=\mathrm{int}\,\mathcal{Q}$ due to the convexity of $\mathrm{dom}\,\phi$ and \cite[Proposition 3.36(iii)]{bc2011convex}. We then have that $\bm{x}^k\in\mathrm{int}\,\mathrm{dom}\,\phi=\mathrm{int}\,\mathcal{Q}$ and hence $\partial\iota_{\mathcal{Q}}({\bm{x}}^{k})=\{0\}$ for all $k\geq0$. Using this fact and condition \eqref{iBPDCA-inexcond}, we further obtain that
\begin{equation}\label{suboptki-iBPDCA2SC1}
\begin{aligned}
-\gamma_{k_i}(\nabla \phi(\bm{x}^{k_i+1})-\nabla \phi(\bm{x}^{k_i}))+ \Delta^{k_i} &\in \partial_{\delta_{k_i}}P_1({\bm{x}}^{k_i+1}) - \bm{\xi}^{k_i} + \nabla f(\bm{x}^{k_i}) \\
&= \partial\iota_{\mathcal{Q}}({\bm{x}}^{k_i+1})
+ \partial_{\delta_{k_i}}P_1({\bm{x}}^{k_i+1}) - \bm{\xi}^{k_i}
+ \nabla f(\bm{x}^{k_i})  \\
&\subseteq \partial_{\delta_{k_i}}\left(\iota_{\mathcal{Q}}+P_1\right)({\bm{x}}^{k_i+1})
- \bm{\xi}^{k_i} + \nabla f(\bm{x}^{k_i}),
\end{aligned}
\end{equation}
where the last inclusion can be verified by the definition of the $\varepsilon$-subdifferential of $\iota_{\mathcal{Q}}+P_1$ at ${\bm{x}}^{k_i+1}$. Note that the sequence $\{\bm{\xi}^{k_i}\}$ is bounded due to the continuity and convexity of $P_2$ and the boundedness of $\{{\bm{x}}^{k_i}\}$. Thus, by passing to a further subsequence if necessary, we may assume without loss of generality that $\bm{\xi}^*:=\lim\limits_{i\to\infty}\bm{\xi}^{k_i}$ exists and $\bm{\xi}^*\in\partial P_2(\bm{x}^*)$ due to the closedness of $\partial P_2$. In addition, we have $\delta_k\to0$ and $\|\Delta^k\|\to0$ from Proposition \ref{pro-bdSC1}(iv) for (SC1) and Proposition \ref{pro-bdSC2}(iv) for (SC2). Then, passing to the limit in \eqref{suboptki-iBPDCA2SC1}, and invoking \eqref{succlim2SC1}, \eqref{succlim3SC1}, $\|\Delta^k\| \to 0$, Assumption \ref{assumB} with $\{\bm{x}^{k_i}\}$ and $\bm{x}^*$ contained in $\mathcal{N}_{f}$, and the outer semicontinuity of $\partial_{\delta_k}\left(\iota_{\mathcal{Q}}+P_1\right)$ (see, for example, \cite[Proposition 4.1.1]{hl1993convex}) with $\delta_k\to0$, we obtain that
\begin{equation*}
0 \in \partial\left(\iota_{\mathcal{Q}}+P_1\right)(\bm{x}^*) - \partial P_2(\bm{x}^*) + \nabla f(\bm{x}^*),
\end{equation*}
which implies that $\bm{x}^*$ is a stationary point of problem \eqref{DCpro} and completes the proof.
\end{proof}

\subsection{Global convergence of the whole sequence}

We next develop the global convergence of the whole sequence generated by the iBPDCA in Algorithm \ref{alg-iBPDCA}. To this end, we first characterize the sequence of objective values along $\{\bm{x}^k\}$ in the following proposition.

\begin{proposition}\label{pro-fval}
Suppose that Assumptions \ref{assumA}, \ref{assumB}, \ref{assumC} hold and $L<\gamma_{\min}\leq\gamma_k\leq\gamma_{\max}<\infty$ for all $k\geq0$. Moreover, suppose that $0\leq\sigma<(\gamma_{\min}-L)/\gamma_{\min}$ for (SC1), and that $0\leq\sigma<(\gamma_{\min}-L)/\gamma_{\max}$ for (SC2). Let $\{\bm{x}^k\}$ be the sequence generated by the iBPDCA in Algorithm \ref{alg-iBPDCA}. Then, the following statements hold.
\begin{itemize}
\item[{\rm(i)}] $\zeta:=\lim\limits_{k\to\infty}F({\bm{x}}^k)$ exists.
\item[{\rm(ii)}] $F \equiv \zeta$ on $\Omega$, where $\Omega$ is the set of all cluster points of $\{\bm{x}^{k}\}$.
\end{itemize}
\end{proposition}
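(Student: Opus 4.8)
The plan is to handle the two statements in sequence, with (i) being essentially a bookkeeping exercise and (ii) requiring the standard "objective value is constant on the cluster set" argument. For (i), I would simply invoke the already-established convergence of the monotone quantities: for (SC1), Proposition \ref{pro-bdSC1}(i) gives that $\{F(\bm{x}^k)\}$ is non-increasing and convergent, so $\zeta := \lim_{k\to\infty} F(\bm{x}^k)$ exists directly. For (SC2), Proposition \ref{pro-bdSC2}(i) gives that $\{F(\bm{x}^k) + \sigma\gamma_{\max}\mathcal{D}_{\phi}(\bm{x}^k,\bm{x}^{k-1})\}$ converges to $\widetilde{\zeta}$; combining this with Proposition \ref{pro-bdSC2}(iii), which says $\mathcal{D}_{\phi}(\bm{x}^k,\bm{x}^{k-1}) \to 0$ (shifting the index in $\mathcal{D}_{\phi}(\bm{x}^{k+1},\bm{x}^k)\to 0$), we get that $F(\bm{x}^k)$ itself converges, and $\zeta = \widetilde{\zeta}$. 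So (i) holds for both criteria.

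For (ii), let $\bm{x}^* \in \Omega$ and pick a subsequence $\bm{x}^{k_i} \to \bm{x}^*$. By (i) we already know $F(\bm{x}^{k_i}) \to \zeta$, so it suffices to show $F(\bm{x}^{k_i}) \to F(\bm{x}^*)$, i.e. lower semicontinuity is not enough — we need the full limit. The idea is the usual one: use the optimality/inexactness characterization \eqref{iBPDCA-inexcond} of $\bm{x}^{k_i+1}$ to control $P_1(\bm{x}^{k_i+1})$ from above by a quantity that converges to $P_1(\bm{x}^*)$, combine with lower semicontinuity of $P_1$ (and continuity of $f$, $P_2$, $\iota_{\mathcal{Q}}$ on the relevant domain) to pin down $\limsup P_1(\bm{x}^{k+1}) \le P_1(\bm{x}^*) \le \liminf P_1(\bm{x}^{k+1})$. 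Concretely, from \eqref{iBPDCA-inexcond} there is $\bm{d}^{k_i+1} \in \partial_{\delta_{k_i}} P_1(\bm{x}^{k_i+1})$, and testing the $\varepsilon$-subgradient inequality at the point $\bm{x}^*$ gives
\[
P_1(\bm{x}^{k_i+1}) \le P_1(\bm{x}^*) + \langle \bm{d}^{k_i+1}, \bm{x}^{k_i+1} - \bm{x}^* \rangle + \delta_{k_i},
\]
where $\bm{d}^{k_i+1} = \Delta^{k_i} - \nabla f(\bm{x}^{k_i}) + \bm{\xi}^{k_i} - \gamma_{k_i}(\nabla\phi(\bm{x}^{k_i+1}) - \nabla\phi(\bm{x}^{k_i}))$. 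Using $\bm{x}^{k_i+1} - \bm{x}^{k_i} \to 0$ (from strong convexity of $\phi$ on $\mathcal{N}_\phi$ and $\mathcal{D}_\phi(\bm{x}^{k+1},\bm{x}^k)\to 0$, exactly as in the proof of Theorem \ref{thm-subseq}), hence $\bm{x}^{k_i+1} \to \bm{x}^*$; using $\|\Delta^{k_i}\| \to 0$, $\delta_{k_i} \to 0$, $\gamma_{k_i}(\nabla\phi(\bm{x}^{k_i+1}) - \nabla\phi(\bm{x}^{k_i})) \to 0$, and boundedness of $\{\nabla f(\bm{x}^{k_i})\}$ and $\{\bm{\xi}^{k_i}\}$, the inner product term $\langle \bm{d}^{k_i+1}, \bm{x}^{k_i+1} - \bm{x}^*\rangle \to 0$. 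Therefore $\limsup_{i\to\infty} P_1(\bm{x}^{k_i+1}) \le P_1(\bm{x}^*)$. Combined with lower semicontinuity of $P_1$ giving $\liminf P_1(\bm{x}^{k_i+1}) \ge P_1(\bm{x}^*)$, we get $P_1(\bm{x}^{k_i+1}) \to P_1(\bm{x}^*)$. Then
\[
\zeta = \lim_{i\to\infty} F(\bm{x}^{k_i+1}) = \lim_{i\to\infty}\big( P_1(\bm{x}^{k_i+1}) - P_2(\bm{x}^{k_i+1}) + f(\bm{x}^{k_i+1}) \big) = P_1(\bm{x}^*) - P_2(\bm{x}^*) + f(\bm{x}^*) = F(\bm{x}^*),
\]
using continuity of $P_2$ and of $f$ on $\mathcal{N}_f$ (Assumption \ref{assumB}), $\iota_{\mathcal{Q}}(\bm{x}^{k_i+1}) = \iota_{\mathcal{Q}}(\bm{x}^*) = 0$ since iterates lie in $\mathrm{int}\,\mathcal{Q}$ and $\bm{x}^* \in \mathcal{Q}$, and the fact that $\{F(\bm{x}^{k_i+1})\}$ is a subsequence of the convergent sequence $\{F(\bm{x}^k)\}$.

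The main obstacle — such as it is — is the bookkeeping in the $\limsup$ step: one must be careful that the error pair $(\Delta^{k_i}, \delta_{k_i})$ attached to the \emph{shifted} index $k_i+1$ is controlled, and that $\bm{x}^{k_i+1}$ (not just $\bm{x}^{k_i}$) converges to $\bm{x}^*$, which relies on the successive-difference estimates from Propositions \ref{pro-bdSC1} and \ref{pro-bdSC2} being valid uniformly (they are, since $\gamma_k \in [\gamma_{\min},\gamma_{\max}]$ and $\phi$ is $\mu$-strongly convex on $\mathcal{N}_\phi$). One subtlety worth flagging: we evaluate $F$ along the shifted subsequence $\{\bm{x}^{k_i+1}\}$ rather than $\{\bm{x}^{k_i}\}$ — this is needed because the inexactness condition \eqref{iBPDCA-inexcond} naturally bounds $P_1$ at the $(k_i+1)$-th iterate; since $\{F(\bm{x}^k)\}$ converges by (i), passing to the shifted index changes nothing about the limit $\zeta$. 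Everything else is continuity and the already-proven asymptotic vanishing of the error quantities, so no genuinely new estimate is required beyond what Theorem \ref{thm-subseq} already uses.
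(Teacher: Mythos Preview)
Your proposal is correct and follows essentially the same route as the paper: establish (i) by invoking Propositions \ref{pro-bdSC1}(i) and \ref{pro-bdSC2}(i)\&(iii), and for (ii) use the $\varepsilon$-subgradient inequality from \eqref{iBPDCA-inexcond} to obtain a $\limsup$ bound on $P_1$ at the shifted subsequence, then combine with lower semicontinuity. The only notable difference is that the paper bounds $f(\bm{x}^{k_i})$ via the restricted $L$-smooth adaptable inequality and $-P_2(\bm{x}^{k_i})$ via the convex subgradient inequality before summing, whereas you simply invoke continuity of $f$ on $\mathcal{N}_f$ (Assumption~\ref{assumB}) and of $P_2$ (Assumption~\ref{assumA}2) --- your version is a mild streamlining but not a different argument.
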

\begin{proof}
\textit{Statement (i)}. For (SC1), it has been proved in Proposition \ref{pro-bdSC1}(i).
For (SC2), it can be easily derived from Proposition \ref{pro-bdSC2}(i) and Proposition \ref{pro-bdSC2}(iii) that
\begin{equation*}
\begin{aligned}
\widetilde{\zeta}
&= \lim_{k\to\infty}\big( F({\bm{x}}^k) + \sigma\gamma_{\max}\mathcal{D}_{\phi}({\bm{x}}^k,\,\bm{x}^{k-1})\big) \\
&= \lim_{k\to\infty} F({\bm{x}}^k) + \sigma\gamma_{\max}\lim_{k\to\infty}\mathcal{D}_{\phi}({\bm{x}}^k,\,\bm{x}^{k-1})=\lim_{k\to\infty} F({\bm{x}}^k),
\end{aligned}
\end{equation*}
which implies that $\lim\limits_{k\to\infty} F({\bm{x}}^k)$ exists. For simplicity, we also use $\zeta$ to denote this limit in the rest of this paper.

\textit{Statement (ii)}. We first note from the boundedness of $\{{\bm{x}}^k\}$ that $\emptyset \neq \Omega \subseteq\mathcal{S}$, where $\mathcal{S}$ is the set of all stationary points of problem \eqref{DCpro}. Take any $\bm{x}^{\infty}\in\Omega$ and let $\{{\bm{x}}^{k_i}\}$ be a convergent subsequence such that $\lim\limits_{i\to\infty} {\bm{x}}^{k_i} = \bm{x}^{\infty}$. Since $\bm{x}^{k_i} \in \mathrm{dom}\,P_1\cap\mathrm{int}\,\mathrm{dom}\,\phi$ for all $k_i\geq0$, $P_1$ is a proper closed convex function and $\overline{\mathrm{dom}\,\phi}=Q$ (by Assumption \ref{assumA}1), then $\bm{x}^{\infty}\in\mathrm{dom}\,P_1\cap\mathcal{Q}$, and $\nabla f(\bm{x}^{\infty})$ and $\nabla \phi(\bm{x}^{\infty})$ are well-defined from Assumptions \ref{assumB} and \ref{assumC}2, respectively. Moreover, it is easy to see that $\bm{x}^{k_i},\,\bm{x}^{\infty}\in\mathcal{X}\cap\mathcal{N}_{\phi}\subseteq\mathrm{int}\,\mathrm{dom}\,\phi$, where the sets $\mathcal{X}$ and $\mathcal{N}_{\phi}$ are given in Assumptions \ref{assumA} and \ref{assumC}, respectively.

From condition \eqref{iBPDCA-inexcond}, for all $k_i\geq 1$, there exists a $\bm{d}^{k_i} \in \partial_{\delta_{k_i-1}}P_1({\bm{x}}^{k_i})$ such that
\begin{equation*}
\Delta^{k_i-1} = \bm{d}^{k_i}
+ \nabla f(\bm{x}^{k_i-1}) - \bm{\xi}^{k_i-1}
+ \gamma_{k_i-1}\big(\nabla \phi(\bm{x}^{k_i}) - \nabla \phi(\bm{x}^{k_i-1})\big).
\end{equation*}
Using this equality and the definition of $\partial_{\delta_{k_i-1}}P_1$, we see that, for all $k_i\geq 1$,
\begin{equation*}
\begin{aligned}
& P_1(\bm{x}^{\infty})
\geq  P_1({\bm{x}}^{k_i}) + \langle \bm{d}^{k_i}, \,\bm{x}^{\infty} - {\bm{x}}^{k_i} \rangle
- \delta_{k_i-1}  \\
&= P_1({\bm{x}}^{k_i}) + \langle -\nabla f(\bm{x}^{k_i-1}) + \bm{\xi}^{k_i-1} - \gamma_{k_i-1}\big(\nabla \phi(\bm{x}^{k_i})-\nabla \phi(\bm{x}^{k_i-1})\big) + \Delta^{k_i-1}, \,\bm{x}^{\infty} - {\bm{x}}^{k_i}\rangle
- \delta_{k_i-1},  \\
\end{aligned}
\end{equation*}
which implies that
\begin{equation}\label{ineq1subSC1}
\begin{aligned}
P_1({\bm{x}}^{k_i})
&\leq P_1(\bm{x}^{\infty})
+ \langle\bm{\xi}^{k_i-1}, \,\bm{x}^{k_i} - \bm{x}^{k_i-1}\rangle
+ \langle\bm{\xi}^{k_i-1}, \,\bm{x}^{k_i-1} - \bm{x}^{\infty}\rangle \\
&\quad - \langle \nabla f(\bm{x}^{k_i-1}) + \gamma_{k_i-1}\big(\nabla \phi(\bm{x}^{k_i})-\nabla \phi(\bm{x}^{k_i-1})\big) - \Delta^{k_i-1}, \,{\bm{x}}^{k_i} - \bm{x}^{\infty}\rangle
+ \delta_{k_i-1}.
\end{aligned}
\end{equation}
On the other hand, it follows from the convexity of $P_2$ and $\bm{\xi}^{k}\in\partial P_2(\bm{x}^k)$ that, for all $k_i\geq 1$,
\begin{equation}\label{ineq2subSC1}
-P_2({\bm{x}}^{k_i}) \leq -P_2({\bm{x}}^{k_i-1}) - \langle\bm{\xi}^{k_i-1}, \,{\bm{x}}^{k_i}-{\bm{x}}^{k_i-1}\rangle.
\end{equation}
Moreover, since $(f,\phi)$ is ${L}$-smooth adaptable restricted on $\mathcal{X} \supseteq \mathrm{dom}\,P_1\cap\mathcal{Q}$ (by Assumption \ref{assumA}3) and $\bm{x}^{k_i},\,\bm{x}^{\infty}\in\mathcal{X}\cap\mathcal{N}_{\phi}\subseteq\mathrm{int}\,\mathrm{dom}\,\phi$, it holds that
\begin{equation}\label{ineq3subSC1}
f(\bm{x}^{k_i}) \leq f(\bm{x}^{\infty}) + \langle \nabla f(\bm{x}^{\infty}), \,\bm{x}^{k_i}-\bm{x}^{\infty}\rangle 
+ {L}\mathcal{D}_{\phi}(\bm{x}^{k_i},\,\bm{x}^{\infty}).
\end{equation}
Then, summing \eqref{ineq1subSC1}, \eqref{ineq2subSC1} and \eqref{ineq3subSC1}, together with $\bm{x}^{k_i},\,\bm{x}^{\infty}\in\mathcal{Q}$, we obtain that, for all $k_i\geq 1$,
\begin{equation*}
\begin{aligned}
&\quad F({\bm{x}}^{k_i})
:= \iota_{\mathcal{Q}}({\bm{x}}^{k_i}) + P_1({\bm{x}}^{k_i}) - P_2({\bm{x}}^{k_i}) + f({\bm{x}}^{k_i}) \\
&\leq F(\bm{x}^{\infty}) + P_2(\bm{x}^{\infty}) -P_2({\bm{x}}^{k_i-1})
+ \langle \bm{\xi}^{k_i-1}, \,{\bm{x}}^{k_i-1}-{\bm{x}}^{\infty}\rangle \\
&\quad
- \langle \nabla f(\bm{x}^{k_i-1})-\nabla f(\bm{x}^{\infty}) + \gamma_{k_i-1}\big(\nabla \phi(\bm{x}^{k_i})-\nabla \phi(\bm{x}^{k_i-1})\big) 
- \Delta^{k_i-1}, \,{\bm{x}}^{k_i} - \bm{x}^{\infty}\rangle
+ \delta_{k_i-1}.
\end{aligned}
\end{equation*}
Since ${\bm{x}}^{k_i}-{\bm{x}}^{k_i-1}\to0$ from \eqref{succlim2SC1}, then $\lim\limits_{i\to\infty} {\bm{x}}^{k_i-1} = \bm{x}^{\infty}$. This together with Assumptions \ref{assumB} and \ref{assumC}2,  $L<\gamma_{\min}\leq\gamma_k\leq\gamma_{\max}<\infty$, the boundedness of $\{{\bm{x}}^{k_i}\}$, and $\|\Delta^k\|\to0$ (by Propositions \ref{pro-bdSC1}(iv) and \ref{pro-bdSC2}(iv)) implies that
\begin{equation*}
\langle \nabla f(\bm{x}^{k_i-1}) - \nabla f(\bm{x}^{\infty})  + \gamma_{k_i-1}\big(\nabla \phi(\bm{x}^{k_i})-\nabla \phi(\bm{x}^{k_i-1})\big) - \Delta^{k_i-1}, \,{\bm{x}}^{k_i} - \bm{x}^{\infty}\rangle
\to 0.
\end{equation*}
In addition, it follows from the continuity and convexity of $P_2$ and $\lim\limits_{i\to\infty}{\bm{x}}^{k_i-1}=\bm{x}^{\infty}$ (by \eqref{succlim2SC1}) that the sequence $\{\bm{\xi}^{k_i-1}\}$ is bounded, $\langle \bm{\xi}^{k_i-1}, \,{\bm{x}}^{k_i-1}-{\bm{x}}^{\infty}\rangle\to0$, and $P_2(\bm{x}^{\infty}) -P_2({\bm{x}}^{k_i-1})\to0$. Using these facts and invoking $\delta_k\to0$, we have that
\begin{equation*}
\zeta = \lim\limits_{i\to\infty} F({\bm{x}}^{k_i}) \leq \limsup\limits_{i\to\infty} F({\bm{x}}^{k_i}) \leq F(\bm{x}^{\infty}).
\end{equation*}
Finally, since $F$ is lower semicontinuous, we also have that
\begin{equation*}
F(\bm{x}^{\infty}) \leq \liminf\limits_{i\to\infty} F({\bm{x}}^{k_i}) \leq \lim\limits_{i\to\infty} F({\bm{x}}^{k_i}) = \zeta.
\end{equation*}
Consequently, $F(\bm{x}^{\infty})=\lim\limits_{i\to\infty} F({\bm{x}}^{k_i}) = \zeta$. Since $\bm{x}^{\infty}\in\Omega$ is arbitrary, we conclude that $F$ is constant on $\Omega$. This completes the proof.
\end{proof}

To establish the global sequential convergence, we further make the following additional assumptions.

\begin{assumption}\label{assumD}
Assume that $f$ and $P_2$ satisfy the following properties.
\begin{itemize}[leftmargin=1cm]
\item[{\bf D1.}] $\nabla f$ is Lipschtiz continuous on any bounded subset of the open set $\mathcal{N}_f$ given in Assumption \ref{assumB}.

\item[{\bf D2.}] $P_2$ is continuously differentiable on an open set $\mathcal{N}_{\mathcal{S}}$ that contains $\mathcal{S}$, where $\mathcal{S}$ is the set of all stationary points of problem \eqref{DCpro}. Moreover, $\nabla P_2$ is locally Lipschitz continuous on $\mathcal{N}_{\mathcal{S}}$.
\end{itemize}
\end{assumption}

Assumptions \ref{assumD}1 and \ref{assumD}2 are common technical assumptions employed in the global convergence analysis of existing Bregman-type algorithms or proximal DC-type algorithms; see, for example, \cite{bstv2017first,tft2022new,wcp2018proximal}.

\begin{assumption}\label{assumE}
Assume that there exists a $K_0>0$ such that $\delta_{k}\equiv0$ for all $k \geq K_0$ in \eqref{iBPDCA-inexcond}.
\end{assumption}

Note that, under Assumption \ref{assumE}, no error $\delta_k$ is allowed in the computation of $\partial P_1$ after finitely many iterations, while a non-zero error term $\Delta^k$ remains permissible in \eqref{iBPDCA-inexcond}. Consequently, even with this assumption, Algorithm \ref{alg-iBPDCA} retains its inexact nature compared to the exact BPDCA studied in \cite{tft2022new}, and thus the convergence of the whole sequence must still be established accordingly. While Assumption \ref{assumE} looks restrictive, it is indeed achievable in many application problems; see examples in \cite{twst2020sparse,zts2020learning} and the $\ell_{1-2}$ regularized least squares problem in our numerical section \ref{sec-num-l12reg}. Nevertheless, Assumption \ref{assumE} could limit the applicability of our iBPDCA when addressing a problem with complex constraints, for example, the constrained $\ell_{1-2}$ sparse optimization problem in our numerical section \ref{sec-num-l12con}. Investigating the possibility of eliminating this requirement when establishing the convergence of the whole sequence remains an interesting direction for our future research.

With the above preparations, we are now ready to establish the global convergence of the whole sequence generated by our iBPDCA with (SC1). Our analysis follows a similar line of arguments as in \cite{tft2022new,wcp2018proximal} for exact proximal DC-type methods, but is more involved due to the presence of errors in our algorithm.

\begin{theorem}[\textbf{Global sequential convergence of the iBPDCA with (SC1)}]\label{thm-wholeseqSC1}
Suppose that Assumptions \ref{assumA}, \ref{assumB}, \ref{assumC}, \ref{assumD}, \ref{assumE} hold, $L<\gamma_{\min}\leq\gamma_k\leq\gamma_{\max}<\infty$, $0\leq\sigma<(\gamma_{\min}-L)/\gamma_{\min}$ for all $k\geq0$, and $\sum_{k=0}^{\infty}\|\Delta^k\|<\infty$. Let $\{\bm{x}^k\}$ be the sequence generated by the iBPDCA with (SC1) in Algorithm \ref{alg-iBPDCA}. If $F$ is a KL function, then the sequence $\{\bm{x}^k\}$ converges to an $\ell$-stationary point of problem \eqref{DCpro}.
\end{theorem}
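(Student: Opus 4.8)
The plan is to follow the now-standard Kurdyka-{\L}ojasiewicz (KL) recipe for proving convergence of the whole sequence, adapting the template of \cite{abs2013convergence,bst2014proximal} (and of \cite{tft2022new,wcp2018proximal} for proximal DC-type methods) to the inexact Bregman setting. The three ingredients one always needs are: (a) a \emph{sufficient decrease} inequality of the form $F(\bm{x}^{k})-F(\bm{x}^{k+1})\geq c_1\|\bm{x}^{k+1}-\bm{x}^{k}\|^2 - (\text{error terms})$; (b) a \emph{relative error} (subgradient) bound $\mathrm{dist}(0,\partial F(\bm{x}^{k+1}))\leq c_2\|\bm{x}^{k+1}-\bm{x}^{k}\| + c_3\|\bm{x}^{k}-\bm{x}^{k-1}\| + (\text{error terms})$; and (c) the fact that the cluster-point set $\Omega$ is compact, $F$ is constant on it (Proposition \ref{pro-fval}(ii)), and $F$ satisfies the KL property, so that the uniformized KL inequality (Proposition \ref{uniKL}) applies near $\Omega$. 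I would organize the proof exactly in this order.

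First I would record the descent estimate: from Lemma \ref{lem-suffdes} combined with (SC1) one already has \eqref{suffdes-C1}, namely $F(\bm{x}^{k+1})-F(\bm{x}^{k})\leq -((1-\sigma)\gamma_k-L)\mathcal{D}_{\phi}(\bm{x}^{k+1},\bm{x}^{k})-\gamma_k\mathcal{D}_{\phi}(\bm{x}^{k},\bm{x}^{k+1})$, and since $\phi$ is $\mu$-strongly convex on $\mathcal{N}_{\phi}$ (Assumption \ref{assumC}1) and the iterates eventually lie in any bounded neighbourhood of $\Omega$, this gives $F(\bm{x}^{k})-F(\bm{x}^{k+1})\geq a\|\bm{x}^{k+1}-\bm{x}^{k}\|^2$ for some $a>0$ and all large $k$; note that under (SC1) there is \emph{no} leftover error term here, which is what makes (SC1) cleaner than (SC2). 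Next I would build the subgradient bound. Starting from condition \eqref{iBPDCA-inexcond}, which says $\Delta^k-\nabla f(\bm{x}^k)+\bm{\xi}^k-\gamma_k(\nabla\phi(\bm{x}^{k+1})-\nabla\phi(\bm{x}^k))\in\partial_{\delta_k}P_1(\bm{x}^{k+1})$, and using Assumption \ref{assumE} so that $\delta_k=0$ for $k\geq K_0$ (hence this is a genuine element of $\partial P_1(\bm{x}^{k+1})$), I would add $\nabla f(\bm{x}^{k+1})-\bm{\xi}^{k+1}$ and, since $\bm{x}^{k+1}\in\mathrm{int}\,\mathcal{Q}$ so $\partial\iota_{\mathcal{Q}}(\bm{x}^{k+1})=\{0\}$, produce an explicit element
\[
\bm{v}^{k+1}:=\Delta^k+\nabla f(\bm{x}^{k+1})-\nabla f(\bm{x}^k)+\bm{\xi}^k-\bm{\xi}^{k+1}-\gamma_k(\nabla\phi(\bm{x}^{k+1})-\nabla\phi(\bm{x}^k))\in\partial F(\bm{x}^{k+1}).
\]
Using Assumption \ref{assumD}1 (local Lipschitz of $\nabla f$), Assumption \ref{assumD}2 together with $\bm{x}^k\in\mathcal{N}_{\mathcal{S}}$ eventually and $\bm{\xi}^k=\nabla P_2(\bm{x}^k)$ being locally Lipschitz, and Assumption \ref{assumC}2 (local Lipschitz of $\nabla\phi$) — all evaluated on the bounded set containing the tail of $\{\bm{x}^k\}$ — one gets $\|\bm{v}^{k+1}\|\leq b\|\bm{x}^{k+1}-\bm{x}^k\|+b'\|\bm{x}^k-\bm{x}^{k-1}\|+\|\Delta^k\|$ for large $k$, hence $\mathrm{dist}(0,\partial F(\bm{x}^{k+1}))\leq b\,s_{k}+b'\,s_{k-1}+\|\Delta^k\|$ where $s_k:=\|\bm{x}^{k+1}-\bm{x}^k\|$.

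Then I would run the standard telescoping argument. Proposition \ref{pro-bdSC1} already gives boundedness of $\{\bm{x}^k\}$, $\mathcal{D}_{\phi}(\bm{x}^{k+1},\bm{x}^k)\to0$ (so $s_k\to0$), convergence of $F(\bm{x}^k)\downarrow\zeta$, and $F\equiv\zeta$ on $\Omega$ (Proposition \ref{pro-fval}); if $F(\bm{x}^{k_0})=\zeta$ for some finite $k_0$ the descent inequality forces $\bm{x}^k$ eventually constant and we are done, so assume $F(\bm{x}^k)>\zeta$ for all $k$. Apply Proposition \ref{uniKL} with $\Gamma=\Omega$: for all large $k$, $\varphi'(F(\bm{x}^k)-\zeta)\cdot\mathrm{dist}(0,\partial F(\bm{x}^k))\geq1$. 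Writing $\Delta\varphi_k:=\varphi(F(\bm{x}^k)-\zeta)-\varphi(F(\bm{x}^{k+1})-\zeta)$, concavity of $\varphi$ gives $\Delta\varphi_k\geq\varphi'(F(\bm{x}^k)-\zeta)(F(\bm{x}^k)-F(\bm{x}^{k+1}))\geq \frac{a\,s_k^2}{\mathrm{dist}(0,\partial F(\bm{x}^k))}\geq\frac{a\,s_k^2}{b\,s_{k-1}+b'\,s_{k-2}+\|\Delta^{k-1}\|}$. Rearranging and using the AM-GM inequality $2\sqrt{uv}\leq u+v$ in the usual way, one obtains $2s_k\leq \tfrac{b}{a}(s_{k-1}+s_{k-2}) \cdot(\text{something})+\ldots$; more precisely the clean route is to bound $s_k^2\leq\frac{1}{a}(b\,s_{k-1}+b'\,s_{k-2}+\|\Delta^{k-1}\|)\Delta\varphi_k$, then apply $2\sqrt{\alpha\beta}\le \eta\alpha+\eta^{-1}\beta$ to get $2s_k\le \eta(b\,s_{k-1}+b'\,s_{k-2}+\|\Delta^{k-1}\|)+\frac{1}{\eta a}\Delta\varphi_k$ with $\eta$ chosen small, and sum over $k$. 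The telescoping of $\sum\Delta\varphi_k$, the summability hypothesis $\sum\|\Delta^k\|<\infty$, and absorbing the $s_{k-1},s_{k-2}$ terms on the left (possible because $\eta$ is small) yield $\sum_k s_k<\infty$, i.e. $\{\bm{x}^k\}$ is Cauchy and hence converges to some $\bm{x}^*$. Since $\bm{x}^*\in\Omega\subseteq\mathcal{S}$ by Theorem \ref{thm-subseq}, $\bm{x}^*$ is a stationary point.

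The main obstacle — and the place where the inexactness genuinely bites — is the subgradient/relative-error bound: one must verify that $\Delta^k$ can be \emph{absorbed} rather than merely being small. The bound $\|\Delta^k\|\to0$ from Proposition \ref{pro-bdSC1}(iv) is not by itself enough for the KL summation argument; this is precisely why the theorem additionally hypothesizes $\sum_k\|\Delta^k\|<\infty$, and the proof must carry this summable error through the telescoping without ever needing to divide by $s_k$ (which could vanish faster than $\|\Delta^k\|$). A secondary technical point is ensuring all the local-Lipschitz constants ($\nabla f$, $\nabla P_2$, $\nabla\phi$) are taken on one fixed bounded set containing the tail of $\{\bm{x}^k\}$ and that, by Assumption \ref{assumD}2, the tail of the sequence indeed enters $\mathcal{N}_{\mathcal{S}}$ — which follows since every cluster point is in $\mathcal{S}\subseteq\mathcal{N}_{\mathcal{S}}$ and $\mathcal{N}_{\mathcal{S}}$ is open, so $\mathrm{dist}(\bm{x}^k,\mathcal{S})\to0$ puts $\bm{x}^k\in\mathcal{N}_{\mathcal{S}}$ eventually; one then also needs $\bm{\xi}^k=\nabla P_2(\bm{x}^k)$ for large $k$, which is exactly what Assumption \ref{assumD}2 provides.
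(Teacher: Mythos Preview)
Your proposal is correct and follows essentially the same route as the paper's proof: sufficient decrease from \eqref{suffdes-C1} and strong convexity of $\phi$, a subgradient bound built from \eqref{iBPDCA-inexcond} with $\delta_k=0$ (Assumption \ref{assumE}) plus local Lipschitz continuity of $\nabla f$, $\nabla P_2$, $\nabla\phi$, then the uniformized KL inequality and the standard telescoping summation using $\sum_k\|\Delta^k\|<\infty$. Two cosmetic points: your extra $b'\,s_{k-2}$ term in the subgradient bound is unnecessary (since $\bm{\xi}^k-\bm{\xi}^{k+1}=\nabla P_2(\bm{x}^k)-\nabla P_2(\bm{x}^{k+1})$ is already controlled by $s_k$), and the paper uses the unweighted AM-GM followed by the rearrangement $s_k\leq(\text{telescoping term})+s_{k-1}-s_k$ rather than your small-$\eta$ absorption, but both lead to the same conclusion.
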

\begin{proof}
In view of Theorem \ref{thm-subseq}, we have that $\lim\limits_{k\to\infty}\mathrm{dist}({\bm{x}}^k, \,\Omega)=0$ with $\Omega\subseteq\mathcal{S}$, where $\Omega$ is the set of all cluster points of $\{\bm{x}^{k}\}$ and $\mathcal{S}$ is the set of all stationary points of problem \eqref{DCpro}. Note that, under Assumption \ref{assumD}2, a stationary point as defined in \eqref{defsta} is inherently an $\ell$-stationary point as defined in \eqref{deflimsta}. Thus, we only need to show that the sequence is convergent.

To this end, we first note from $\Omega\subseteq\mathrm{dom}\,P_1\cap\,\mathcal{Q}$, $\Omega\subseteq\mathcal{S}$, and Assumptions \ref{assumB}, \ref{assumC}, \ref{assumD} that, for any $\tilde{\varepsilon}>0$, there exists $K_1>0$ such that $\mathrm{dist}({\bm{x}}^k, \,\Omega)<\tilde{\varepsilon}$ and $\bm{x}^k\in\mathcal{N}_{\phi}\cap\mathcal{N}_f\cap\mathcal{N}_{\mathcal{S}}$ for all $k \geq K_1$. Moreover, since $\Omega$ is compact (due to the boundedness of $\{{\bm{x}}^k\}$), by shrinking $\tilde{\varepsilon}$ if necessary, we may assume without loss of generality that $\nabla f$, $\nabla P_2$ and $\nabla\phi$ are globally Lipschitz continuous on the bounded set $\mathcal{N}_{\tilde{\varepsilon}}:=\big\{\bm{x}\in\mathrm{dom}\,P_1\cap\mathcal{Q}\cap\mathcal{N}_{\phi}\cap\mathcal{N}_f\cap\mathcal{N}_{\mathcal{S}}:\mathrm{dist}(\bm{x}, \,\Omega)<\tilde{\varepsilon}\big\}$.

We next consider the subdifferential of $F$ at ${\bm{x}}^k$ for any $k\geq \max\{K_0,K_1\}+1$, where $K_0$ is given in Assumption \ref{assumE}. Note that, for all $k \geq K_1$, ${\bm{x}}^k\in\mathcal{N}_{\mathcal{S}}$ and $P_2$ is continuously differentiable on $\mathcal{N}_{\mathcal{S}}$ (by Assumption \ref{assumD}2). Then, for all $k \geq \max\{K_0,K_1\}+1$,
\begin{equation*}
\partial F({\bm{x}}^k) = \partial\left(\iota_{\mathcal{Q}}+P_1\right)({\bm{x}}^k) - \nabla P_2({\bm{x}}^k) + \nabla f({\bm{x}}^k).
\end{equation*}
Now, recall from condition \eqref{iBPDCA-inexcond} and Assumption \ref{assumE} that, for any $k \geq \max\{K_0,K_1\}+1$,
\begin{equation*}
\begin{aligned}
\Delta^{k-1}
&\in \partial P_1({\bm{x}}^{k}) - \nabla P_2({\bm{x}}^{k-1}) + \nabla f({\bm{x}}^{k-1}) + \gamma_{k-1}(\nabla \phi(\bm{x}^{k})-\nabla \phi(\bm{x}^{k-1})) \\
&= \partial\iota_{\mathcal{Q}}({\bm{x}}^{k}) + \partial P_1({\bm{x}}^{k})
- \nabla P_2({\bm{x}}^{k-1}) + \nabla f({\bm{x}}^{k-1})
+ \gamma_{k-1}(\nabla \phi(\bm{x}^{k})-\nabla \phi(\bm{x}^{k-1})) \\
&\subseteq \partial\left(\iota_{\mathcal{Q}}+P_1\right)({\bm{x}}^k)
- \nabla P_2({\bm{x}}^{k-1}) + \nabla f({\bm{x}}^{k-1})
+ \gamma_{k-1}(\nabla \phi(\bm{x}^{k})-\nabla \phi(\bm{x}^{k-1})),
\end{aligned}
\end{equation*}
where the equality follows from $\bm{x}^k\in\mathrm{int}\,\mathrm{dom}\,\phi=\mathrm{int}\,\mathcal{Q}$ and hence $\partial\iota_{\mathcal{Q}}({\bm{x}}^{k})=\{0\}$ for all $k\geq0$, and the last inclusion can be easily verified by the definition of the subdifferential of a proper closed convex function (see, for example, \cite[Theorem 23.8]{r1970convex}). Using the above two relations, one can obtain that, for any $k\geq\max\{K_0,K_1\}+1$,
\begin{equation}\label{elemSubdiffF}
\begin{aligned}
&\Delta^{k-1} - (\nabla P_2({\bm{x}}^{k}) - \nabla P_2({\bm{x}}^{k-1})) - \gamma_{k-1}(\nabla \phi(\bm{x}^{k})-\nabla \phi(\bm{x}^{k-1})) + (\nabla f({\bm{x}}^{k}) - \nabla f({\bm{x}}^{k-1})) \\
&~~\in \partial\left(\iota_{\mathcal{Q}}+P_1\right)({\bm{x}}^k) - \nabla P_2({\bm{x}}^{k})
+ \nabla f({\bm{x}}^{k}) = \partial F({\bm{x}}^k).
\end{aligned}
\end{equation}
This, together with $L<\gamma_{\min}\leq\gamma_k\leq\gamma_{\max}<\infty$ for all $k\geq0$, the {global} Lipschitz continuity of $\nabla f$, $\nabla P_2$ and $\nabla \phi$ on $\mathcal{N}_{\tilde{\varepsilon}}$ implies that there exists a constant $a>0$ such that
\begin{equation}\label{subdiffbdSC1}
\mathrm{dist}(0,\,\partial F({\bm{x}}^k)) \leq a\|{\bm{x}}^{k}-{\bm{x}}^{k-1}\| + \|{\Delta^{k-1}}\|, \quad \forall\,k \geq \max\{K_0,K_1\}+1.
\end{equation}

Moreover, we see from \eqref{suffdes-C1} that, for all $k \geq \max\{K_0,K_1\}+1$,
\begin{equation*}
\begin{aligned}
F(\bm{x}^{k+1})
&\leq F(\bm{x}^{k})
- \big((1-\sigma)\gamma_k-L\big)\mathcal{D}_{\phi}(\bm{x}^{k+1},\,\bm{x}^{k})
- \gamma_k\mathcal{D}_{\phi}(\bm{x}^{k},\,\bm{x}^{k+1})  \\
&\leq F(\bm{x}^{k})
- \big((1-\sigma)\gamma_{\min}-L\big)\mathcal{D}_{\phi}(\bm{x}^{k+1},\,\bm{x}^{k})
- \gamma_{\min}\mathcal{D}_{\phi}(\bm{x}^{k},\,\bm{x}^{k+1}) \\
&\leq F(\bm{x}^{k})
- \frac{\big((2-\sigma)\gamma_{\min}-L\big)\mu}{2}\,\|\bm{x}^{k+1} -\bm{x}^{k}\|^2,
\end{aligned}
\end{equation*}
where the last inequality follows from the strong convexity of $\phi$ on $\mathcal{N}_{\phi}$ (by Assumption \ref{assumC}1). For notational simplicity, let $b:=\frac{\left((2-\sigma)\gamma_{\min}-L\right)\mu}{2}$. Then, we have that
\begin{equation}\label{suffides-fSC1}
F(\bm{x}^{k}) - F(\bm{x}^{k+1}) \geq b\|\bm{x}^{k+1} -\bm{x}^{k}\|^2, \quad \forall\,k \geq \max\{K_0,K_1\}+1.
\end{equation}

We are now ready to show that the sequence is convergent. Note from Proposition \ref{pro-fval}(i) that $\zeta:=\lim\limits_{k\to\infty}F({\bm{x}}^k)$ exists. In the following, we will consider two cases.

\textit{Case 1}. Suppose first that $F(\bm{x}^{k_{\zeta}})=\zeta$ for some $k_{\zeta}\geq\max\{K_0,K_1\}+1$. Since $\{F(\bm{x}^k)\}_{k=1}^{\infty}$ is non-increasing (see Proposition \ref{pro-bdSC1}(i)), we must have $F(\bm{x}^{k})\equiv \zeta$ for all $k\geq k_{\zeta}$. This together with \eqref{suffides-fSC1} implies that $\bm{x}^{k_{\zeta}+t}=\bm{x}^{k_{\zeta}}$ for all $t\geq0$, namely, $\{\bm{x}^k\}_{k=0}^{\infty}$ converges finitely.

\textit{Case 2}. We consider the case where $F(\bm{x}^k)>\zeta$ for all $k\geq\max\{K_0,K_1\}+1$. Since $F$ is a KL function and $F\equiv\zeta$ on $\Omega$ (by Proposition \ref{pro-fval}(ii)), we then have from the uniformized KL property (Proposition \ref{uniKL}) that, there exist $\varepsilon>0$, $\eta>0$, and $\varphi\in\Xi_\eta$ such that
\begin{equation*}
\varphi'\left(F(\bm{x})-\zeta\right)\mathrm{dist}\left(0,\,\partial F(\bm{x})\right)\geq1,
\end{equation*}
for all $\bm{x}$ satisfying $\mathrm{dist}(\bm{x},\,\Omega)<\varepsilon$ and $\zeta<F(\bm{x})<\zeta+\eta$. Moreover, since $\lim\limits_{k\to\infty}\mathrm{dist}(\bm{x}^k,\Omega)=0$ and $\{F(\bm{x}^k)\}_{k=1}^{\infty}$ is non-increasing and converges to $\zeta$ (by Proposition \ref{pro-bdSC1}(i)), then for such $\varepsilon$ and $\eta$, there exists $K_2\geq1$ such that $\mathrm{dist}(\bm{x}^k,\,\Omega)<\varepsilon$ and $\zeta<F(\bm{x}^k)<\zeta+\eta$ for all $k\geq K_2$. Thus, we have
\begin{equation}\label{KLSC1}
\varphi'\big(F(\bm{x}^k)-\zeta\big)\mathrm{dist}\big(0,\,\partial F(\bm{x}^k)\big)\geq1, \quad \forall\,k \geq K_2.
\end{equation}
Now, let $K_3:=\max\{K_0,K_1,K_2\}+1$. Then, it holds that, for any $k\geq K_3$,
\begin{equation*}
\begin{aligned}
&\quad \left[\varphi\big(F(\bm{x}^k)-\zeta\big)-\varphi\big(F(\bm{x}^{k+1})-\zeta\big)\right] \mathrm{dist}\big(0,\,\partial F(\bm{x}^k)\big)  \\
&\geq \varphi'\big(F(\bm{x}^k)-\zeta\big)\mathrm{dist}\big(0,\,\partial F(\bm{x}^k)\big)\cdot\big(F(\bm{x}^k)-F(\bm{x}^{k+1})\big)\\
&\geq F(\bm{x}^k)-F(\bm{x}^{k+1})\\
&\geq b\|\bm{x}^{k+1} -\bm{x}^{k}\|^2,
\end{aligned}
\end{equation*}
where the first inequality follows from the concavity of $\varphi$, the second inequality holds from \eqref{KLSC1} and the non-increasing property of $\{F(\bm{x}^k)\}_{k=1}^{\infty}$, and the last inequality holds from \eqref{suffides-fSC1}. Combining the above inequality
and \eqref{subdiffbdSC1}, we further obtain that,
\begin{equation}\label{fklSC1}
\|\bm{x}^{k+1} -\bm{x}^{k}\|^2\leq \frac{a}{b}\left[\varphi\big(F(\bm{x}^k)-\zeta\big)-\varphi\big(F(\bm{x}^{k+1})-\zeta\big)\right]
\left(\|\bm{x}^{k}-\bm{x}^{k-1}\| + \frac{1}{a}\|\Delta^{k-1}\|\right).
\end{equation}
Taking the square root of \eqref{fklSC1} and using the inequality $\sqrt{uv}\leq\frac{u+v}{2}$ for $u,v\geq0$, we see that
\begin{equation*}
\begin{aligned}
\|\bm{x}^{k+1} -\bm{x}^{k}\|&\leq \sqrt{\frac{a}{b}\left[\varphi\big(F(\bm{x}^k)-\zeta\big)-\varphi\big(F(\bm{x}^{k+1})-\zeta\big)\right]
}
\cdot
\sqrt{\|\bm{x}^{k}-\bm{x}^{k-1}\| + \frac{1}{a}\|\Delta^{k-1}\|}  \\
&\leq \frac{a}{2b}\left[\varphi\big(F(\bm{x}^k)-\zeta\big)-\varphi\big(F(\bm{x}^{k+1})-\zeta\big)\right]
+ \frac{1}{2}\|{\bm{x}}^{k}-{\bm{x}}^{k-1}\|
+ \frac{1}{2a}\|\Delta^{k-1}\|,
\end{aligned}
\end{equation*}
which yields
\begin{equation*}
\begin{aligned}
\|\bm{x}^{k+1} -\bm{x}^{k}\|
&\leq \frac{a}{b}\left[\varphi\big(F(\bm{x}^k)-\zeta\big)-\varphi\big(F(\bm{x}^{k+1})-\zeta\big)\right]
+ \|{\bm{x}}^{k}-{\bm{x}}^{k-1}\| - \|\bm{x}^{k+1}-\bm{x}^{k}\|  \\
&\qquad + \frac{1}{a}\|{\Delta^{k-1}}\|.
\end{aligned}
\end{equation*}
Summing the above relation from $k=K_3$ to $\infty$, together with $\sum_{k=0}^{\infty}\|\Delta^k\|<\infty$, we deduce that
\begin{equation*}
\sum_{k=K_3}^{\infty}\|\bm{x}^{k+1} -\bm{x}^{k}\|
\leq \frac{a}{b}\,\varphi\big(F(\bm{x}^{K_3})-\zeta\big)
+ \|\bm{x}^{K_3}-\bm{x}^{K_3-1}\|
+ \frac{1}{a}\sum_{k=K_3}^{\infty}\|\Delta^{k-1}\|
< \infty,
\end{equation*}
which implies that $\sum_{k=0}^{\infty}\|\bm{x}^{k+1}-\bm{x}^{k}\|<\infty$. Therefore, the sequence $\{\bm{x}^{k}\}$ is convergent. This completes the proof.
\end{proof}

We next show that the sequence $\{\bm{x}^k\}$ generated by the iBPDCA with (SC2) is also convergent to an $\ell$-stationary point of problem \eqref{DCpro} under additional appropriate assumptions. The analysis follows a similar line of arguments as presented in Theorem \ref{thm-wholeseqSC1} for the iBPDCA with (SC1), but will utilize the following auxiliary function with $\tau>0$:
\begin{equation*}
H_{\tau}(\bm{u},\,\bm{v}) = F(\bm{u}) + \frac{\tau}{2}\|\bm{u}-\bm{v}\|^2, \quad \forall\,\bm{u},\,\bm{v}\in\mathbb{E}.
\end{equation*}
Hence, the analysis could be more intricate.

\begin{theorem}[\textbf{Global sequential convergence of the iBPDCA with (SC2)}]\label{thm-wholeseqSC2}
Suppose that Assumptions \ref{assumA}, \ref{assumB}, \ref{assumC}, \ref{assumD}, \ref{assumE} hold, $L<\gamma_{\min}\leq\gamma_k\leq\gamma_{\max}<\infty$, $0\leq\sigma<(\gamma_{\min}-L)/\gamma_{\max}$ for all $k\geq0$, and $\sum_{k=0}^{\infty}\|\Delta^k\|<\infty$. Let $\{\bm{x}^k\}$ be the sequence generated by the iBPDCA with (SC2) in Algorithm \ref{alg-iBPDCA}. If $H_{\tau}$ is a KL function for any $\tau>0$, then the sequence $\{\bm{x}^k\}$ converges to an $\ell$-stationary point of problem \eqref{DCpro} for sufficiently small $\sigma$.
\end{theorem}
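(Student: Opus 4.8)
The plan is to mimic the proof of Theorem \ref{thm-wholeseqSC1}, but to work with the auxiliary function $H_{\tau}$ rather than $F$ itself, so that the preceding successive difference term $\sigma\gamma_k\mathcal{D}_{\phi}(\bm{x}^k,\bm{x}^{k-1})$ appearing in (SC2) can be absorbed into a genuine Lyapunov descent. First I would revisit the sufficient descent inequality \eqref{suffdes-C2p}: combining it with the $\mu$-strong convexity of $\phi$ on $\mathcal{N}_{\phi}$ (Assumption \ref{assumC}1) gives, for $k$ large,
\begin{equation*}
F(\bm{x}^{k+1}) - F(\bm{x}^k)
\leq -\tfrac{(\gamma_{\min}-L)\mu}{2}\|\bm{x}^{k+1}-\bm{x}^k\|^2
- \tfrac{\gamma_{\min}\mu}{2}\|\bm{x}^k-\bm{x}^{k+1}\|^2
+ \tfrac{\sigma\gamma_{\max}}{\mu_0}\|\bm{x}^k-\bm{x}^{k-1}\|^2,
\end{equation*}
where I would also need an upper bound $\mathcal{D}_{\phi}(\bm{x}^k,\bm{x}^{k-1})\le \tfrac{\beta}{2}\|\bm{x}^k-\bm{x}^{k-1}\|^2$ coming from the Lipschitz continuity of $\nabla\phi$ on the relevant bounded set (Assumption \ref{assumC}2). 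Choosing $\tau$ in between (specifically $\tau$ comparable to $\sigma\gamma_{\max}\beta$, but strictly larger, and with $\sigma$ small enough that $\tfrac{(\gamma_{\min}-L)\mu}{2} - \tau > 0$), one gets a true descent
\begin{equation*}
H_{\tau}(\bm{x}^{k+1},\bm{x}^k) \leq H_{\tau}(\bm{x}^k,\bm{x}^{k-1}) - b'\|\bm{x}^{k+1}-\bm{x}^k\|^2
\end{equation*}
for some $b'>0$ and all $k\ge \max\{K_0,K_1\}+1$. This is the analogue of \eqref{suffides-fSC1}.

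Next I would establish the subgradient bound for $H_{\tau}$. Since $\partial H_{\tau}(\bm{u},\bm{v}) = \big(\partial F(\bm{u}) + \tau(\bm{u}-\bm{v}),\ \tau(\bm{v}-\bm{u})\big)$, I can reuse \eqref{elemSubdiffF} to produce an element of $\partial F(\bm{x}^k)$ whose norm is at most $a\|\bm{x}^k-\bm{x}^{k-1}\| + \|\Delta^{k-1}\|$ (same argument as for \eqref{subdiffbdSC1}, valid since Assumption \ref{assumE} kills $\delta_k$ and the gradients are globally Lipschitz on $\mathcal{N}_{\tilde\varepsilon}$). Pairing this with the second block $\tau(\bm{x}^{k-1}-\bm{x}^k)$ gives
\begin{equation*}
\mathrm{dist}\big(0,\partial H_{\tau}(\bm{x}^k,\bm{x}^{k-1})\big) \leq (a+\tau)\|\bm{x}^k-\bm{x}^{k-1}\| + \|\Delta^{k-1}\|,
\end{equation*}
which is the analogue of \eqref{subdiffbdSC1}. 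I would also need the value/cluster-point facts for $H_{\tau}$: using Proposition \ref{pro-bdSC2}(iii) ($\mathcal{D}_{\phi}(\bm{x}^k,\bm{x}^{k+1})\to0$, hence $\|\bm{x}^k-\bm{x}^{k+1}\|\to0$), the set of cluster points of $\{(\bm{x}^k,\bm{x}^{k-1})\}$ is $\{(\bm{x}^*,\bm{x}^*):\bm{x}^*\in\Omega\}$, and on this set $H_{\tau}(\bm{x}^*,\bm{x}^*)=F(\bm{x}^*)=\zeta$ by Proposition \ref{pro-fval}(ii), so $H_{\tau}$ is constant there; and $\lim_k H_{\tau}(\bm{x}^k,\bm{x}^{k-1})=\zeta$ by Proposition \ref{pro-bdSC2}(i) together with Proposition \ref{pro-fval}(i).

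With these three ingredients in hand, the remainder runs exactly as in Theorem \ref{thm-wholeseqSC1}: split into Case 1 (finite termination, handled by the $H_{\tau}$-descent when $H_{\tau}$ hits its limit value) and Case 2 (apply the uniformized KL property of Proposition \ref{uniKL} to $H_{\tau}$ on the compact set $\Gamma=\{(\bm{x}^*,\bm{x}^*):\bm{x}^*\in\Omega\}$, use concavity of $\varphi$, the descent, and the subgradient bound to get a telescoping inequality of the form
\begin{equation*}
\|\bm{x}^{k+1}-\bm{x}^k\| \leq \tfrac{a+\tau}{b'}\big[\varphi(H_{\tau}^k-\zeta)-\varphi(H_{\tau}^{k+1}-\zeta)\big] + \|\bm{x}^k-\bm{x}^{k-1}\| - \|\bm{x}^{k+1}-\bm{x}^k\| + \tfrac{1}{a+\tau}\|\Delta^{k-1}\|,
\end{equation*}
then sum over $k$, invoke $\sum_k\|\Delta^k\|<\infty$, and conclude $\sum_k\|\bm{x}^{k+1}-\bm{x}^k\|<\infty$). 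Convergence of $\{\bm{x}^k\}$ to a point, which is then stationary by Theorem \ref{thm-subseq}, follows. The main obstacle I anticipate is the bookkeeping around the ``sufficiently small $\sigma$'' condition: one must simultaneously arrange $\sigma<(\gamma_{\min}-L)/\gamma_{\max}$ (needed for Proposition \ref{pro-bdSC2}), pick $\tau$ with $\sigma\gamma_{\max}\beta \le \tau$ yet $\tau < (\gamma_{\min}-L)\mu/2$ so that $b'>0$, and verify that the local Lipschitz constant $\beta$ of $\nabla\phi$ used in the $H_{\tau}$-descent is the same one governing the bounded set $\mathcal{N}_{\tilde\varepsilon}$ from the subgradient bound — so a little care is needed to choose $\tilde\varepsilon$ (and hence $\beta$) first, independently of the eventual $\tau$. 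Everything else is a routine transcription of the (SC1) argument with $F$ replaced by $H_{\tau}$.
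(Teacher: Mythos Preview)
Your proposal is correct and follows essentially the same route as the paper: the paper also works with $H_{\tau}$, derives the descent \eqref{suffides-fSC2} from \eqref{suffdes-C2p} via strong convexity of $\phi$ and Lipschitz continuity of $\nabla\phi$ on $\mathcal{N}_{\tilde{\varepsilon}}$ (setting $\tau=\sigma\gamma_{\max}\ell_{\phi}$ exactly rather than strictly larger), obtains the subgradient bound from \eqref{elemSubdiffF}, identifies the cluster set of $\{(\bm{x}^k,\bm{x}^{k-1})\}$ as $\{(\bm{x},\bm{x}):\bm{x}\in\Omega\}$ on which $H_{\tau}\equiv\zeta$, and then runs the standard uniformized-KL telescoping argument. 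Your anticipated bookkeeping concern (fixing $\tilde{\varepsilon}$ and hence the Lipschitz constant of $\nabla\phi$ before choosing $\tau$) is exactly how the paper proceeds; the only cosmetic differences are your slightly conservative constraint $\tau<(\gamma_{\min}-L)\mu/2$ (the paper uses $\tau<(2\gamma_{\min}-L)\mu$) and a stray $\mu_0$ in your first display that should be your $\beta$.
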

\begin{proof}
In view of Theorem \ref{thm-subseq}, we have that $\lim\limits_{k\to\infty}\mathrm{dist}({\bm{x}}^k, \,\Omega)=0$ with $\Omega\subseteq\mathcal{S}$, where $\Omega$ is the set of all cluster points of $\{\bm{x}^{k}\}$ and $\mathcal{S}$ is the set of all stationary points of problem \eqref{DCpro}. Note that, under Assumption \ref{assumD}2, a stationary point as defined in \eqref{defsta} is inherently an $\ell$-stationary point as defined in \eqref{deflimsta}. Thus, we only need to show that the sequence is convergent.

To this end, we first note from $\Omega\subseteq\mathrm{dom}\,P_1\cap\,\mathcal{Q}$, $\Omega\subseteq\mathcal{S}$, and Assumptions \ref{assumB}, \ref{assumC}, \ref{assumD} that, for any $\tilde{\varepsilon}>0$, there exists $K_1>0$ such that $\mathrm{dist}({\bm{x}}^k, \,\Omega)<\tilde{\varepsilon}$ and $\bm{x}^k\in\mathcal{N}_{\phi}\cap\mathcal{N}_f\cap\mathcal{N}_{\mathcal{S}}$ for all $k \geq K_1$. Moreover, since $\Omega$ is compact (due to the boundedness of $\{{\bm{x}}^k\}$), by shrinking $\tilde{\varepsilon}$ if necessary, we may assume without loss of generality that $\nabla f$, $\nabla P_2$ and $\nabla\phi$ are Lipschitz continuous on the bounded set $\mathcal{N}_{\tilde{\varepsilon}}:=\big\{\bm{x}\in\mathrm{dom}\,P_1\cap\mathcal{Q}\cap\mathcal{N}_{\phi}\cap\mathcal{N}_f\cap\mathcal{N}_{\mathcal{S}}:\mathrm{dist}(\bm{x}, \,\Omega)<\tilde{\varepsilon}\big\}$. Let $\ell_{\phi}$ be the Lipschitz constant of $\nabla\phi$ on $\mathcal{N}_{\tilde{\varepsilon}}$.

Next, we see from \eqref{suffdes-C2p} that, for all $k \geq K_1$,
\begin{equation*}
\begin{aligned}
&\quad F(\bm{x}^{k+1}) - F(\bm{x}^{k}) \\
&\leq
- \big(\gamma_k-L\big)\mathcal{D}_{\phi}(\bm{x}^{k+1},\,\bm{x}^{k})
- \gamma_k\mathcal{D}_{\phi}(\bm{x}^{k},\,\bm{x}^{k+1})
+ \sigma\gamma_k\mathcal{D}_{\phi}(\bm{x}^{k},\,\bm{x}^{k-1})  \\
&\leq
- \big(\gamma_{\min}-L\big)\mathcal{D}_{\phi}(\bm{x}^{k+1},\,\bm{x}^{k})
- \gamma_{\min}\mathcal{D}_{\phi}(\bm{x}^{k},\,\bm{x}^{k+1})
+ \sigma\gamma_{\max}\mathcal{D}_{\phi}(\bm{x}^{k},\,\bm{x}^{k-1})  \\
&\leq
- \frac{\big(2\gamma_{\min}-L\big)\mu}{2}\,\|\bm{x}^{k+1}-\bm{x}^{k}\|^2
+ \frac{\sigma\gamma_{\max}\ell_{\phi}}{2}\,\|\bm{x}^{k}-\bm{x}^{k-1}\|^2 \\
&\leq \frac{\sigma\gamma_{\max}\ell_{\phi}}{2}\,\|\bm{x}^{k}-\bm{x}^{k-1}\|^2
- \frac{\sigma\gamma_{\max}\ell_{\phi}}{2}\,\|\bm{x}^{k+1}-\bm{x}^{k}\|^2
- \frac{\big(2\gamma_{\min}-L\big)\mu-\sigma\gamma_{\max}\ell_{\phi}}{2}
\,\|\bm{x}^{k+1}-\bm{x}^{k}\|^2,
\end{aligned}
\end{equation*}
where the second last inequality follows from the strong convexity of $\phi$ on $\mathcal{N}_{\phi}$ (by Assumption \ref{assumC}1) and the Lipschitz continuity of $\nabla\phi$ on $\mathcal{N}_{\tilde{\varepsilon}}$. Since $\gamma_{\min}>L$, we must have $\big(2\gamma_{\min}-L\big)\mu-\sigma\gamma_{\max}\ell_{\phi}>0$ for sufficiently small $\sigma$. Let $\tau:=\sigma\gamma_{\max}\ell_{\phi}$ and $b:=\frac{\left(2\gamma_{\min}-L\right)\mu-\sigma\gamma_{\max}\ell_{\phi}}{2}$ for notational simplicity. Then, for sufficiently small $\sigma$, we have that
\begin{equation}\label{suffides-fSC2}
H_{\tau}(\bm{x}^{k},\bm{x}^{k-1}) - H_{\tau}(\bm{x}^{k+1},\bm{x}^k)
\geq b\|\bm{x}^{k+1} -\bm{x}^{k}\|^2, \quad \forall\,k\geq K_1.
\end{equation}
This implies that, for sufficiently small $\sigma$, the squence $\left\{H_{\tau}({\bm{x}}^k,{\bm{x}}^{k-1})\right\}$ is non-increasing after a finite number of iterations. Moreover, since $F$ is level-bounded (by Assumption \ref{assumA}4), we have that $F^* := \mathrm{\inf}\big\{F(\bm{x})\,|\,\bm{x}\in\mathcal{Q}\big\}>-\infty$ and hence $\big\{F(\bm{x}^{k})\big\}_{k=0}^{\infty}$ is bounded from below. This together with the nonnegativity of $\frac{\tau}{2}\|\bm{x}^{k}-\bm{x}^{k-1}\|^2$ implies that  $\left\{H_{\tau}({\bm{x}}^k,{\bm{x}}^{k-1})\right\}$ is also bounded from below. And it follows from the strong convexity of $\phi$ on $\mathcal{N}_{\phi}$ (by Assumption \ref{assumC}1) and Proposition \ref{pro-bdSC2}(iii) that $\bm{x}^k-\bm{x}^{k-1}\to0$. Then, combining with Proposition \ref{pro-fval}(i), we can conclude that $\lim\limits_{k\to\infty}H_{\tau}({\bm{x}}^k,{\bm{x}}^{k-})=\lim\limits_{k\to\infty}F({\bm{x}}^k)+\lim\limits_{k\to\infty}\frac{\tau}{2}\|{\bm{x}}^k-\bm{x}^{k-1}\|^2=\lim\limits_{k\to\infty}F({\bm{x}}^k)=\zeta$. In the following, we consider two cases.

\textit{Case 1}. Suppose first that $H_{\tau}(\bm{x}^{k_{\zeta}},\,\bm{x}^{k_{\zeta}-1})=\zeta$ for some $k_{\zeta}\geq K_1+1$. Since $\{H_{\tau}(\bm{x}^k,\bm{x}^{k-1})\}$ is non-increasing for all $k\geq K_1$, we must have $H_{\tau}(\bm{x}^{k},\bm{x}^{k-1}) = \zeta$ for all $k\geq k_{\zeta}$. This together with \eqref{suffides-fSC2} implies that $\bm{x}^{k_{\zeta}+t}=\bm{x}^{k_{\zeta}}$ for all $t\geq0$, namely, $\{\bm{x}^k\}_{k=0}^{\infty}$ converges finitely.

\textit{Case 2}. We consider the case where $H_{\tau}(\bm{x}^k,\bm{x}^{k-1})>\zeta$ for all $k\geq\max\{K_0,K_1\}+1$. In this case, we will divide the proof into three steps: (1) we first prove that $H_{\tau}$ is constant on the set of cluster points of the sequence $\{(\bm{x}^k,\bm{x}^{k-1})\}$ and then apply the uniformized KL property (Proposition \ref{uniKL}); (2) we bound the distance from 0 to $\partial H_{\tau}(\bm{x}^k, \bm{x}^{k-1})$; (3) we show that $\{\bm{x}^k\}$ is a Cauchy sequence and hence is convergent. The complete proof is presented as follows.

\textit{Step 1}. In view of Proposition \ref{pro-bdSC2}(iii) and the strong convexity of $\phi$ on $\mathcal{N}_{\phi}$ (by Assumption \ref{assumC}1), we have that $\|\bm{x}^{k+1}-\bm{x}^k\|\to 0$. Then, it is easy to see that the set of cluster points of $\{(\bm{x}^k,\bm{x}^{k-1})\}$ is $\Lambda:=\{(\bm{x},\bm{x}): \bm{x}\in \Omega\}$. Moreover, for any $(\widehat{\bm{x}},\widehat{\bm{x}})\in\Lambda$ (and hence $\widehat{\bm{x}}\in\Omega$), we have $H(\widehat{\bm{x}},\widehat{\bm{x}}) = F(\widehat{\bm{x}})$. Thus, we can conclude from Proposition \ref{pro-fval}(ii) that $H_{\tau}\equiv\zeta$
on $\Lambda$. This fact together with our assumption that $H_{\tau}$ is a KL function for any $\tau>0$ and the uniformized KL property (Proposition \ref{uniKL}) implies that, there exist $\varepsilon>0$, $\eta>0$, and $\varphi\in\Xi_\eta$ such that
\begin{equation*}
\varphi'\left(H_{\tau}(\bm{u},\bm{v})-\zeta\right)\mathrm{dist}\left(0,\,\partial H_{\tau}(\bm{u},\bm{v})\right)\geq1,
\end{equation*}
for all $(\bm{u},\bm{v})$ satisfying $\mathrm{dist}((\bm{u},\bm{v}),\,\Lambda)<\varepsilon$ and $\zeta<H_{\tau}(\bm{u},\bm{v})<\zeta+\eta$. On the other hand, since $\lim\limits_{k\to\infty}\mathrm{dist}((\bm{x}^k,\bm{x}^{k-1}),\Lambda)=0$ and $\left\{H_{\tau}({\bm{x}}^k,{\bm{x}}^{k-1})\right\}$ is non-increasing after $K_1$ iterations and converges to $\zeta$, then for such $\varepsilon$ and $\eta$, there exists $K_2\geq1$ such that $\mathrm{dist}((\bm{x}^k,\bm{x}^{k-1}),\,\Lambda)<\varepsilon$ and $\zeta<H_{\tau}({\bm{x}}^k,{\bm{x}}^{k-1})<\zeta+\eta$ for all $k\geq K_2$. Thus, we have
\begin{equation}\label{KLSC2}
\varphi'\big(H_{\tau}({\bm{x}}^k,{\bm{x}}^{k-1})-\zeta\big)\mathrm{dist}\big(0,\, \partial H_{\tau}({\bm{x}}^k,{\bm{x}}^{k-1})\big)\geq1, \quad \forall\,k \geq K_2.
\end{equation}

\textit{Step 2}. We consider the subdifferential of $H_{\tau}$ at $(\bm{x}^k, \bm{x}^{k-1})$ for any $k\geq \max\{K_0,K_1,K_2\}+1$, where $K_0$ is given in Assumption \ref{assumE}. Note that, for all $k \geq K_1$, ${\bm{x}}^k\in\mathcal{N}_{\mathcal{S}}$ and $P_2$ is continuously differentiable on $\mathcal{N}_{\mathcal{S}}$ (by Assumption \ref{assumD}2). Then, for all $k \geq \max\{K_0,K_1,K_2\}+1$,
\begin{equation*}
\begin{aligned}
\partial H_{\tau}(\bm{x}^k,\bm{x}^{k-1})
&=\left[\partial F(\bm{x}^k) + \tau(\bm{x}^k-\bm{x}^{k-1}),
\,-\tau(\bm{x}^k-\bm{x}^{k-1})\right] \\
&\ni\left[\Delta^{k-1} - \big(\nabla P_2({\bm{x}}^{k}) - \nabla P_2({\bm{x}}^{k-1})\big)
- \gamma_{k-1}\big(\nabla \phi(\bm{x}^{k})-\nabla \phi(\bm{x}^{k-1})\big) \right. \\
&\qquad \left. + \big(\nabla f({\bm{x}}^{k}) - \nabla f({\bm{x}}^{k-1})\big)
+ \tau(\bm{x}^k-\bm{x}^{k-1}), \,-\tau(\bm{x}^k-\bm{x}^{k-1})\right],
\end{aligned}
\end{equation*}
where the inclusion follows from \eqref{elemSubdiffF}. This, together with $L<\gamma_{\min}\leq\gamma_k\leq\gamma_{\max}<\infty$ for all $k\geq0$, the {global} Lipschitz continuity of $\nabla f$, $\nabla P_2$ and $\nabla \phi$ on $\mathcal{N}_{\tilde{\varepsilon}}$ implies that there exists a constant $a>0$ such that
\begin{equation}\label{subdiffbdSC2}
\mathrm{dist}(0,\,\partial H_{\tau}(\bm{x}^k,\bm{x}^{k-1})) \leq a\|{\bm{x}}^{k}-{\bm{x}}^{k-1}\| + \|{\Delta^{k-1}}\|, \quad \forall\,k \geq \max\{K_0,K_1,K_2\}+1.
\end{equation}

\textit{Step 3}. We now prove the convergence of the sequence. Let $K_3:=\max\{K_0,K_1,K_2\}+1$. Then, we have that, for any $k\geq K_3$,
\begin{equation*}
\begin{aligned}
&\quad \left[\varphi\big(H_{\tau}(\bm{x}^k,\bm{x}^{k-1})-\zeta\big)
-\varphi\big(H_{\tau}(\bm{x}^{k+1},\bm{x}^{k})-\zeta\big)\right] \mathrm{dist}\big(0,\,\partial H_{\tau}(\bm{x}^k,\bm{x}^{k-1})\big)  \\
&\geq \varphi'\big(H_{\tau}(\bm{x}^k,\bm{x}^{k-1})-\zeta\big)\mathrm{dist}\big(0,\,\partial H_{\tau}(\bm{x}^k,\bm{x}^{k-1})\big)
\cdot\big(H_{\tau}(\bm{x}^k,\bm{x}^{k-1})
-H_{\tau}(\bm{x}^{k+1},\bm{x}^{k})\big)  \\
&\geq H_{\tau}(\bm{x}^k,\bm{x}^{k-1})
-H_{\tau}(\bm{x}^{k+1},\bm{x}^{k})  \\
&\geq b\|\bm{x}^{k+1} -\bm{x}^{k}\|^2,
\end{aligned}
\end{equation*}
where the first inequality follows from the concavity of $\varphi$, the second inequality holds from \eqref{KLSC2} and the fact that $\left\{H_{\tau}({\bm{x}}^k,{\bm{x}}^{k-1})\right\}$ is non-increasing after $K_1$ iterations, and the last inequality holds from \eqref{suffides-fSC2}. Combining the above inequality and \eqref{subdiffbdSC2}, we further obtain that,
\begin{equation}\label{fklSC2}
\begin{aligned}
&\quad \|\bm{x}^{k+1} -\bm{x}^{k}\|^2 \\
&\leq \frac{a}{b}\left[\varphi\big(H_{\tau}(\bm{x}^k,\bm{x}^{k-1})-\zeta\big)
-\varphi\big(H_{\tau}(\bm{x}^{k+1},\bm{x}^{k})-\zeta\big)\right]
\left(\|\bm{x}^{k}-\bm{x}^{k-1}\| + \frac{1}{a}\|\Delta^{k-1}\|\right).
\end{aligned}
\end{equation}
Taking the square root of \eqref{fklSC2} and using the inequality $\sqrt{uv}\leq\frac{u+v}{2}$ for $u,v\geq0$, we see that
\begin{equation*}
\begin{aligned}
\|\bm{x}^{k+1} -\bm{x}^{k}\|&\leq \sqrt{\frac{a}{b}\left[\varphi\big(H_{\tau}(\bm{x}^k,\bm{x}^{k-1})-\zeta\big)
-\varphi\big(H_{\tau}(\bm{x}^{k+1},\bm{x}^{k})-\zeta\big)\right]
}
\cdot
\sqrt{\|\bm{x}^{k}-\bm{x}^{k-1}\| + \frac{1}{a}\|\Delta^{k-1}\|}  \\
&\leq \frac{a}{2b}\left[\varphi\big(H_{\tau}(\bm{x}^k,\bm{x}^{k-1})-\zeta\big)
-\varphi\big(H_{\tau}(\bm{x}^{k+1},\bm{x}^{k})-\zeta\big)\right]
+ \frac{1}{2}\|{\bm{x}}^{k}-{\bm{x}}^{k-1}\|
+ \frac{1}{2a}\|\Delta^{k-1}\|,
\end{aligned}
\end{equation*}
which yields
\begin{equation*}
\begin{aligned}
\|\bm{x}^{k+1} -\bm{x}^{k}\|
&\leq \frac{a}{b}\left[\varphi\big(H_{\tau}(\bm{x}^k,\bm{x}^{k-1})-\zeta\big)
-\varphi\big(H_{\tau}(\bm{x}^{k+1},\bm{x}^{k})-\zeta\big)\right]  \\
&\qquad + \|{\bm{x}}^{k}-{\bm{x}}^{k-1}\|
- \|\bm{x}^{k+1}-\bm{x}^{k}\|
+ \frac{1}{a}\|{\Delta^{k-1}}\|,
\end{aligned}
\end{equation*}
Summing the above relation from $k=K_3$ to $\infty$, together with $\sum_{k=0}^{\infty}\|\Delta^k\|<\infty$, we have that
\begin{equation*}
\sum_{k=K_3}^{\infty}\|\bm{x}^{k+1} -\bm{x}^{k}\|
\leq \frac{a}{b}\,\varphi\big(H_{\tau}(\bm{x}^{K_3}, \bm{x}^{K_3-1})-\zeta\big)
+ \|\bm{x}^{K_3}-\bm{x}^{K_3-1}\|
+ \frac{1}{a}\sum_{k=K_3}^{\infty}\|\Delta^{k-1}\|
< \infty,
\end{equation*}
which implies that $\sum_{k=0}^{\infty}\|\bm{x}^{k+1}-\bm{x}^{k}\|<\infty$. Therefore, the sequence $\{\bm{x}^{k}\}$ is convergent. This completes the proof.
\end{proof}

Before closing this section, we would like to make some remarks regarding the global sequential convergence results. First, one may have noticed that our current analysis relies on the summable error condition $\sum_{k=0}^{\infty}\|\Delta^k\|<\infty$ to establish strong convergence results in Theorems \ref{thm-wholeseqSC1} and \ref{thm-wholeseqSC2}.
Fortunately, it has been shown that $\sum^{\infty}_{k=0}\mathcal{D}_{\phi}(\bm{x}^{k+1}, \,\bm{x}^{k})<\infty$ in \eqref{summaC1-p} and \eqref{summaC2-p} for (SC1) and (SC2), respectively. Thus, one could also verify that $\|\Delta^k\|\leq\sigma\gamma_k\mathcal{D}_{\phi}(\bm{x}^{k+1}, \,\bm{x}^{k})$ along with (SC1) or verify that $\|\Delta^k\|\leq\sigma\gamma_k\mathcal{D}_{\phi}(\bm{x}^{k}, \,\bm{x}^{k-1})$ along with (SC2) to ensure $\sum_{k=0}^{\infty}\|\Delta^k\|<\infty$. Interestingly, our numerical experiments show that it is usually sufficient to \textit{solely} employ our stopping criterion (SC1) or (SC2) for obtaining satisfactory empirical performance. Therefore, in our experiments, we choose not to verify the additional inequality to ensure $\sum_{k=0}^{\infty}\|\Delta^k\|<\infty$ for the sake of simplicity. Of course, eliminating the need for such a summable error condition in theory remains an interesting topic for future research. Second, a similar auxiliary function in the form of $H_{\tau}$ has also been used in, for example, \cite{wcp2018proximal,y2024proximal}, for developing the global sequential convergence under the KL property. When $F$ is semialgebraic, $H_{\tau}$ with any $\tau>0$ is also semialgebraic and hence is a KL function; see, for example,
\cite[Section 4.3]{abrs2010proximal} and
\cite[Section 2]{bdl2007the}. More discussions can be found in \cite[Remark 4.2]{wcp2018proximal}.

\section{Numerical experiments}\label{sec-num}

In this section, we conduct some numerical experiments to evaluate the performance of our iBPDCA in Algorithm \ref{alg-iBPDCA} for solving the $\ell_{1-2}$ regularized least squares problem \eqref{probleml12} and the constrained $\ell_{1-2}$ sparse optimization problem \eqref{probleml12con}. All experiments are run in {\sc Matlab} R2023a on a PC with Intel processor i7-12700K@3.60GHz (with 12 cores and 20 threads) and 64GB of RAM, equipped with a Windows OS.

\subsection{The $\ell_{1-2}$ regularized least squares problem}\label{sec-num-l12reg}

In this subsection, we consider the $\ell_{1-2}$ regularized least squares problem (see, e.g., \cite{lyhx2015computing,wcp2018proximal,ylhx2015minimization}):
\begin{equation}\label{probleml12}
\min\limits_{\bm{x}\in\mathbb{R}^n}~ F_{\text{reg}}(\bm{x}):=\frac{1}{2}\|A\bm{x}-\bm{b}\|^2 + \lambda\big( \|\bm{x}\|_1 - \|\bm{x}\| \big),
\end{equation}
where $A\in\mathbb{R}^{m\times n}$, $\bm{b}\in\mathbb{R}^m$, and $\lambda>0$ is the regularization parameter. To apply our iBPDCA in Algorithm \ref{alg-iBPDCA} for solving problem \eqref{probleml12}, we consider the following equivalent reformulation:
\begin{equation*}
\min\limits_{\bm{x}\in\mathcal{Q}:=\mathbb{R}^n}~
\underbrace{\frac{1}{2}\|A\bm{x}-\bm{b}\|^2 + \lambda\|\bm{x}\|_1}_{P_1(\bm{x})} - \underbrace{\lambda\|\bm{x}\|}_{P_2(\bm{x})} + \underbrace{0}_{f(\bm{x})},
\end{equation*}
and choose the quadratic kernel function $\phi(\bm{x}):=\frac{1}{2}\|\bm{x}\|^2$. It is easy to verify that, with such choices, $(f,\,\phi)$ is 0-smooth adaptable restricted on $\mathbb{R}^n$. Moreover, we assume that the $A$ in \eqref{probleml12} does not have zero columns so that $F_{\text{reg}}$ is level-bounded; see \cite[Example 4.1(b)]{lp2017further} and \cite[Lemma 3.1]{ylhx2015minimization}. Thus, our iBPDCA is applicable and the associated subproblem at the $k$-th iteration ($k\geq0$) takes the following form:
\begin{equation}\label{subprob_l12reg}
\min\limits_{\bm{x}\in\mathbb{R}^n}~ \lambda\|\bm{x}\|_1 - \langle\bm{\xi}^k,\,\bm{x}-\bm{x}^k\rangle + \frac{1}{2}\|A\bm{x}-\bm{b}\|^2+\frac{\gamma_k}{2}\|\bm{x}-\bm{x}^k\|^2,
\end{equation}
where $\bm{\xi}^k\in\partial P_2(\bm{x}^k)$.

\subsubsection{A dual semi-smooth Newton method for solving the subproblem \eqref{subprob_l12reg}}\label{sec_l12reg_ssn}

We next discuss how to efficiently solve \eqref{subprob_l12reg} via a dual semi-smooth Newton ({\sc Ssn}) method to find a point $\bm{x}^{k+1}$ associated with an error pair $(\Delta^k,\,\delta_k)$ satisfying the relative stopping criterion (SC1) or (SC2). Specifically, we attempt to solve the following equivalent problem:
\begin{equation}\label{subprobref_l12reg}
\begin{aligned}
&\min\limits_{\bm{x}\in\mathbb{R}^n,\,\bm{y}\in\mathbb{R}^m}~\lambda\|\bm{x}\|_1
- \langle\bm{\xi}^k,\,\bm{x}\rangle+\frac{1}{2}\|\bm{y}-\bm{b}\|^2
+\frac{\gamma_k}{2}\|\bm{x}-\bm{x}^k\|^2 \\
&\quad~~\,\mathrm{s.t.}\qquad A\bm{x} = \bm{y}.
\end{aligned}
\end{equation}
By some manipulations, one can show that the dual problem of \eqref{subprobref_l12reg} can be equivalently given by (in a minimization form)
\begin{equation}\label{subprobdual_l12reg}
\hspace{-2mm}
\min\limits_{\bm{z}\in\mathbb{R}^m}
\left\{\begin{aligned}
&\Psi_k^{\text{reg}}(\bm{z}):=
\frac{1}{2}\|\bm{z}\|^2 + \langle\bm{z},\,\bm{b}\rangle
-\lambda\left\|\texttt{prox}_{\lambda\gamma_k^{-1}\|\cdot\|_1}\!
\left(\bm{v}_k(\bm{z})\right)\right\|_1 \\[3pt]
&~~-\frac{\gamma_k}{2} \left\|\texttt{prox}_{\lambda\gamma_k^{-1}\|\cdot\|_1}\!
\left(\bm{v}_k(\bm{z})\right)
- \bm{v}_k(\bm{z})\right\|^2
+ \frac{\gamma_k}{2}\left\|\bm{v}_k(\bm{z})\right\|^2
- \frac{\gamma_k}{2} \|\bm{x}^k\|^2
\end{aligned}\right\},
\end{equation}
where $\bm{z}\in\mathbb{R}^m$ is the dual variable and
$\bm{v}_k(\bm{z}) := \gamma_k^{-1}\bm{\xi}^k+\bm{x}^k
-\gamma_k^{-1}A^{\top}\bm{z}$. The detailed derivation of the dual problem \eqref{subprobdual_l12reg} is relegated to Appendix \ref{apd-subprobdual_l12reg}. From the property of the Moreau envelope of $\lambda\gamma_k^{-1}\|\cdot\|_1$ (see, e.g.,
\cite[Proposition 12.29]{bc2011convex}), we see that $\Psi_k^{\text{reg}}$ is strongly convex and continuously differentiable with the gradient
\begin{equation*}
\nabla\Psi_k^{\text{reg}}(\bm{z})
= - A\texttt{prox}_{\lambda\gamma_k^{-1}\|\cdot\|_1}
\!\left(\bm{v}_k(\bm{z})\right)
+ \bm{z} + \bm{b}.
\end{equation*}
Thus, the optimal solution of problem \eqref{subprobdual_l12reg} can be readily obtained by solving the following nonlinear equation:
\begin{equation}\label{subdualequa_l12reg}
\nabla \Psi_k^{\text{reg}}(\bm{z})=0.
\end{equation}
In view of the nice properties of $\texttt{prox}_{\lambda\gamma_k^{-1}\|\cdot\|_1}$, we then follow \cite{lst2018highly,twst2020sparse,zts2020learning} to apply a globally convergent and locally superlinearly convergent {\sc Ssn} method to solve \eqref{subdualequa_l12reg}. To this end, we define a multifunction $\widehat{\partial}^2 \Psi_k^{\text{reg}}:\mathbb{R}^m \rightrightarrows \mathbb{R}^{m \times m}$ as follows:
\begin{equation*}
\widehat{\partial}^2\Psi_k^{\text{reg}}(\bm{z}) := I + \gamma_k^{-1}A\partial\texttt{prox}_{\lambda\gamma_k^{-1}\|\cdot\|_1}\!
\left(\bm{v}_k(\bm{z})\right)A^{\top},
\end{equation*}
where $\partial\texttt{prox}_{\lambda\gamma_k^{-1}\|\cdot\|_1}\!
\left(\bm{v}_k(\bm{z})\right)$ is the Clarke subdifferential of the Lipschitz continuous mapping $\texttt{prox}_{\lambda\gamma_k^{-1}\|\cdot\|_1}(\cdot)$ at $\bm{v}_k(\bm{z})$, defined as follows
\begin{equation*}
\partial\texttt{prox}_{\alpha^{-1}\|\cdot\|_1}(\bm{u})
:= \left\{\mathrm{Diag}(\bm{d}) \,:\, \bm{d}\in\mathbb{R}^n, ~d_i\in
\left\{\begin{aligned}
&\{1\}, && \mathrm{if}~~|u_i|>\alpha^{-1}, \\
&[0,\,1], && \mathrm{if}~~|u_i|=\alpha^{-1}, \\
&\{0\}, && \mathrm{if}~~|u_i|<\alpha^{-1},
\end{aligned}\right.~~\right\}.
\end{equation*}
It is clear that all elements in $\widehat{\partial}^2\Psi_k^{\text{reg}}(\bm{z})$ are positive definite. We are now ready to present the {\sc Ssn} method for solving equation \eqref{subdualequa_l12reg} in Algorithm \ref{algo:SSN} and refer readers to \cite[Theorem 3.6]{lst2018highly} for its convergence results.

\begin{algorithm}[htb!]
\caption{A semi-smooth Newton ({\sc Ssn}) method for solving equation \eqref{subdualequa_l12reg}}\label{algo:SSN}
 	
\textbf{Initialization:} Choose $\bar{\eta}\in(0,1)$, $\gamma\in(0,1]$, $\mu\in(0,1/2)$, $\delta\in(0,1)$, and an initial point $\bm{z}^{k,0}\in\mathbb{R}^m$. Set $t=0$. Repeat until a termination criterion is met. \vspace{-1mm}
\begin{itemize}[leftmargin=1.6cm]
\item[\textbf{Step 1.}] Compute $\nabla\Psi_k^{\text{reg}}(\bm{z}^{k,t})$ and select an element $H^{k,t}\in\widehat{\partial}^2\Psi_k^{\text{reg}}(\bm{z}^{k,t})$. Solve the linear system $H^{k,t}\bm{d} = -\nabla\Psi_k^{\text{reg}}(\bm{z}^{k,t})$ nearly exactly by the (sparse) Cholesky factorization with forward and backward substitutions, \textit{or} approximately by the preconditioned conjugate gradient method to find $\bm{d}^{k,t}$ such that $\big\|H^{k,t}\bm{d}^{k,t} + \nabla\Psi_k^{\text{reg}}(\bm{z}^{k,t})\big\|
    \leq \min\big(\bar{\eta}, \,\|\nabla\Psi_k^{\text{reg}}(\bm{z}^{k,t})\|^{1+\gamma}\big)$.
	
\item[\textbf{Step 2.}] (\textbf{Inexact line search}) Find a step size $\alpha_t:=\delta^{i_t}$, where $i_t$ is the smallest nonnegative integer $i$ for which $\Psi_k^{\text{reg}}(\bm{z}^{k,t} + \delta^i\bm{d}^{k,t})
    \leq \Psi_k^{\text{reg}}(\bm{z}^{k,t}) + \mu \delta^{i}\langle\nabla\Psi_k^{\text{reg}}(\bm{z}^{k,t}), \,\bm{d}^{k,t}\rangle$.

\item[\textbf{Step 3.}] Set $\bm{z}^{k,t+1} = \bm{z}^{k,t} + \alpha_t\bm{d}^{k,t}$, $t=t+1$, and go to \textbf{Step 1}.
\end{itemize}
\end{algorithm}

We next show that our inexact stopping criteria (SC1) and (SC2) can be achieved through some appropriate manipulations based on the dual sequence generated by the {\sc Ssn} method. Specifically, at the $k$-th iteration, we apply the {\sc Ssn} method for solving equation \eqref{subdualequa_l12reg}, which generates a dual sequence $\{\bm{z}^{k,t}\}$. Let
\begin{equation*}
\bm{w}^{k,t}:=\texttt{prox}_{\lambda\gamma_k^{-1}\|\cdot\|_1}
\big(\gamma_k^{-1}\bm{\xi}^k+\bm{x}^k-\gamma_k^{-1}A^{\top}\bm{z}^{k,t}\big)
\quad \mbox{and} \quad
\bm{e}^{k,t} := \nabla\Psi_k^{\text{reg}}\big(\bm{z}^{k,t}\big).
\end{equation*}
We then have the following proposition, whose proof is relegated to Appendix \ref{apd-pro-l12reg}.

\begin{proposition}\label{pro-scnew-l12reg}
If $(\bm{w}^{k,t},\bm{e}^{k,t})$ satisfies
\begin{equation}\label{SC1new_l12reg}
\|A^{\top}\bm{e}^{k,t}\|^2 +
|\langle A^{\top} \bm{e}^{k,t}, \,\bm{w}^{k,t}-\bm{x}^{k}\rangle|
\leq \frac{\sigma\gamma_k}{2}\|\bm{w}^{k,t}-\bm{x}^{k}\|^2,
\end{equation}
then the inexact stopping criterion (SC1) holds for $\bm{x}^{k+1}:=\bm{w}^{k,t}$, $\Delta^k:=-A^{\top}\bm{e}^{k,t}$ and $\delta_k:=0$. Similarly, If $(\bm{w}^{k,t},\bm{e}^{k,t})$ satisfies
\begin{equation}\label{SC2new_l12reg}
\|A^{\top}\bm{e}^{k,t}\|^2 +
|\langle A^{\top} \bm{e}^{k,t}, \,\bm{w}^{k,t}-\bm{x}^{k}\rangle|
\leq \frac{\sigma\gamma_k}{2}\|\bm{x}^{k}-\bm{x}^{k-1}\|^2,
\end{equation}
then the inexact stopping criterion (SC2) holds for $\bm{x}^{k+1}:=\bm{w}^{k,t}$, $\Delta^k:=-A^{\top}\bm{e}^{k,t}$ and $\delta_k:=0$.
\end{proposition}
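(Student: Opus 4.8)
The plan is to verify, for the stated choice $\bm{x}^{k+1}:=\bm{w}^{k,t}$, $\Delta^k:=-A^{\top}\bm{e}^{k,t}$, $\delta_k:=0$, the two requirements that define an admissible iterate in Algorithm \ref{alg-iBPDCA}: the inclusion \eqref{iBPDCA-inexcond} and the relative test (SC1) (resp.\ (SC2)). For this problem $f\equiv0$, $P_1(\bm{x})=\frac12\|A\bm{x}-\bm{b}\|^2+\lambda\|\bm{x}\|_1$ and $\phi(\bm{x})=\frac12\|\bm{x}\|^2$, so $\nabla\phi$ is the identity map and $\mathcal{D}_{\phi}(\bm{x},\bm{y})=\frac12\|\bm{x}-\bm{y}\|^2$. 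Since $\bm{x}^{k+1}=\bm{w}^{k,t}\in\mathbb{R}^n=\mathrm{dom}\,P_1\cap\mathrm{int}\,\mathrm{dom}\,\phi$ and $\delta_k=0\geq0$, it only remains to check \eqref{iBPDCA-inexcond} and the corresponding inequality, which I would treat in turn.

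For \eqref{iBPDCA-inexcond}, the first step is to extract an explicit element of $\lambda\partial\|\cdot\|_1$ at $\bm{w}^{k,t}$ from the proximal mapping that defines it. Writing out the optimality condition of $\bm{w}^{k,t}=\texttt{prox}_{\lambda\gamma_k^{-1}\|\cdot\|_1}(\bm{v}_k(\bm{z}^{k,t}))$ and using $\bm{v}_k(\bm{z}^{k,t})=\gamma_k^{-1}\bm{\xi}^k+\bm{x}^k-\gamma_k^{-1}A^{\top}\bm{z}^{k,t}$, one obtains $\gamma_k\big(\bm{v}_k(\bm{z}^{k,t})-\bm{w}^{k,t}\big)=\bm{\xi}^k-A^{\top}\bm{z}^{k,t}-\gamma_k(\bm{w}^{k,t}-\bm{x}^k)\in\lambda\partial\|\bm{w}^{k,t}\|_1$. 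Plugging this element into the right-hand side $A^{\top}(A\bm{w}^{k,t}-\bm{b})+\lambda\partial\|\bm{w}^{k,t}\|_1-\bm{\xi}^k+\gamma_k(\bm{w}^{k,t}-\bm{x}^k)$ of the inclusion to be verified (recall $\nabla f\equiv0$ and $\gamma_k(\nabla\phi(\bm{x}^{k+1})-\nabla\phi(\bm{x}^k))=\gamma_k(\bm{w}^{k,t}-\bm{x}^k)$), the $\bm{\xi}^k$ and $\gamma_k(\bm{w}^{k,t}-\bm{x}^k)$ terms cancel and one is left with $A^{\top}(A\bm{w}^{k,t}-\bm{b}-\bm{z}^{k,t})$. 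Using the explicit formula $\nabla\Psi_k^{\text{reg}}(\bm{z})=-A\,\texttt{prox}_{\lambda\gamma_k^{-1}\|\cdot\|_1}(\bm{v}_k(\bm{z}))+\bm{z}+\bm{b}$, that is $\bm{e}^{k,t}=-A\bm{w}^{k,t}+\bm{z}^{k,t}+\bm{b}$, this quantity equals $-A^{\top}\bm{e}^{k,t}=\Delta^k$, so \eqref{iBPDCA-inexcond} holds (in fact with equality for this particular subgradient selection) with $\delta_k=0$.

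For the stopping test, since $\Delta^k=-A^{\top}\bm{e}^{k,t}$, $\delta_k=0$, and $\mathcal{D}_{\phi}(\bm{x},\bm{y})=\frac12\|\bm{x}-\bm{y}\|^2$, the left-hand side $\|\Delta^k\|^2+|\langle\Delta^k,\bm{x}^{k+1}-\bm{x}^k\rangle|+\delta_k$ of (SC1) becomes $\|A^{\top}\bm{e}^{k,t}\|^2+|\langle A^{\top}\bm{e}^{k,t},\bm{w}^{k,t}-\bm{x}^k\rangle|$, and its right-hand side $\sigma\gamma_k\mathcal{D}_{\phi}(\bm{x}^{k+1},\bm{x}^k)$ becomes $\frac{\sigma\gamma_k}{2}\|\bm{w}^{k,t}-\bm{x}^k\|^2$; hence \eqref{SC1new_l12reg} is precisely (SC1). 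Replacing $\mathcal{D}_{\phi}(\bm{x}^{k+1},\bm{x}^k)$ by $\mathcal{D}_{\phi}(\bm{x}^k,\bm{x}^{k-1})=\frac12\|\bm{x}^k-\bm{x}^{k-1}\|^2$ yields the same statement for (SC2), so \eqref{SC2new_l12reg} is precisely (SC2). The argument is essentially bookkeeping; the one point I would double-check carefully is that the subgradient pulled out of the prox step, the gradient $\nabla\Psi_k^{\text{reg}}(\bm{z}^{k,t})$, and the optimality structure of the primal subproblem \eqref{subprob_l12reg} are written with consistent signs and scalings, so that the cancellation leaves exactly $-A^{\top}\bm{e}^{k,t}$ rather than $+A^{\top}\bm{e}^{k,t}$ or a rescaled version.
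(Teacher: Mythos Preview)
Your proposal is correct and follows essentially the same approach as the paper: both arguments extract a subgradient of $\lambda\|\cdot\|_1$ at $\bm{w}^{k,t}$ from the proximal optimality condition, substitute $\bm{e}^{k,t}=-A\bm{w}^{k,t}+\bm{z}^{k,t}+\bm{b}$, and observe that the remaining term is exactly $-A^{\top}\bm{e}^{k,t}$, after which (SC1)/(SC2) reduce to \eqref{SC1new_l12reg}/\eqref{SC2new_l12reg} because $\mathcal{D}_{\phi}(\bm{x},\bm{y})=\tfrac{1}{2}\|\bm{x}-\bm{y}\|^2$. Your write-up is in fact slightly more explicit about the sign bookkeeping and the translation of the Bregman distance than the paper's appendix proof.
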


From Proposition \ref{pro-scnew-l12reg}, we see that the inexact stopping criterion (SC1) or (SC2) is indeed verifiable and can be satisfied as long as inequality \eqref{SC1new_l12reg} or \eqref{SC2new_l12reg} holds. Moreover, when the dual sequence $\{\bm{z}^{k,t}\}$ generated by the {\sc Ssn} method is convergent, we have that $\{\bm{w}^{k,t}\}$ is bounded and $\bm{e}^{k,t}\to0$, and hence $\|A^{\top}\bm{e}^{k,t}\|^2 +
|\langle A^{\top} \bm{e}^{k,t}, \,\bm{w}^{k,t}-\bm{x}^{k}\rangle|\to0$. On the other hand, when $\bm{x}^{k}$ is not the optimal solution of the $k$-th subproblem \eqref{subprob_l12reg} or $\bm{x}^{k}\neq\bm{x}^{k-1}$, the right-hand-side term in \eqref{SC1new_l12reg} or \eqref{SC2new_l12reg} cannot approach zero. Therefore, inequality \eqref{SC1new_l12reg} or \eqref{SC2new_l12reg} must hold after finitely many iterations.

\subsubsection{Comparison results}\label{sec-comp-l12reg}

We will evaluate the performance of the iBPDCA with (SC1) (denoted by iBPDCA-SC1) and the iBPDCA with (SC2) (denoted by iBPDCA-SC2). For both iBPDCA-SC1 and iBPDCA-SC2, we set $\gamma_k=\max\left\{\frac{1}{\sqrt{k+1}}, \,10^{-1}\right\}$. With such choices of $\{\gamma_k\}$, we then follow Theorem \ref{thm-subseq} to set $\sigma=0.9$ for iBPDCA-SC1 and set $\sigma=0.09$ for iBPDCA-SC2 to guarantee the convergence.
In addition, for the {\sc Ssn} in Algorithm \ref{algo:SSN}, we set $\mu=10^{-4}$, $\delta=0.5$, $\bar{\eta}=10^{-3}$ and $\gamma=0.2$. Moreover, we will initialize {\sc Ssn} with the origin at the first outer iteration and then employ a \textit{warm-start} strategy thereafter. Specifically, at each outer iteration, we initialize {\sc Ssn} with the approximate solution obtained by the {\sc Ssn} method in the previous outer iteration.

We also include the general iterative shrinkage and thresholding (GIST) algorithm \cite{gzlhy2013general}, the non-monotone accelerated proximal gradient method with line search (denoted by nmAPG)\footnote{The implementations of nmAPG in our experiments are based on the original {\sc Matlab} codes, which are
available at \url{https://zhouchenlin.github.io/.}} \cite{ll2015accelerated}, and the proximal difference-of-convex algorithm with extrapolation (pDCAe)\footnote{The {\sc Matlab} codes of the pDCAe for solving \eqref{probleml12} are available at \url{https://www.polyu.edu.hk/ama/profile/pong/pDCAe_final_codes/}.} \cite{wcp2018proximal} in our comparisons. Note that pDCAe can be viewed as a special case of Bregman proximal DC algorithm with extrapolation (BPDCAe) in \cite{tft2022new}, where the corresponding kernel function is chosen as $\phi(\bm{x}) = \frac{1}{2}\|\bm{x}\|^2$. For each of these algorithms, we follow the parameter settings recommended in their respective references.


We initialize all methods with a point $\bm{x}^0$ obtained by applying the fast iterative shrinkage-thresholding algorithm (FISTA) with backtracking \cite{bt2009a} for solving the classical $\ell_1$ regularized least squares problem:$\min\big\{\lambda\|\bm{x}\|_1+\frac{1}{2}\|A\bm{x}-\bm{b}\|^2\big\}$, using 200 iterations. Moreover, for iBPDCA-SC2, in order to prevent an improper choice of $\bm{x}^{-1}$, we will use (SC1) as a warm start at the first outer iteration and then transition to (SC2) thereafter. Finally, we terminate all methods when the number of iterations reaches 30000 or the following holds for 3 consecutive iterations:
\begin{equation*}
\max\left\{\frac{\|\bm{x}^k-\bm{x}^{k-1}\|}{1+\|\bm{x}^k\|},\, \frac{|F_{\text{reg}}(\bm{x}^k)-F_{\text{reg}}(\bm{x}^{k-1})|}{1+|F_{\text{reg}}(\bm{x}^k)|}\right\} < 10^{-7}
\quad \mbox{or} \quad
\frac{|F_{\text{reg}}(\bm{x}^k)-F_{\text{reg}}(\bm{x}^{k-1})|}{1+|F_{\text{reg}}(\bm{x}^k)|} < 10^{-10}.
\end{equation*}

In the following experiments, we choose $\lambda \in \{0.01, 0.1, 1, 10\}$ and consider $(m,n,s)=(100i,1000i,20i)$ for $i = 2,5,10,15,20$. For each triple $(m, n, s)$, we follow \cite[Section 5]{wcp2018proximal} to randomly generate a trial as follows. First, we generate a matrix $A \in \mathbb{R}^{m\times n}$ with i.i.d. standard Gaussian entries. We then choose a subset $\mathcal{S}\subset\{1, \cdots, n\}$ of size $s$ uniformly at random and generate an $s$-sparse vector $\bm{x}_{\text{orig}}\in\mathbb{R}^{n}$, which has i.i.d. standard Gaussian entries on $\mathcal{S}$ and zeros on the complement set $\mathcal{S}^c$. Finally, we generate the vector $\bm{b} \in \mathbb{R}^{m}$ by setting $\bm{b}=A\bm{x}_{\text{orig}} + 0.01\cdot\widehat{\bm{n}}$, where $\widehat{\bm{n}}\in\mathbb{R}^m$ is a random vector with i.i.d. standard Gaussian entries.

The average computational results for each triple $(m,n,s)$ from 20 instances are presented in Tables \ref{Table1-l12reg} and \ref{Table2-l12reg}. From the results, one can observe that both iBPDCA-SC1 and iBPDCA-SC2 exhibit superior numerical performance compared to GIST, nmAPG, and pDCAe, especially when the regularization parameter $\lambda$ is small or the problem size is large. For example, for the cases where $\lambda\leq1$ and $n\geq10000$, both iBPDCA-SC1 and iBPDCA-SC2 are consistently about 8 times faster than GIST, 5 times faster than pDCAe, and 2 times faster than nmAPG, while achieving comparable or even better objective function values. Moreover, one can also see that iBPDCA-SC1 and iBPDCA-SC2 exhibit the similar performance. This is indeed expected since they essentially employ the same framework, but differ in their stopping criteria for solving the subproblems. With the kernel function chosen as $\phi(\bm{x})=\frac{1}{2}\|\bm{x}\|^2$ in this part of experiments, the cost of computing the associated Bregman distance becomes negligible, rendering the effort required to verify inequalities \eqref{SC1new_l12reg} and \eqref{SC2new_l12reg} almost identical. Therefore, the CPU time of iBPDCA-SC1 and iBPDCA-SC2 can be similar when they undergo a similar number of total {\sc Ssn} iterations.

We also test all the algorithms with two instances $(A,\bm{b})$ obtained from the data sets \texttt{mpg7} and \texttt{gisette} in the UCI data repository \cite{UCIdata}. Here, the \texttt{mpg7} data set is an expanded version of its original version, with the last digit signifying that an order 7 polynomial is used to generate the basis functions; see \cite[Section 4.1]{lst2018highly} for more details. For \texttt{mpg7}, the matrix $A$ is of size $392\times3432$ with $\lambda_{\max}(A^{\top}A)\approx1.28\times10^4$. For \texttt{gisette}, the matrix $A$ is of size $1000\times4971$ with $\lambda_{\max}(A^{\top}A)\approx3.36\times10^6$. For each data set, we choose $\lambda=\lambda_c\|A^{\top}\bm{b}\|_{\infty}$ with $\lambda_c\in\{10^{-3}, 10^{-4},10^{-5}\}$. Figure \ref{Figl1l2regreal} shows the numerical results of GIST, nmAPG, pDCAe, iBPDCA-SC1 and iBPDCA-SC2, where we plot the objective value $F_{\text{reg}}(\bm{x}^k)$ against the computational time.
From the results, we see that GIST, nmAPG, and pDCAe generally exhibit a slower rate of reduction in the objective value, potentially attributed to the large Lipschitz constant associated with the real data. In contrast, our iBPDCA-SC1 and iBPDCA-SC2 continue to demonstrate superior performance on these two real data sets.

\begin{remark}[\textbf{Comments on difference between pDCAe and iBPDCA}]\label{rek-diff}
As discussed in Section \ref{sec-iBPDCA}, by allowing the inexact minimization of the subproblem, our iBPDCA has more flexibility in practical implementations when it is applied to a specific problem. Indeed, for problem \eqref{probleml12}, the pDCAe developed in \cite{wcp2018proximal} is applied to the DC decomposition scheme shown on the left below so that the subproblem admits an easy-to-compute solution, while our iBPDCA can be applied to the DC decomposition scheme shown on the right below, as implemented in our experiments.
\begin{equation*}
\begin{aligned}
\quad
&\hspace{2.8cm}{\text{pDCAe}} \\
&\hspace{3.2cm}\Downarrow \\
&\min\limits_{\bm{x} \in \mathbb{R}^n}~
{\overbrace{{\lambda\|\bm{x}\|_1}}^{P_1(\bm{x})}
\,-{\overbrace{{\lambda\|\bm{x}\|}}^{P_2(\bm{x})}}+\, {\overbrace{{\frac{1}{2}\|A\bm{x}-\bm{b}\|_2^2}}^{f(\bm{x})}}}
\end{aligned}
\quad~~ vs ~~\quad
\begin{aligned}
&\hspace{2.8cm}{\text{iBPDCA}} \\
&\hspace{3.2cm}\Downarrow \\
&\min\limits_{\bm{x} \in \mathbb{R}^n}~
{\overbrace{{\lambda\|\bm{x}\|_1
+ \frac{1}{2}\|A\bm{x}-\bm{b}\|_2^2}}^{P_1(\bm{x})}
\,-\overbrace{{\lambda\|\bm{x}\|}}^{P_2(\bm{x})}\,
+\,\overbrace{{0}}^{f(\bm{x})}}.
\end{aligned}	
\end{equation*}
These two decomposition schemes result in different algorithms with different numerical performances. Since pDCAe also approximates the smooth convex part $\frac{1}{2}\|A\bm{x}-\bm{b}\|^2$ by a quadratic majorant, its performance would be significantly influenced by the Lipschitz constant $L_A:=\lambda_{\max}(A^{\top}A)$. This sensitivity to the Lipschitz constant is indeed a well-recognized limitation of first-order methods and may lead to the inferior performance of GIST, nmAPG and pDCAe for solving large-scale problems, as observed in Tables \ref{Table1-l12reg}$\&$\ref{Table2-l12reg} and Figure \ref{Figl1l2regreal}. This is because, for a randomly generated matrix $A$, the largest eigenvalue of $A^{\top}A$ tends to increase with the size, and for the data sets \texttt{mpg7} and \texttt{gisette}, the largest eigenvalue of $A^{\top}A$ is around $1.28\times10^4$ and $3.36\times10^6$, respectively. In contrast, our iBPDCA can sidestep the majorization of the least squares term and choose instead to solve the subproblem via a highly efficient second-order {\sc Ssn} method. The results in Tables \ref{Table1-l12reg}$\&$\ref{Table2-l12reg} and Figure \ref{Figl1l2regreal} illustrate the promising performance of this algorithmic framework. But we should emphasize that these achievements are made possible by allowing the inexact minimization of the subproblem. This also highlights the motivation for developing an efficient inexact algorithmic framework in this work.
\end{remark}

\begin{table}[ht]
\caption{The average computational results for each triple $(m,n,s)$ from 20 instances on the $\ell_{1-2}$ regularized least squares problem with $\lambda\in\{0.01,0.1\}$, where ``\texttt{obj}" denotes the objective function value, ``\texttt{iter}" denotes the number of iterations (the total number of the {\sc Ssn} iterations in iBPDCA is also given in the bracket), ``\texttt{time}" denotes the computational time, and ``\texttt{t0}" denotes the computational time used to obtain an initial point by FISTA using 200 iterations.}\label{Table1-l12reg}
\centering \tabcolsep 3.6pt
\scalebox{0.9}{
\renewcommand\arraystretch{1}
\begin{tabular}{|l|l|lllc|lllc|}
\hline
$(m,n,s)$ & \texttt{method} & \texttt{obj} & \texttt{iter} & \texttt{time} & \texttt{t0} & \texttt{obj} & \texttt{iter} & \texttt{time} & \texttt{t0}  \\
\hline
&&\multicolumn{4}{c|}{$\lambda=0.01$} & \multicolumn{4}{c|}{$\lambda=0.1$}  \\
\hline
\multirow{5}{*}{(200,2000,40)}
&GIST       & 3.22e-1 & 30000 & 2.74 & 0.03 & 2.54e+0 & 16498 & 1.45 & 0.03 \\
&nmAPG      & 2.54e-1 & 4507 & 0.40 & 0.03 & 2.54e+0 & 1535 & 0.14 & 0.03 \\
&pDCAe      & 2.73e-1 & 30000 & 2.06 & 0.03 & 2.54e+0 & 8353 & 0.56 & 0.03 \\
&iBPDCA-SC1 & 2.54e-1 & 120 (405) & 0.64 & 0.03 & 2.54e+0 & 30 (170) & 0.29 & 0.03 \\
&iBPDCA-SC2 & 2.54e-1 & 120 (414) & 0.64 & 0.03 & 2.54e+0 & 30 (173) & 0.30 & 0.03 \\
\hline
\multirow{5}{*}{(500,5000,100)}
&GIST       & 9.92e-1 & 30000 & 14.47 & 0.16 & 7.02e+0 & 29927 & 14.28 & 0.16 \\
&nmAPG      & 6.92e-1 & 7634 & 3.49 & 0.16 & 6.92e+0 & 2660 & 1.29 & 0.16 \\
&pDCAe      & 8.12e-1 & 30000 & 10.60 & 0.16 & 6.92e+0 & 24732 & 8.62 & 0.16 \\
&iBPDCA-SC1 & 6.92e-1 & 161 (599) & 5.10 & 0.16 & 6.92e+0 & 35 (259) & 2.66 & 0.16 \\
&iBPDCA-SC2 & 6.92e-1 & 161 (612) & 5.17 & 0.16 & 6.92e+0 & 35 (263) & 2.68 & 0.16 \\
\hline
\multirow{5}{*}{(1000,10000,200)}
&GIST       & 2.39e+0 & 30000 & 212.54 & 1.92 & 1.59e+1 & 30000 & 211.05 & 1.91 \\
&nmAPG      & 1.48e+0 & 11233 & 69.61 & 1.92 & 1.48e+1 & 4356 & 34.83 & 1.91 \\
&pDCAe      & 1.88e+0 & 30000 & 137.02 & 1.92 & 1.50e+1 & 30000 & 137.30 & 1.91 \\
&iBPDCA-SC1 & 1.48e+0 & 242 (898) & 31.63 & 1.92 & 1.48e+1 & 50 (379) & 16.71 & 1.91 \\
&iBPDCA-SC2 & 1.48e+0 & 242 (914) & 31.86 & 1.92 & 1.48e+1 & 50 (382) & 16.52 & 1.91 \\
\hline
\multirow{5}{*}{(1500,15000,300)}
&GIST       & 3.86e+0 & 30000 & 552.27 & 5.09 & 2.53e+1 & 30000 & 547.84 & 5.09 \\
&nmAPG      & 2.24e+0 & 13908 & 217.94 & 5.09 & 2.24e+1 & 5412 & 114.91 & 5.09 \\
&pDCAe      & 2.99e+0 & 30000 & 362.98 & 5.09 & 2.31e+1 & 30000 & 363.23 & 5.09 \\
&iBPDCA-SC1 & 2.24e+0 & 299 (1072) & 86.72 & 5.09 & 2.24e+1 & 59 (439) & 44.33 & 5.09 \\
&iBPDCA-SC2 & 2.24e+0 & 299 (1088) & 87.13 & 5.09 & 2.24e+1 & 59 (444) & 44.19 & 5.09 \\
\hline
\multirow{5}{*}{(2000,20000,400)}
&GIST       & 5.31e+0 & 30000 & 1018.43 & 9.35 & 3.49e+1 & 30000 & 1005.39 & 9.34 \\
&nmAPG      & 3.00e+0 & 16571 & 459.36 & 9.35 & 3.00e+1 & 6394 & 242.11 & 9.34 \\
&pDCAe      & 4.12e+0 & 30000 & 653.66 & 9.35 & 3.13e+1 & 30000 & 651.64 & 9.34 \\
&iBPDCA-SC1 & 3.00e+0 & 367 (1295) & 201.98 & 9.35 & 3.00e+1 & 73 (518) & 98.71 & 9.34 \\
&iBPDCA-SC2 & 3.00e+0 & 367 (1312) & 202.73 & 9.35 & 3.00e+1 & 73 (524) & 98.39 & 9.34 \\
\hline
\end{tabular}
}
\end{table}

\begin{table}[ht]
\caption{
Same as Table 1 but for the $\ell_{1-2}$ regularized least squares problem with $\lambda\in\{1,10\}$.
}\label{Table2-l12reg}
\centering \tabcolsep 4.5pt
\scalebox{0.9}{
\renewcommand\arraystretch{1}
\begin{tabular}{|l|l|lllc|lllc|}
\hline
$(m,n,s)$ & \texttt{method} & \texttt{obj} & \texttt{iter} & \texttt{time} & \texttt{t0} & \texttt{obj} & \texttt{iter} & \texttt{time} & \texttt{t0}  \\
\hline
&&\multicolumn{4}{c|}{$\lambda=1$} & \multicolumn{4}{c|}{$\lambda=10$}  \\
\hline
\multirow{5}{*}{(200,2000,40)}
&GIST       & 2.53e+1 & 704 & 0.07 & 0.03    & 2.44e+2 & 94 & 0.01 & 0.03 \\
&nmAPG      & 2.53e+1 & 301 & 0.03 & 0.03    & 2.44e+2 & 161 & 0.02 & 0.03 \\
&pDCAe      & 2.53e+1 & 515 & 0.04 & 0.03    & 2.44e+2 & 153 & 0.01 & 0.03 \\
&iBPDCA-SC1 & 2.53e+1 & 9 (50) & 0.10 & 0.03 & 2.44e+2 & 8 (32) & 0.08 & 0.03 \\
&iBPDCA-SC2 & 2.53e+1 & 9 (50) & 0.10 & 0.03 & 2.44e+2 & 8 (32) & 0.08 & 0.03 \\
\hline
\multirow{5}{*}{(500,5000,100)}
&GIST       & 6.90e+1 & 4434 & 2.22 & 0.16 & 6.80e+2 & 240 & 0.15 & 0.16 \\
&nmAPG      & 6.90e+1 & 683 & 0.41 & 0.16 & 6.80e+2 & 198 & 0.12 & 0.16 \\
&pDCAe      & 6.90e+1 & 2633 & 0.94 & 0.16 & 6.80e+2 & 237 & 0.10 & 0.16 \\
&iBPDCA-SC1 & 6.90e+1 & 12 (116) & 1.29 & 0.16 & 6.80e+2 & 7 (45) & 0.57 & 0.16 \\
&iBPDCA-SC2 & 6.90e+1 & 12 (117) & 1.30 & 0.16 & 6.80e+2 & 7 (45) & 0.58 & 0.16 \\
\hline
\multirow{5}{*}{(1000,10000,200)}
&GIST       & 1.48e+2 & 14834 & 105.07 & 1.91 & 1.47e+3 & 585 & 4.33 & 1.90 \\
&nmAPG      & 1.48e+2 & 1421 & 14.14 & 1.91 & 1.47e+3 & 268 & 1.86 & 1.90 \\
&pDCAe      & 1.48e+2 & 8547 & 39.06 & 1.91 & 1.47e+3 & 401 & 1.87 & 1.90 \\
&iBPDCA-SC1 & 1.48e+2 & 15 (185) & 9.74 & 1.91 & 1.47e+3 & 7 (69) & 4.11 & 1.90 \\
&iBPDCA-SC2 & 1.48e+2 & 15 (184) & 9.64 & 1.91 & 1.47e+3 & 7 (68) & 4.10 & 1.90 \\
\hline
\multirow{5}{*}{(1500,15000,300)}
&GIST       & 2.24e+2 & 23697 & 434.23 & 5.09 & 2.23e+3 & 1213 & 22.69 & 5.09 \\
&nmAPG      & 2.24e+2 & 2014 & 56.34 & 5.09 & 2.23e+3 & 409 & 9.10 & 5.09 \\
&pDCAe      & 2.24e+2 & 13670 & 165.42 & 5.09 & 2.23e+3 & 777 & 9.52 & 5.09 \\
&iBPDCA-SC1 & 2.24e+2 & 17 (225) & 27.07 & 5.09 & 2.23e+3 & 8 (93) & 13.07 & 5.09 \\
&iBPDCA-SC2 & 2.24e+2 & 17 (226) & 27.00 & 5.09 & 2.23e+3 & 8 (93) & 12.98 & 5.09 \\
\hline
\multirow{5}{*}{(2000,20000,400)}
&GIST       & 2.99e+2 & 28289 & 952.63 & 9.32 & 2.98e+3 & 2661 & 91.11 & 9.29 \\
&nmAPG      & 2.99e+2 & 2506 & 128.11 & 9.32 & 2.98e+3 & 623 & 29.32 & 9.29 \\
&pDCAe      & 2.99e+2 & 20631 & 448.91 & 9.32 & 2.98e+3 & 1685 & 37.01 & 9.29 \\
&iBPDCA-SC1 & 2.99e+2 & 20 (264) & 59.98 & 9.32 & 2.98e+3 & 9 (129) & 33.45 & 9.29 \\
&iBPDCA-SC2 & 2.99e+2 & 20 (265) & 59.49 & 9.32 & 2.98e+3 & 9 (129) & 33.23 & 9.29 \\
\hline
\end{tabular}
}
\end{table}

\begin{figure}[ht]
\centering
\subfigure[\texttt{mpg7}, where $A$ is of size $392\times3432$ with $\lambda_{\max}(A^{\top}A)\approx1.28\times10^4$]{
\includegraphics[width=5cm]{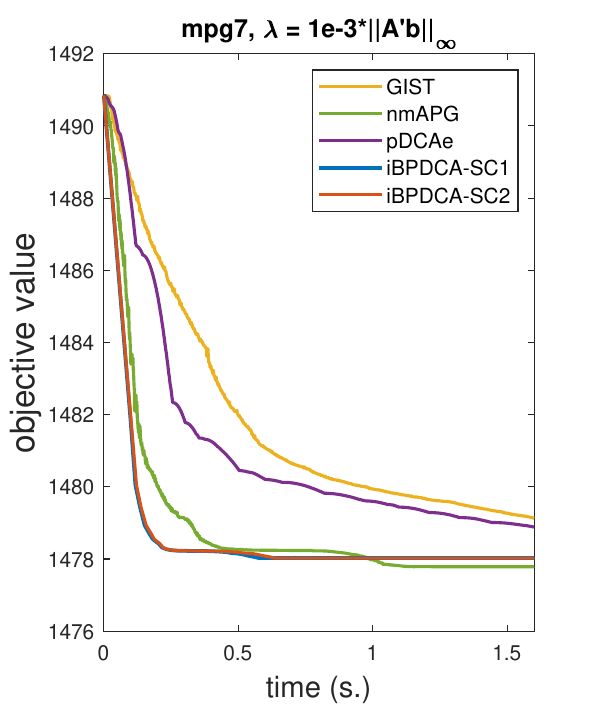}
\includegraphics[width=5cm]{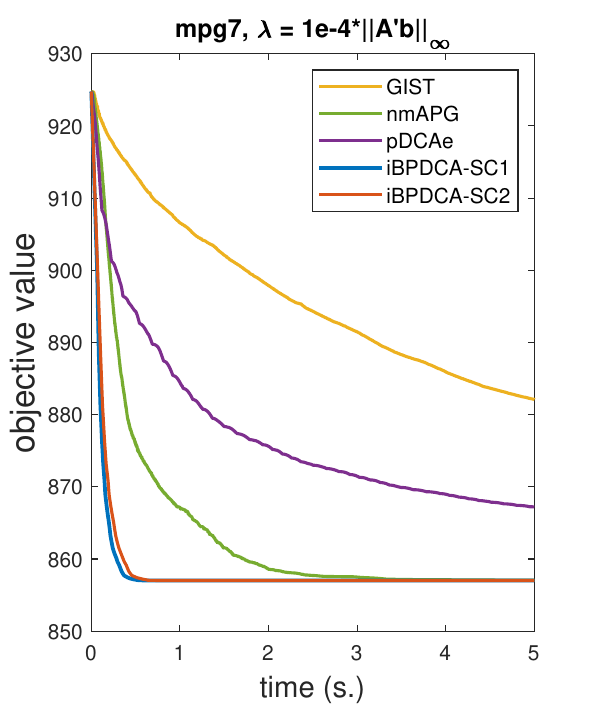}
\includegraphics[width=5cm]{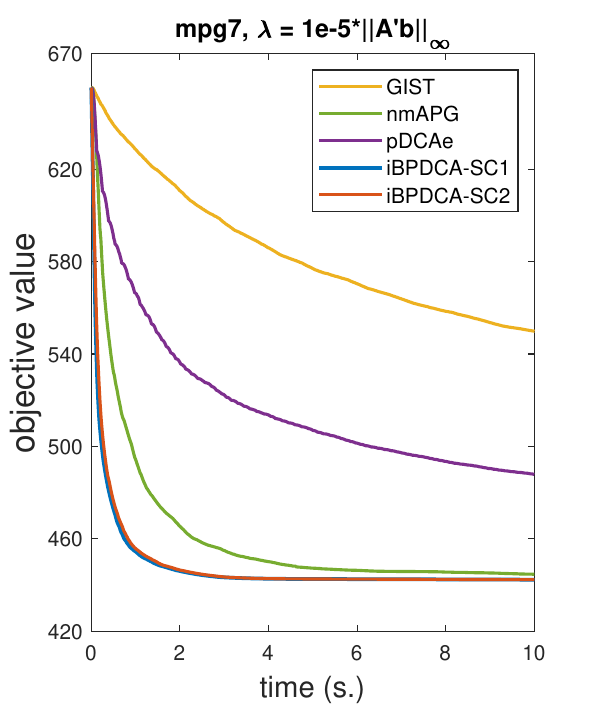}
}
\subfigure[\texttt{gisette}, where $A$ is of size $1000\times4971$ with $\lambda_{\max}(A^{\top}A)\approx3.36\times10^6$]{
\includegraphics[width=5cm]{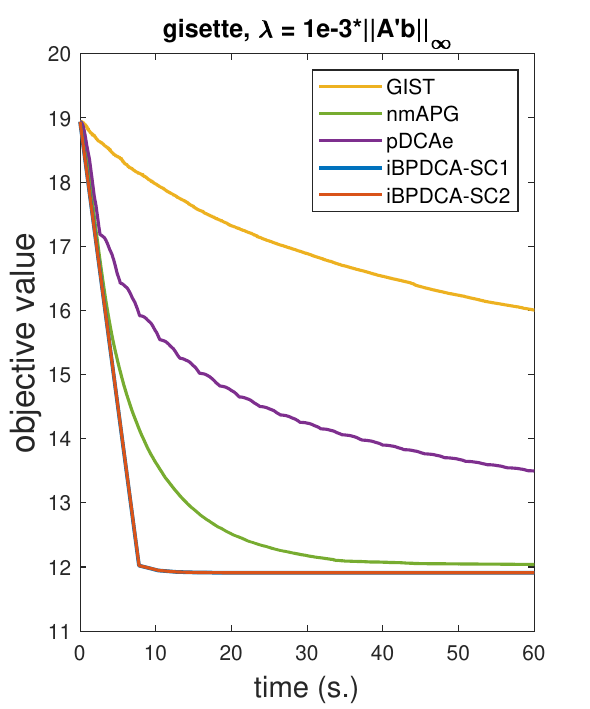}
\includegraphics[width=5cm]{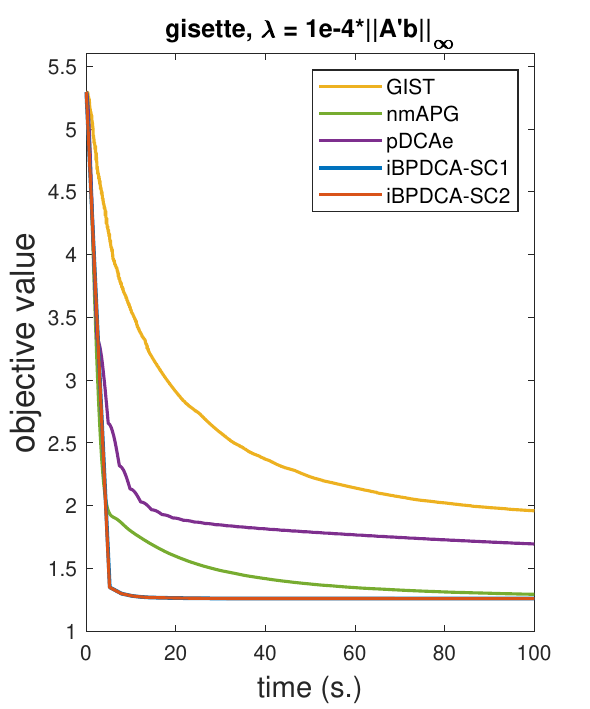}
\includegraphics[width=5cm]{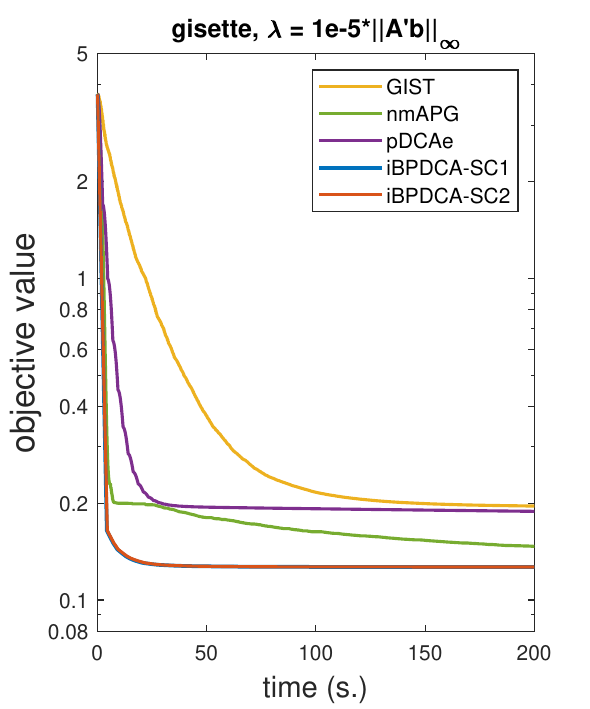}
}
\caption{Numerical results of GIST, nmAPG, pDCAe, iBPDCA-SC1 and iBPDCA-SC2 for solving the $\ell_{1-2}$ regularized least squares problem on \texttt{mpg7} and \texttt{gisette} from the UCI data repository.}\label{Figl1l2regreal}
\end{figure}

\subsection{The constrained $\ell_{1-2}$ sparse optimization problem}\label{sec-num-l12con}

In this section, we consider the constrained $\ell_{1-2}$ sparse optimization problem (see, e.g., \cite{lyhx2015computing,ylhx2015minimization,ypl2021convergence,zlpx2023retraction,zpx2023extended}):
\begin{equation}\label{probleml12con}
\begin{aligned}
\min _{\bm{x} \in \mathbb{R}^n} & \quad \|\bm{x}\|_1 - \mu\|\bm{x}\|\\
\text { s.t. } &\quad \|A \bm{x}-\bm{b}\| \leq \kappa,\\
 &\quad \|\bm{x}\|_{\infty} \leq M,
\end{aligned}
\end{equation}
where $\mu\in [0,1)$, $A\in\mathbb{R}^{m \times n}$ has full row rank, and $\bm{b} \in \mathbb{R}^m$. Moreover, we choose $\kappa\in(0,\,\|\bm{b}\|)$ so that the origin is not a feasible point and follow \cite[Section 6]{zpx2023extended} to choose $M:=(1-\mu)^{-1}\left(\|A^{\dag}\bm{b}\|_1-\mu\|A^\dag\bm{b}\|\right)$ so that the feasible region is bounded, where $A^{\dag}$ denotes the pseudo-inverse of $A$. To apply our iBPDCA in Algorithm \ref{alg-iBPDCA} for solving problem \eqref{probleml12con}, we consider the following equivalent reformulation:
\begin{equation*}
\min\limits_{\bm{x}\in\mathcal{Q}:=\mathbb{R}^n}~
F_{\text{con}}(\bm{x}):=
\underbrace{\iota_{\kappa}(A\bm{x}-\bm{b})
+ \iota_{M}(\bm{x})+\|\bm{x}\|_1}_{P_1(\bm{x})}
\,-\, \underbrace{\mu\|\bm{x}\|}_{P_2(\bm{x})}
\,+\, \underbrace{0}_{f(\bm{x})},
\end{equation*}
where $\iota_{\kappa}$ and $\iota_{M}$ denote respectively the indicator functions on the sets $\left\{\bm{x} \in \mathbb{R}^n:\|\bm{x}\|\leq \kappa\right\}$ and $\left\{\bm{x} \in \mathbb{R}^n:\|\bm{x}\|_{\infty} \leq M\right\}$, and choose the kernel function $\phi(\bm{x}):=\frac{1}{2}\|\bm{x}\|^2+\frac{1}{2}\|A\bm{x}\|^2$. It is easy to verify that $(f,\,\phi)$ is 0-smooth adaptable restricted on $\mathbb{R}^n$ and that $F_{\text{con}}$ is level-bounded. Thus, our iBPDCA is applicable and the associated subproblem at the $k$-th iteration ($k\geq0$) takes the following form:
\begin{equation}\label{subprobleml12con}
\min _{\bm{x} \in \mathbb{R}^n}
~~\iota_{\kappa}(A\bm{x}-\bm{b})
+ \iota_{M}(\bm{x})+\|\bm{x}\|_1
- \langle\bm{\xi}^k, \,\bm{x}-\bm{x}^k\rangle
+ \frac{\gamma_k}{2}\big\|\bm{x}-\bm{x}^k\big\|^2
+ \frac{\gamma_k}{2}\big\|A\bm{x}-A\bm{x}^k\big\|^2,
\end{equation}
where $\bm{\xi}^k \in \partial P_2(\bm{x}^k)$.

\subsubsection{A dual semismooth Newton method for the subproblem \eqref{subprobleml12con}}

Similar to the implementations as described in subsection \ref{sec_l12reg_ssn}, we discuss how to efficiently solve \eqref{subprobleml12con} via a dual {\sc Ssn} method to find a point $\bm{x}^{k+1}$ associated with an error pair $(\Delta^k,\,\delta_k)$ satisfying the relative stopping criterion (SC1) or (SC2). Specifically, we are dedicated to solving the following equivalent reformulation:
\begin{equation}\label{subprobref_l12con}
\begin{aligned}
&\min\limits_{\bm{x}\in\mathbb{R}^n,\,\bm{y}\in\mathbb{R}^m}~
\|\bm{x}\|_1 + \iota_{M}(\bm{x})
+ \frac{\gamma_k}{2}\left\|\bm{x}-\big(\bm{x}^k+\gamma_k^{-1}\bm{\xi}^k\big)\right\|^2
+ \iota_{\kappa}(\bm{y})
+ \frac{\gamma_k}{2}\left\|\bm{y} - \big(A\bm{x}^k-\bm{b}\big)\right\|^2 \\
&\quad~~\,\mathrm{s.t.}\qquad A\bm{x} - \bm{y} = \bm{b}.
\end{aligned}
\end{equation}
For notational simplicity, let $g(\cdot):=\|\cdot\|_1+\iota_{M}(\cdot)$, $\bm{s}^k:=\bm{x}^k+\gamma_k^{-1}\bm{\xi}^k$ and $\bm{b}^k:=A\bm{x}^k-\bm{b}$. By some manipulations, it can be shown that the dual problem of \eqref{subprobref_l12con} admits the following equivalent minimization form:
\begin{equation}\label{dualprobleml12con}
\min _{\bm{z} \in \mathbb{R}^m}\left\{\begin{aligned}
\Psi_k^{\text{con}}(\bm{z})
&:= \langle\bm{z}, \bm{b}\rangle
+\frac{\gamma_k}{2}\left\|\bm{s}^k-\gamma_k^{-1} A^{\top} \bm{z}\right\|^2
-\left\|\texttt{prox}_{\gamma_k^{-1}g}\left(\bm{s}^k-\gamma_k^{-1} A^{\top} \bm{z}\right)\right\|_1\\[3pt]
&\qquad -\frac{\gamma_k}{2}\left\|\texttt{prox}_{\gamma_k^{-1}g}\left(\bm{s}^k-\gamma_k^{-1} A^{\top} \bm{z}\right)
-\left(\bm{s}^k-\gamma_k^{-1}A^{\top}\bm{z}\right)\right\|^2\\[3pt]
&\qquad +\frac{\gamma_k}{2}\left\|\bm{b}^k+\gamma_k^{-1}\bm{z}\right\|^2
-\frac{\gamma_k}{2}\left\|\Pi_{\kappa}\left(\bm{b}^k+\gamma_k^{-1}\bm{z}\right)
-\left(\bm{b}^k+\gamma_k^{-1}\bm{z}\right)\right\|^2 \\[3pt]
&\qquad -\frac{\gamma_k}{2}\|\bm{s}^k\|^2
-\frac{\gamma_k}{2}\|\bm{b}^k\|^2
\end{aligned}\right\},
\end{equation}
where $\bm{z}\in\mathbb{R}^m$ is the dual variable and $\Pi_{\kappa}$ is the projection operator over $\left\{\bm{x} \in \mathbb{R}^n:\|\bm{x}\|\leq \kappa\right\}$. The detailed derivation of the dual problem \eqref{dualprobleml12con} is relegated to Appendix \ref{apd-dualprobleml12con}. From the property of the Moreau envelope of $\gamma_k^{-1}g$ and $\gamma_k^{-1}\iota_{\kappa}$ (see, e.g.,
\cite[Proposition 12.29]{bc2011convex}), we see that $\Psi_k^{\text{con}}$ is convex and continuously differentiable with the gradient:
\begin{equation*}
\nabla \Psi_k^{\text{con}}(\bm{z})=-A \texttt{prox}_{\gamma_k^{-1}g}\left(\bm{s}^k-\gamma_k^{-1} A^{\top} \bm{z}\right)+\Pi_{\kappa}\left(\bm{b}^k+\gamma_k^{-1} \bm{z}\right)+\bm{b}.
\end{equation*}
Thus, an optimal solution of problem \eqref{dualprobleml12con} can be obtained by solving the following nonlinear equation:
\begin{equation}\label{subdualequa_l12con}
\nabla \Psi_k^{\text{con}}(\bm{z})=0.
\end{equation}
Similar to the implementations in subsection \ref{sec_l12reg_ssn}, to apply the SSN method, we define a multifunction $\widehat{\partial}^2\Psi_k^{\text{con}}:\mathbb{R}^m \rightrightarrows \mathbb{R}^{m \times m}$ as follows:
\begin{equation*}
\widehat{\partial}^2\Psi_k^{\text{con}}(\bm{z}) := \gamma_k^{-1}A\partial\texttt{prox}_{\gamma_k^{-1}g}(\bm{s}^k-\gamma_k^{-1}A^{\top}\bm{z})A^{\top}
+ \gamma_k^{-1}\partial\Pi_{\kappa}\left(\bm{b}^k+\gamma_k^{-1} \bm{z}\right),
\end{equation*}
where $\partial\texttt{prox}_{\gamma_k^{-1}g}\!
\left(\bm{s}^k-\gamma_k^{-1}A^{\top}\bm{z}\right)$ is the Clarke subdifferential of the Lipschitz continuous mapping $\texttt{prox}_{\gamma_k^{-1}g}(\cdot)$ at $\bm{s}^k-\gamma_k^{-1}A^{\top}\bm{z}$, defined as follows:
\begin{equation*}
\partial\texttt{prox}_{\alpha^{-1}g}(\bm{u})
:= \left\{\mathrm{Diag}(\bm{d}) \,:\, \bm{d}\in\mathbb{R}^n, ~d_i\in
\left\{\begin{aligned}
&\{1\}, && \mathrm{if}~~|u_i|>\alpha^{-1}~\text{and}~\big|\texttt{prox}_{\alpha^{-1}|\cdot|}(u_i)\big|<M, \\[3pt]
&[0,\,1], && \mathrm{if}~~|u_i|=\alpha^{-1}~\text{or}~\big|\texttt{prox}_{\alpha^{-1}|\cdot|}(u_i)\big|=M, \\[3pt]
&\{0\}, && \mathrm{if}~~|u_i|<\alpha^{-1}~\text{or}~\big|\texttt{prox}_{\alpha^{-1}|\cdot|}(u_i)\big|>M,
\end{aligned}\right.~~\right\},
\end{equation*}
and $\partial\Pi_{\kappa}\left(\bm{b}^k+\gamma_k^{-1}\bm{z}\right)$ is the Clarke subdifferential of the Lipschitz continuous mapping $\Pi_{\kappa}(\cdot)$ at $\bm{b}^k+\gamma_k^{-1}\bm{z}$, defined as follows (see, e.g.,
\cite[Remark 3.1]{zzst2020efficient}):
\begin{equation*}
\partial\Pi_{\kappa}(\bm{u})
=\left\{\begin{aligned}
&\{I\}, &&~~\mathrm{if}~~\|\bm{u}\| < \kappa, \\
&\Big\{I - \frac{t}{\kappa^2}\bm{u}\bm{u}^{\top} \mid t\in[0,1]\Big\}, &&~~ \mathrm{if}~~\|\bm{u}\| = \kappa, \\
&\Big\{ \frac{\kappa}{\|\bm{u}\|}\big(I - \frac{1}{\|\bm{u}\|^2}\bm{u}\bm{u}^{\top}\big) \Big\}, &&~~ \mathrm{if}~~\|\bm{u}\| > \kappa.
\end{aligned}\right.
\end{equation*}
Note that the elements in $\widehat{\partial}^2\Psi_k^{\text{con}}(\bm{z})$ may only be positive semidefinite. We then need to incorporate a small adaptive regularization term when solving the linear system in the {\sc Ssn} method. We present the whole iterative framework in Algorithm \ref{algo:SSNcon} and refer readers to \cite[Theorems 3.4 and 3.5]{zst2010newton} for its convergence results.

\begin{algorithm}[htb!]
\caption{A semi-smooth Newton ({\sc Ssn}) method for solving equation \eqref{subdualequa_l12con}}\label{algo:SSNcon}
 	
\textbf{Initialization:} Choose $\bar{\eta}\in(0,1)$, $\gamma\in(0,1]$, $\mu\in(0,1/2)$, $\delta\in(0,1)$, $\tau_1,\tau_2\in(0,1)$, and an initial point $\bm{z}^{k,0}\in\mathbb{R}^m$. Set $t=0$. Repeat until a termination criterion is met. \vspace{-1mm}
\begin{itemize}[leftmargin=1.6cm]
\item[\textbf{Step 1.}] Compute $\nabla\Psi_k^{\text{con}}(\bm{z}^{k,t})$, select an element $H^{k,t}\in\widehat{\partial}^2\Psi_k^{\text{con}}(\bm{z}^{k,t})$, and let $\varepsilon_t:=\tau_1\min\big\{\tau_2,\,\|\nabla\Psi_k^{\text{con}}(\bm{z}^{k,t})\|\big\}$. Solve the linear system $\left(H^{k,t}+\varepsilon_t I_m\right)\bm{d} = -\nabla\Psi_k^{\text{con}}(\bm{z}^{k,t})$ nearly exactly by the (sparse) Cholesky factorization with forward and backward substitutions, \textit{or} approximately by the preconditioned conjugate gradient method to find $\bm{d}^{k,t}$ such that $\big\|\left(H^{k,t}+\varepsilon_t I_m\right)\bm{d}^{k,t} + \nabla\Psi_k^{\text{con}}(\bm{z}^{k,t})\big\|
    \leq \min\big(\bar{\eta}, \,\|\nabla\Psi_k^{\text{con}}(\bm{z}^{k,t})\|^{1+\gamma}\big)$.
	
\item[\textbf{Step 2.}] (\textbf{Inexact line search}) Find a step size $\alpha_t:=\delta^{i_t}$, where $i_t$ is the smallest nonnegative integer $i$ for which $\Psi_k^{\text{con}}(\bm{z}^{k,t} + \delta^i\bm{d}^{k,t})
    \leq \Psi_k^{\text{con}}(\bm{z}^{k,t}) + \mu \delta^{i}\langle\nabla\Psi_k^{\text{con}}(\bm{z}^{k,t}), \,\bm{d}^{k,t}\rangle$.

\item[\textbf{Step 3.}] Set $\bm{z}^{k,t+1} = \bm{z}^{k,t} + \alpha_t\bm{d}^{k,t}$, $t=t+1$, and go to \textbf{Step 1}.
\end{itemize}
\end{algorithm}

We next show that our inexact stopping criteria (SC1) and (SC2) can also be achieved through the appropriate manipulations based on the dual sequence generated by the {\sc Ssn} method in Algorithm \ref{algo:SSNcon}. To this end, we first assume that a partially strictly feasible point $\bm{x}^{\text{feas}}$ satisfying $\|A\bm{x}^{\text{feas}}-\bm{b}\|<\kappa$ and $\|\bm{x}^{\text{feas}}\|_{\infty} \leq M$ is available on hand. Indeed, from \cite[Section 6.1]{zlpx2023retraction}, with the choice of $M=(1-\mu)^{-1}\left(\|A^{\dag}\bm{b}\|_1-\mu\|A^\dag\bm{b}\|\right)$, such a point can be simply obtained by setting $\bm{x}^{\text{feas}}:=A^\dag\bm{b}$. Then, at the $k$-th iteration, we apply the {\sc Ssn} method for solving equation \eqref{subdualequa_l12con}, which generates a dual sequence $\{\bm{z}^{k,t}\}$. Let
\begin{equation*}
\bm{w}^{k,t}:=\texttt{prox}_{\gamma_k^{-1}g}
\big(\bm{x}^k+\gamma_k^{-1}\bm{\xi}^k-\gamma_k^{-1}A^{\top}\bm{z}^{k,t}\big)
\quad \mbox{and} \quad
\bm{e}^{k,t} := \nabla\Psi_k^{\text{con}}\big(\bm{z}^{k,t}\big).
\end{equation*}
Note that the terminating approximate primal solution $\bm{w}^{k,t}$ may not be exactly feasible to the subproblem \eqref{subprobleml12con} and thus the inexact stopping criteria (SC1) and (SC2) cannot be verified at $\bm{w}^{k,t}$ in this scenario. Therefore, we further adapt a retraction strategy with the aid of the partially strictly feasible point $\bm{x}^{\text{feas}}$. Specifically, we define
\begin{equation*}
\widetilde{\bm{w}}^{k,t}:=\rho_{k,t}\bm{w}^{k,t} + (1-\rho_{k,t})\bm{x}^{\text{feas}}
\end{equation*}
with
\begin{equation*}
\rho_{k,t} := \left\{
\begin{aligned}
&1, &&\|A\bm{w}^{k,t}-\bm{b}\|\leq\kappa,\\
&\frac{\kappa-\|A\bm{x}^{\text{feas}}-\bm{b}\|}{\|A\bm{w}^{k,t}-\bm{b}\|-\|A\bm{x}^{\text{feas}}-\bm{b}\|}, &&\|A\bm{w}^{k,t}-\bm{b}\|>\kappa.\\
\end{aligned}\right.
\end{equation*}
Then, one can easily verify that $\widetilde{\bm{w}}^{k,t}$ is a feasible point of the subproblem \eqref{subprobleml12con}, namely, it satisfies that $\|A\widetilde{\bm{w}}^{k,t}-\bm{b}\|\leq\kappa$ and $\|\widetilde{\bm{w}}^{k,t}\|_{\infty} \leq M$. Thus, $\widetilde{\bm{w}}^{k,t}$ is a feasible candidate that can be used in the verifications of stopping criteria (SC1) and (SC2). Indeed, we have the following proposition, whose proof is relegated to Appendix \ref{apd-pro-l12con}.

\begin{proposition}\label{pro-scnew-l12con}
Let
\begin{equation*}
\begin{aligned}
\Delta^{k,t}
&:=-\gamma_kA^{\top}\bm{e}^{k,t} + \gamma_k(\widetilde{\bm{w}}^{k,t}-\bm{w}^{k,t})
+ \gamma_k A^{\top}A(\widetilde{\bm{w}}^{k,t}-\bm{w}^{k,t}), \\
\bm{d}_1^{k,t}
&:=\gamma_k\left[\big({\bm{x}}^{k}+\gamma_k^{-1} \bm{\xi}^k-\gamma_k^{-1} A^{\top} {\bm{z}}^{k,t}\big)-\bm{w}^{k,t}\right], \\
\delta^1_{k,t}
&:= g(\widetilde{\bm{w}}^{k,t}) - g(\bm{w}^{k,t})
- \langle{\bm{d}}_1^{k,t},\,\widetilde{\bm{w}}^{k,t}-\bm{w}^{k,t}\rangle, \\
\bm{d}_2^{k,t}
&:=\gamma_k\left[\big(A{\bm{x}}^k-\bm{b}+\gamma_k^{-1}\bm{z}^{k,t}\big)
- \Pi_{\kappa}\big(A\bm{x}^k-\bm{b}+\gamma_k^{-1}\bm{z}^{k,t}\big)\right], \\
\delta_{k,t}^2
&:=\langle \bm{e}^{k,t} - A(\widetilde{\bm{w}}^{k,t}-\bm{w}^{k,t}), \,\bm{d}_2^{k,t}\rangle.
\end{aligned}
\end{equation*}
If $(\widetilde{\bm{w}}^{k,t},\Delta^{k,t},\delta^1_{k,t},\delta^2_{k,t})$ satisfies the termination criterion
\begin{equation}\label{SC1new_l12con}
\|\Delta^{k,t}\|^2
+ |\langle\Delta^{k,t}, \,\widetilde{\bm{w}}^{k,t}-\bm{x}^{k}\rangle|
+ \delta^1_{k,t} + \delta^2_{k,t}
\leq \frac{\sigma\gamma_k}{2}\left(\big\|\widetilde{\bm{w}}^{k,t}-\bm{x}^k\big\|^2
+ \big\|A\widetilde{\bm{w}}^{k,t}-A\bm{x}^k\big\|^2\right),
\end{equation}
then the stopping criterion (SC1) holds for $\bm{x}^{k+1}:=\widetilde{\bm{w}}^{k,t}$, $\Delta^k:=\Delta^{k,t}$, and $\delta_k:=\delta^1_{k,t} + \delta^2_{k,t}$.
Similarly, if $(\widetilde{\bm{w}}^{k,t},\Delta^{k,t},\delta^1_{k,t},\delta^2_{k,t})$ satisfies the termination criterion
\begin{equation} \label{SC2new_l12con}
\|\Delta^{k,t}\|^2
+ |\langle\Delta^{k,t}, \,\widetilde{\bm{w}}^{k,t}-\bm{x}^{k}\rangle|
+ \delta^1_{k,t} + \delta^2_{k,t}
\leq \frac{\sigma\gamma_k}{2}\left(\big\|\bm{x}^k-\bm{x}^{k-1}\big\|^2
+ \big\|A\bm{x}^k-A\bm{x}^{k-1}\big\|^2\right),
\end{equation}
then the stopping criterion (SC2) holds for $\bm{x}^{k+1}:=\widetilde{\bm{w}}^{k,t}$, $\Delta^k :=\Delta^{k,t}$, and $\delta_k:=\delta^1_{k,t} + \delta^2_{k,t}$.
\end{proposition}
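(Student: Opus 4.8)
The plan is to reduce everything to verifying the inexact optimality condition \eqref{iBPDCA-inexcond} for the subproblem \eqref{subprobleml12con} at the point $\bm{x}^{k+1}:=\widetilde{\bm{w}}^{k,t}$ with the stated $\Delta^k:=\Delta^{k,t}$ and $\delta_k:=\delta^1_{k,t}+\delta^2_{k,t}$; once this is established, (SC1) and (SC2) follow at once by inserting the closed form $\mathcal{D}_{\phi}(\bm{x},\bm{y})=\tfrac12\|\bm{x}-\bm{y}\|^2+\tfrac12\|A\bm{x}-A\bm{y}\|^2$ (recall $\phi(\bm{x})=\tfrac12\|\bm{x}\|^2+\tfrac12\|A\bm{x}\|^2$) into the right-hand sides of \eqref{SC1new_l12con}--\eqref{SC2new_l12con}, so that those inequalities become literally the relations (SC1) and (SC2). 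Since $f\equiv0$ and $\nabla\phi(\bm{x})=(I+A^{\top}A)\bm{x}$, condition \eqref{iBPDCA-inexcond} here amounts to showing
\[
\Delta^{k,t}-\gamma_k(I+A^{\top}A)(\widetilde{\bm{w}}^{k,t}-\bm{x}^k)+\bm{\xi}^k\in\partial_{\delta^1_{k,t}+\delta^2_{k,t}}P_1(\widetilde{\bm{w}}^{k,t}),
\]
where $P_1=g+h$ with $g=\|\cdot\|_1+\iota_M$ and $h(\cdot)=\iota_{\kappa}(A\cdot-\bm{b})$. As a preliminary step I would check that $\rho_{k,t}\in[0,1]$ and hence that $\widetilde{\bm{w}}^{k,t}$ is feasible for \eqref{subprobleml12con} — this uses convexity of the box and of the norm ball together with $\|A\bm{x}^{\text{feas}}-\bm{b}\|<\kappa$ — so that $\widetilde{\bm{w}}^{k,t}\in\mathrm{dom}\,P_1\cap\mathrm{int}\,\mathrm{dom}\,\phi=\mathbb{R}^n$, as \eqref{iBPDCA-inexcond} requires.

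Next I would construct the two pieces of the $\delta_k$-subgradient of $P_1$ separately and glue them via the elementary inclusion $\partial_{\varepsilon_1}g(\bm{u})+\partial_{\varepsilon_2}h(\bm{u})\subseteq\partial_{\varepsilon_1+\varepsilon_2}(g+h)(\bm{u})$. From the optimality characterization of $\texttt{prox}_{\gamma_k^{-1}g}$ at the argument $\bm{x}^k+\gamma_k^{-1}\bm{\xi}^k-\gamma_k^{-1}A^{\top}\bm{z}^{k,t}$ one gets $\bm{d}_1^{k,t}\in\partial g(\bm{w}^{k,t})$; then the subgradient inequality for $g$ at $\bm{w}^{k,t}$, combined with the defining expression for $\delta^1_{k,t}$, yields both $\delta^1_{k,t}\ge0$ and $\bm{d}_1^{k,t}\in\partial_{\delta^1_{k,t}}g(\widetilde{\bm{w}}^{k,t})$. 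For the constraint term, the projection identity gives $\bm{d}_2^{k,t}\in\partial\iota_{\kappa}\!\big(\Pi_{\kappa}(A\bm{x}^k-\bm{b}+\gamma_k^{-1}\bm{z}^{k,t})\big)$; testing the subgradient inequality of $\iota_{\kappa}$ at this projection point against any competitor $A\bm{y}-\bm{b}$ lying in the ball, and rewriting $\Pi_{\kappa}(A\bm{x}^k-\bm{b}+\gamma_k^{-1}\bm{z}^{k,t})+\bm{b}=\bm{e}^{k,t}+A\bm{w}^{k,t}$ through the formula for $\nabla\Psi_k^{\text{con}}$, one obtains $\langle\bm{d}_2^{k,t},\,A\bm{y}-A\widetilde{\bm{w}}^{k,t}\rangle\le\delta^2_{k,t}$ for every feasible $\bm{y}$, i.e.\ $A^{\top}\bm{d}_2^{k,t}\in\partial_{\delta^2_{k,t}}h(\widetilde{\bm{w}}^{k,t})$, with $\delta^2_{k,t}\ge0$ recovered by taking $\bm{y}=\widetilde{\bm{w}}^{k,t}$.

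It then remains to do the bookkeeping: substitute $\bm{w}^{k,t}=\texttt{prox}_{\gamma_k^{-1}g}(\cdot)$ and $\bm{e}^{k,t}=-A\bm{w}^{k,t}+\Pi_{\kappa}(\cdot)+\bm{b}$ into $\bm{d}_1^{k,t}+A^{\top}\bm{d}_2^{k,t}$, cancel the $\pm A^{\top}\bm{z}^{k,t}$ terms, and verify the identity $\bm{d}_1^{k,t}+A^{\top}\bm{d}_2^{k,t}=-\gamma_k(I+A^{\top}A)(\bm{w}^{k,t}-\bm{x}^k)+\bm{\xi}^k-\gamma_k A^{\top}\bm{e}^{k,t}$; adding $\gamma_k(I+A^{\top}A)(\widetilde{\bm{w}}^{k,t}-\bm{x}^k)-\bm{\xi}^k$ to both sides produces exactly $\gamma_k(I+A^{\top}A)(\widetilde{\bm{w}}^{k,t}-\bm{w}^{k,t})-\gamma_k A^{\top}\bm{e}^{k,t}=\Delta^{k,t}$, which is \eqref{iBPDCA-inexcond}. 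Finally, rewriting $\mathcal{D}_{\phi}(\widetilde{\bm{w}}^{k,t},\bm{x}^k)=\tfrac12\|\widetilde{\bm{w}}^{k,t}-\bm{x}^k\|^2+\tfrac12\|A\widetilde{\bm{w}}^{k,t}-A\bm{x}^k\|^2$ (and the analogous quantity at $\bm{x}^k,\bm{x}^{k-1}$) turns \eqref{SC1new_l12con} into (SC1) and \eqref{SC2new_l12con} into (SC2). The main obstacle I anticipate is the $\delta^2$-subgradient estimate for the composite indicator $h=\iota_{\kappa}(A\cdot-\bm{b})$: one must push the subgradient of $\iota_{\kappa}$ through $A$, correctly absorb the discrepancy $A(\widetilde{\bm{w}}^{k,t}-\bm{w}^{k,t})$ introduced by the retraction, and match the residual term with $\langle\bm{e}^{k,t}-A(\widetilde{\bm{w}}^{k,t}-\bm{w}^{k,t}),\,\bm{d}_2^{k,t}\rangle$; the rest is linear algebra that the definitions of $\Delta^{k,t}$, $\bm{d}_1^{k,t}$, $\bm{d}_2^{k,t}$, $\delta^1_{k,t}$, $\delta^2_{k,t}$ have been engineered to make cancel.
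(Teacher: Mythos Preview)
Your proposal is correct and follows essentially the same route as the paper: derive $\bm{d}_1^{k,t}\in\partial g(\bm{w}^{k,t})$ from the prox characterization and shift it to $\partial_{\delta^1_{k,t}}g(\widetilde{\bm{w}}^{k,t})$; derive $\bm{d}_2^{k,t}$ from the projection inequality, rewrite the projected point via $\nabla\Psi_k^{\text{con}}$, and shift to an $\varepsilon$-subgradient at $A\widetilde{\bm{w}}^{k,t}-\bm{b}$; then combine via $\partial_{\varepsilon_1}g+\partial_{\varepsilon_2}h\subseteq\partial_{\varepsilon_1+\varepsilon_2}(g+h)$ and do the linear-algebra bookkeeping to recover $\Delta^{k,t}$ in \eqref{iBPDCA-inexcond}. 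Your recovery of $\delta^2_{k,t}\ge0$ by testing at $\bm{y}=\widetilde{\bm{w}}^{k,t}$ is in fact slightly cleaner than the paper, which tacitly replaces $\delta^2_{k,t}$ by its absolute value in the proof.
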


Similar to discussions at the end of subsection \ref{sec_l12reg_ssn}, one can also see from Proposition \ref{pro-scnew-l12con} that our inexact stopping criterion (SC1) or (SC2) holds as long as the checkable inequality \eqref{SC1new_l12con} or \eqref{SC2new_l12con} holds. Moreover, when the dual sequence $\{\bm{z}^{k,t}\}$ generated by the {\sc Ssn} method is convergent under proper conditions (see \cite[Theorems 3.4 and 3.5]{zst2010newton}), we have that $\{\bm{w}^{k,t}\}$ is bounded, $\bm{e}^{k,t}\to0$ and $\widetilde{\bm{w}}^{k,t}-\bm{w}^{k,t}\to0$. Thus, we see that $\|\Delta^{k,t}\|^2
+ |\langle\Delta^{k,t}, \,\widetilde{\bm{w}}^{k,t}-\bm{x}^{k}\rangle|
+ \delta^1_{k,t} + \delta^2_{k,t}\to0$. On the other hand, when $\bm{x}^k$ is not the optimal solution of the $k$-th subproblem \eqref{subprobleml12con} or $\bm{x}^{k}\neq\bm{x}^{k-1}$, the right-hand-side term in \eqref{SC1new_l12con} or \eqref{SC2new_l12con} cannot approach zero. Therefore, the inequality \eqref{SC1new_l12con} or \eqref{SC2new_l12con} must hold after finitely many iterations.

\begin{remark}[\textbf{Comments on practical verifications of \eqref{SC1new_l12con} and \eqref{SC2new_l12con}}]\label{rek-verif}
One may have noticed from the above that, we choose the kernel function as $\phi(\bm{x}) = \frac{1}{2}\|\bm{x}\|^2 + \frac{1}{2}\|A\bm{x}\|^2$ so that an efficient dual {\sc Ssn} method can be readily applied for solving the subproblem.
However, compared with the verification of inequality \eqref{SC1new_l12reg} or \eqref{SC2new_l12reg}, the verification of inequality \eqref{SC1new_l12con} or \eqref{SC2new_l12con} is more involved and demands more computational resources. Specifically, it necessitates the retraction of $\bm{w}^{k,t}$ to obtain $\widetilde{\bm{w}}^{k,t}$ and the computation of a more expensive Bregman distance on the right-hand-side of inequity \eqref{SC1new_l12con} or \eqref{SC2new_l12con}; the cost of the latter becomes substantial as the problem size increases. In this case, the stopping criterion (SC2), corresponding to inequality \eqref{SC2new_l12con}, may offer more benefits in practical implementations. First, at the $k$-th outer iteration, (SC1) should compute the Bregman distance between $\widetilde{\bm{w}}^{k,t}$ and $\bm{x}^k$ to calculate the tolerance $\epsilon_{k,t}:=\frac{\sigma\gamma_k}{2}\left(\big\|\widetilde{\bm{w}}^{k,t}-\bm{x}^k\big\|^2
+ \big\|A\widetilde{\bm{w}}^{k,t}-A\bm{x}^k\big\|^2\right)$ on the right-hand-side of inequality \eqref{SC1new_l12con} at every inner iteration. In contrast, (SC2) merely requires to compute the Bregman distance between $\bm{x}^k$ and $\bm{x}^{k-1}$ once at the beginning of the inner loop to calculate the tolerance $\epsilon_k:=\frac{\sigma\gamma_k}{2}\big(\big\|\bm{x}^k-\bm{x}^{k-1}\big\|^2
+ \big\|A\bm{x}^k-A\bm{x}^{k-1}\big\|^2\big)$ on the right-hand-side of inequality \eqref{SC2new_l12con}. Second, once the tolerance $\epsilon_k$ is calculated, we can further employ an economical way to check inequality \eqref{SC2new_l12con} (and hence (SC2)) during the inner iterations. Specifically, we first compute $\|\nabla\Psi_k^{\text{con}}\big(\bm{z}^{k,t}\big)\|$ which is very cheap, and only start to retract $\bm{w}^{k,t}$ and check inequality \eqref{SC2new_l12con} when $\|\nabla\Psi_k^{\text{con}}\big(\bm{z}^{k,t}\big)\|\leq\epsilon_k$. This strategy would help us to postpone the explicit construction of $\widetilde{\bm{w}}^{k,t}$ and the calculation of error quantities on the left-hand-side of \eqref{SC2new_l12con} as long as possible to save cost, while still enforcing inequality \eqref{SC2new_l12con} (and hence (SC2)) to guarantee the convergence. In contrast, this economical checking strategy might not be available for the verification of inequality \eqref{SC1new_l12reg} (and hence (SC1)) since calculating the tolerance $\epsilon_{k,t}$ necessitates the explicit construction of $\widetilde{\bm{w}}^{k,t}$ at every inner iteration. In view of the above, the stopping criterion (SC2) can be a more advantageous option especially when solving large-sacle problems, as evidenced in Tables \ref{Table1-l12con} and \ref{Table2-l12con}.
\end{remark}

\subsubsection{Comparison results}

We will evaluate the performances of the iBPDCA with (SC1) (denoted by iBPDCA-SC1) and
the iBPDCA with (SC2) (denoted by iBPDCA-SC2), using the same parameter settings as described in the first paragraph of subsection \ref{sec-comp-l12reg} with additional parameters $\tau_1$, $\tau_2$ in Algorithm \ref{algo:SSNcon} being set to $\tau_1=0.99$ and $\tau_2=10^{-6}$. We will also compare our methods with a recently developed extended sequential quadratic method with extrapolation (ESQMe)\footnote{The {\sc Matlab} codes of the ESQMe for solving \eqref{probleml12con} are available at \url{https://www.polyu.edu.hk/ama/profile/pong/ESQMe_codes/}.} \cite{zpx2023extended}. The ESQMe for solving problem \eqref{probleml12con} is roughly given as follows: let $c(\bm{x}):=\frac{1}{2}\big(\|A\bm{x}-\bm{b}\|^2-\kappa^2\big)$, $L_A:=\lambda_{\max}(A^{\top}A)$, $\bm{x}^{-1} = \bm{x}^0$, choose proper extrapolation parameters $\{\beta_k\}$, and then at the $k$-th iteration,
\begin{equation*}
\left\{\begin{aligned}
&\textbf{Step 1}. ~~\mathrm{Take}~\mathrm{any}~\bm{\xi}^k \in
\partial P_2(\bm{x}^k)~\mathrm{and}~\mathrm{compute}~\bm{y}^k = \bm{x}^k + \beta_k (\bm{x}^k - \bm{x}^{k-1}),  \\
&\textbf{Step 2}. ~~(\bm{x}^{k+1},\,s_{k+1}) = \mathop{\mathrm{argmin}}\limits_{(\bm{x},\,s)\in\mathbb{R}^{n+1}}
~\|\bm{x}\|_1 - \langle\bm{\xi}^k, \,\bm{x}\rangle + \theta_k s
+ \frac{\theta_kL_A}{2}\|\bm{x}-\bm{y}^k\|^2 \\
&\hspace{4.8cm} \mathrm{s.t.} \hspace{0.8cm} c(\bm{y}^k) + \langle\nabla c(\bm{y}^k), \,\bm{x}-\bm{y}^k\rangle \leq s, ~\|\bm{x}\|_{\infty}\leq M, ~s\geq0, \\
&\textbf{Step 3}. ~~\mathrm{update}~\theta_k~\mbox{under a proper criterion.}
\end{aligned}\right.
\end{equation*}
We follow
\cite[Section 6.1]{zpx2023extended} to choose the extrapolation parameters $\{\beta_k\}$ and other involved parameters.

We initialize all methods with a point $\bm{x}^0$ generated as follows: We first apply the popular solver SPGL1 (version 2.1) \cite{vf2009probing} for solving the classical constrained $\ell_1$ sparse optimization problem: $\min\big\{\|\bm{x}\|_1:\|A \bm{x}-\bm{b}\|\leq\kappa\big\}$, using 200 iterations to obtain an approximate solution $\bm{x}_{\text{spgl1}}$. Since $\bm{x}_{\text{spgl1}}$ may violate the constraint slightly, we then adapt the same retraction strategy as described before Proposition \ref{pro-scnew-l12con} to retract $\bm{x}_{\text{spgl1}}$ and then set the resulting point as $\bm{x}^0$. Moreover, for iBPDCA-SC2, in order to prevent an improper choice of $\bm{x}^{-1}$, we will use (SC1) as a warm start at the first iteration and then transition to (SC2) thereafter. Finally, we terminate all methods when the number of iterations reaches 20000 or the following holds for 3 consecutive iterations:
\begin{equation*}
\max\left\{\frac{\|\bm{x}^k-\bm{x}^{k-1}\|}{1+\|\bm{x}^k\|},\, \frac{|F_{\text{con}}(\bm{x}^k)-F_{\text{con}}(\bm{x}^{k-1})|}{1+|F_{\text{con}}(\bm{x}^k)|}\right\} < 10^{-7}
\quad \mbox{or} \quad
\frac{|F_{\text{con}}(\bm{x}^k)-F_{\text{con}}(\bm{x}^{k-1})|}{1+|F_{\text{con}}(\bm{x}^k)|} < 10^{-10}.
\end{equation*}

In the following experiments, we let $\mu=0.95$ in \eqref{probleml12con} and $(m,n,s)=(100i,1000i,20i)$ for $i=5,10,15,20,25,30$. For each triple $(m, n, s)$, a random instance is generated using the same way as in subsection \ref{sec-comp-l12reg}. Moreover, we choose $\kappa=\texttt{nf}\cdot\|0.01\cdot\widehat{\bm{n}}\|$ with $\texttt{nf}\in\{1.1,\,2\}$ (the first choice is also used in \cite[Section 6.1]{zpx2023extended}), and present the average computational results for each triple $(m, n, s)$ from 20 instances in Tables \ref{Table1-l12con} and \ref{Table2-l12con}, respectively.

From Tables \ref{Table1-l12con} and \ref{Table2-l12con}, one can see that our iBPDCA-SC1 and iBPDCA-SC2 significantly outperform ESQMe in both recovery error and computational efficiency. Moreover, similar to the results on the $\ell_{1-2}$ regularized least squares problem in Section \ref{sec-comp-l12reg}, iBPDCA-SC1 and iBPDCA-SC2 exhibit
comparable performance in terms of the total number of {\sc Ssn} iterations. However, they now show a divergence in the CPU time especially when the problem size becomes larger. As highlighted in Remark \ref{rek-verif}, in this part of the experiments, we choose the kernel function as $\phi(\bm{x}) = \frac{1}{2}\|\bm{x}\|^2 + \frac{1}{2}\|A\bm{x}\|^2$ to facilitate the adaptability of the dual {\sc Ssn} method. But this choice also increases the cost of computing the associated Bregman distance, which becomes non-negligible, especially for large-scale problems. In this scenario, iBPDCA-SC2 appears to be more favorable as it can save the verification cost while still ensuring the convergence, as demonstrated in Tables \ref{Table1-l12con} and \ref{Table2-l12con}. This also validates the necessity of developing a more practically efficient inexact stopping criterion (SC2) to fit large-scale problems that often arise in real-world applications.

Finally, we test all the methods with two instances $(A,\bm{b})$ obtained from the data sets \texttt{mpg7} and \texttt{gisette} in the UCI data repository. For each data set, we choose $\kappa=\kappa_c\|\bm{b}\|$ with $\kappa_c\in\{0.05, 0.1, 0.2\}$. The numerical results are reported in Figure \ref{Figl1l2conreal}, where we plot the objective value $F_{\text{con}}(\bm{x}^k)$ against the computational time.
The results further demonstrate the encouraging performance of our iBPDCA-SC1 and iBPDCA-SC2 on real data sets.

\begin{table}[ht]
\caption{The average computational results for each triple $(m, n, s)$ from 20 instances on the constrained $\ell_{1-2}$ sparse optimization problem with $\kappa=1.1\cdot\|0.01\cdot\widehat{\bm{n}}\|$. In the table, ``\texttt{obj}" denotes the objective function value, ``\texttt{feas}" denotes the violation of the constraint $\|A\bm{x}^k-\bm{b}\|-\kappa$, ``\texttt{rec}" denotes the recovery error $\frac{\|\bm{x}^k-\bm{x}_{\text{orig}}\|}{1+\|\bm{x}_{\text{orig}}\|}$, ``\texttt{iter}" denotes the number of iterations (the total number of the {\sc Ssn} iterations in iBPDCA is also given in the bracket), ``\texttt{time}" denotes the computational time, and ``\texttt{t0}" denotes the computational time used to obtain an initial point by SPGL1 using 200 iterations.}\label{Table1-l12con}
\centering \tabcolsep 5pt
\scalebox{1}
{\renewcommand\arraystretch{1}
\begin{tabular}[!]{lllllllll}
\hline
$(m,n,s)$ & {\tt method} & \texttt{obj} & \texttt{feas} & \texttt{rec}
& \texttt{iter} & \texttt{time} & \texttt{t0} \\
\hline \vspace{-4mm}\\
(500,5000,100)
&ESQMe     & 6.95e+1 & 5.11e-11 & 1.31e-2 & 9704 & 4.45 & 0.10     \\
&iBPDCA-SC1 & 6.95e+1 & 6.72e-14 & 9.33e-3 & 14 (182) & 0.94 & 0.10     \\
&iBPDCA-SC2 & 6.95e+1 & 7.00e-14 & 9.33e-3 & 14 (181) & 0.91 & 0.10     \\[5pt]
(1000,10000,200)
&ESQMe     & 1.49e+2 & 5.26e-09 & 3.51e-2 & 16278 & 75.86 & 0.84     \\
&iBPDCA-SC1 & 1.49e+2 & 4.73e-12 & 1.25e-2 & 17 (234) & 6.90 & 0.84     \\
&iBPDCA-SC2 & 1.49e+2 & 7.95e-12 & 1.25e-2 & 17 (236) & 6.34 & 0.84    \\[5pt]
(1500,15000,300)
&ESQMe     & 2.27e+2 & 2.60e-07 & 1.36e-1 & 18371 & 226.52 & 2.23    \\
&iBPDCA-SC1 & 2.25e+2 & 1.42e-11 & 1.66e-2 & 19 (279) & 20.10 & 2.23    \\
&iBPDCA-SC2 & 2.25e+2 & 8.39e-12 & 1.66e-2 & 19 (283) & 18.62 & 2.23   \\[5pt]
(2000,20000,400)
&ESQMe     & 3.24e+2 & 2.89e-07 & 2.70e-1 & 19621 & 439.05 & 4.04     \\
&iBPDCA-SC1 & 3.00e+2 & 1.67e-11 & 2.48e-2 & 24 (324) & 44.70 & 4.04     \\
&iBPDCA-SC2 & 3.00e+2 & 8.15e-12 & 2.48e-2 & 24 (327) & 41.15 & 4.04    \\[5pt]
(2500,25000,500)
&ESQMe     & 4.91e+2 & 1.01e-06 & 4.82e-1 & 19940 & 696.39 & 6.25    \\
&iBPDCA-SC1 & 3.80e+2 & 3.45e-12 & 3.78e-2 & 25 (323) & 72.30 & 6.25       \\
&iBPDCA-SC2 & 3.80e+2 & 3.47e-12 & 3.78e-2 & 25 (326) & 67.11 & 6.25    \\[5pt]
(3000,30000,600)
&ESQMe     & 7.83e+2 & 1.07e-06 & 7.54e-1 & 19690 & 994.75 & 8.97    \\
&iBPDCA-SC1 & 4.52e+2 & 6.54e-12 & 2.77e-2 & 23 (348) & 121.16 & 8.97       \\
&iBPDCA-SC2 & 4.52e+2 & 6.52e-12 & 2.77e-2 & 23 (347) & 112.05 & 8.97    \\[3pt]
\hline
\end{tabular}
}
\end{table}

\begin{table}[ht]
\caption{
Same as Table 3 but for the constrained $\ell_{1-2}$ sparse optimization problem with
$\kappa=2\cdot\|0.01\cdot\widehat{\bm{n}}\|$.
}\label{Table2-l12con}
\centering \tabcolsep 5pt
\scalebox{1}
{\renewcommand\arraystretch{1}
\begin{tabular}[!]{lllllllll}
\hline
$(m,n,s)$ & {\tt method} & \texttt{obj} & \texttt{feas} & \texttt{rec}
& \texttt{iter} & \texttt{time} & \texttt{t0} \\
\hline \vspace{-4mm}\\
(500,5000,100)
&ESQMe     & 6.94e+1 &-5.22e-14 & 1.12e-2 & 4909 & 2.30 & 0.11     \\
&iBPDCA-SC1 & 6.94e+1 & 4.57e-13 & 1.07e-2 & 15 (149) & 0.78 & 0.11     \\
&iBPDCA-SC2 & 6.94e+1 & 4.42e-13 & 1.07e-2 & 15 (151) & 0.77 & 0.11     \\[5pt]
(1000,10000,200)
&ESQMe     & 1.49e+2 & 3.73e-11 & 1.95e-2 & 10289 & 48.02 & 0.84     \\
&iBPDCA-SC1 & 1.49e+2 & 1.45e-11 & 1.37e-2 & 17 (189) & 5.48 & 0.84     \\
&iBPDCA-SC2 & 1.49e+2 & 5.50e-12 & 1.37e-2 & 17 (190) & 5.05 & 0.84   \\[5pt]
(1500,15000,300)
&ESQMe     & 2.25e+2 & 1.24e-10 & 3.34e-2 & 12798 & 157.66 & 2.18     \\
&iBPDCA-SC1 & 2.25e+2 & 1.42e-11 & 1.76e-2 & 18 (224) & 16.02 & 2.18     \\
&iBPDCA-SC2 & 2.25e+2 & 1.65e-11 & 1.76e-2 & 18 (226) & 14.70 & 2.18    \\[5pt]
(2000,20000,400)
&ESQMe     & 3.03e+2 & 3.50e-08 & 1.24e-1 & 17412 & 388.69 & 4.07     \\
&iBPDCA-SC1 & 3.00e+2 & 7.08e-12 & 2.59e-2 & 23 (266) & 35.92 & 4.07     \\
&iBPDCA-SC2 & 3.00e+2 & 5.16e-12 & 2.59e-2 & 23 (269) & 33.34 & 4.07    \\[5pt]
(2500,25000,500)
&ESQMe     & 4.34e+2 & 2.32e-07 & 3.22e-1 & 18441 & 646.75 & 6.27     \\
&iBPDCA-SC1 & 3.80e+2 & 3.11e-12 & 3.84e-2 & 25 (275) & 61.33 & 6.27        \\
&iBPDCA-SC2 & 3.80e+2 & 4.97e-12 & 3.84e-2 & 25 (278) & 56.88 & 6.27     \\[5pt]
(3000,30000,600)
&ESQMe     & 8.11e+2 & 3.92e-07 & 7.15e-1 & 18461 & 936.02 & 9.09       \\
&iBPDCA-SC1 & 4.51e+2 & 1.07e-11 & 2.83e-2 & 23 (297) & 102.13 & 9.09        \\
&iBPDCA-SC2 & 4.51e+2 & 7.72e-12 & 2.83e-2 & 23 (301) & 95.87 & 9.09    \\[3pt]
\hline
\end{tabular}
}
\end{table}

\begin{figure}[ht]
\centering
\subfigure[\texttt{mpg7}, where $A$ is of size $392\times3432$ with $\lambda_{\max}(A^{\top}A)\approx1.28\times10^4$]{
\includegraphics[width=5cm]{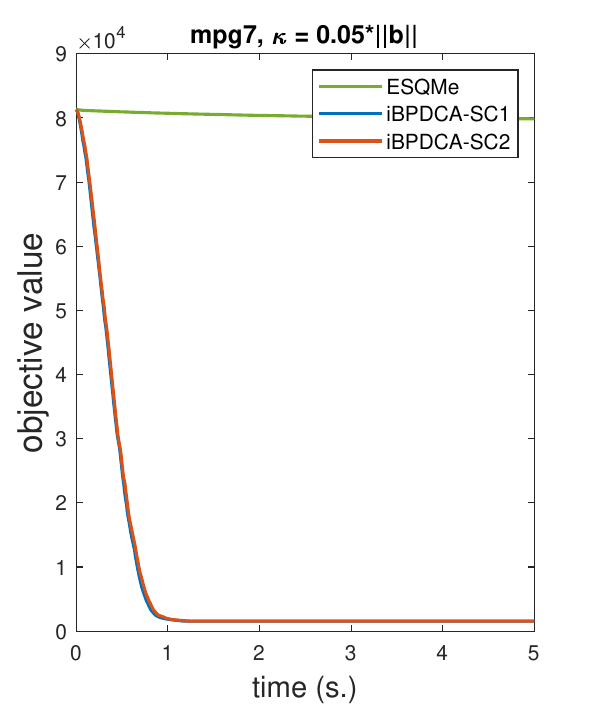}
\includegraphics[width=5cm]{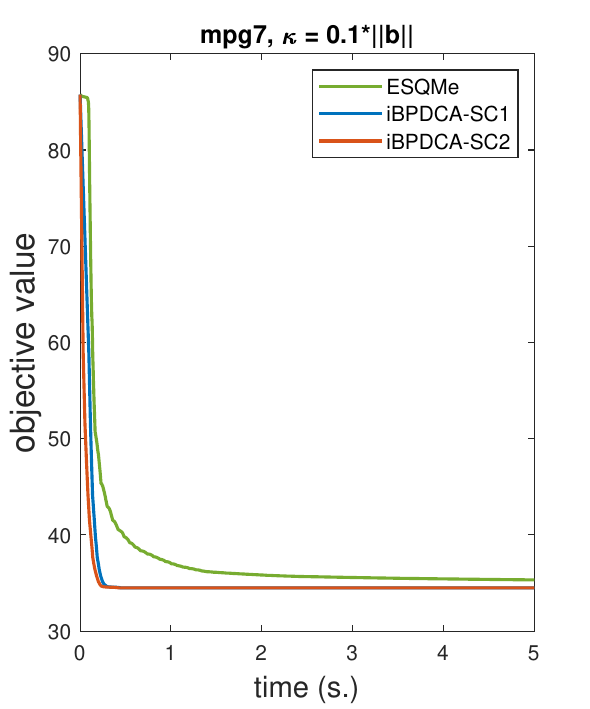}
\includegraphics[width=5cm]{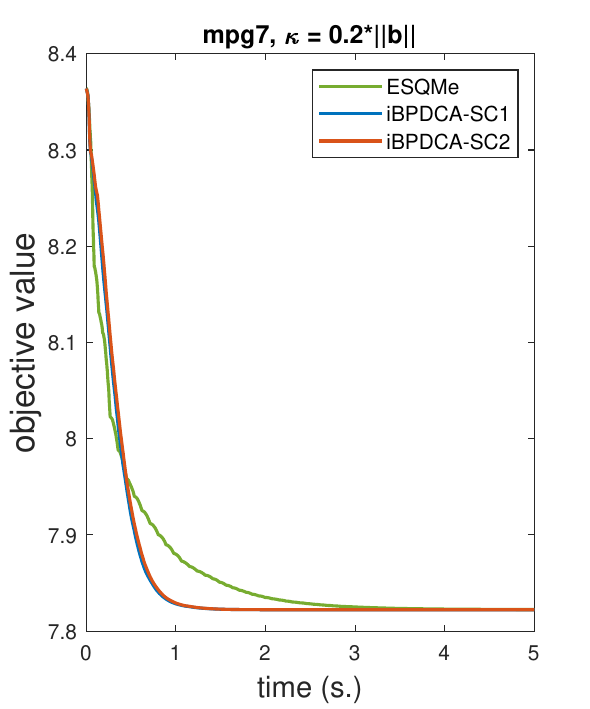}
}
\subfigure[\texttt{gisette}, where $A$ is of size $1000\times4971$ with $\lambda_{\max}(A^{\top}A)\approx3.36\times10^6$]{
\includegraphics[width=5cm]{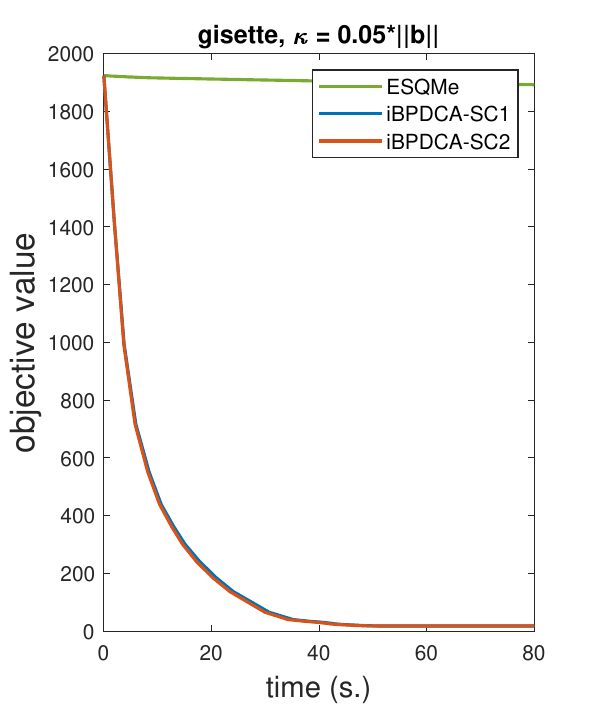}
\includegraphics[width=5cm]{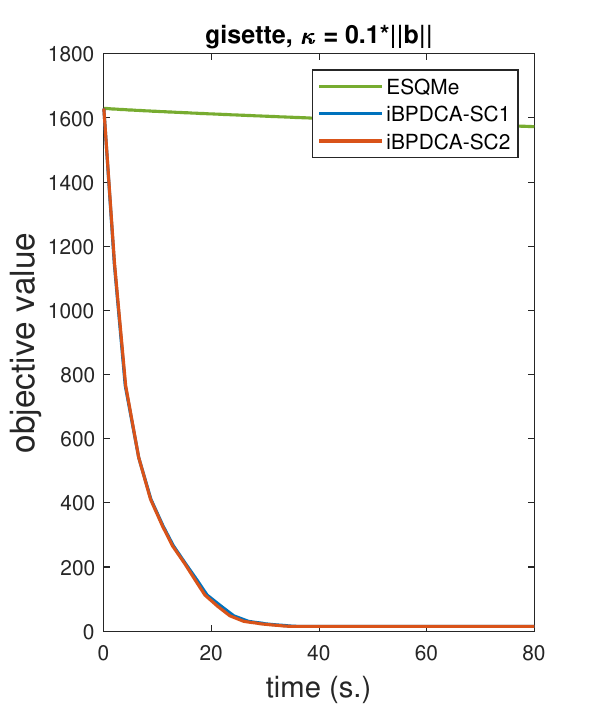}
\includegraphics[width=5cm]{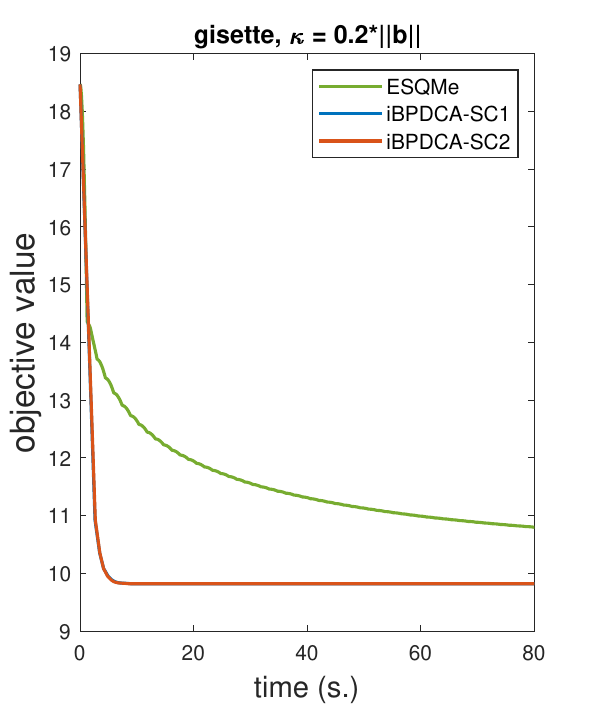}
}
\caption{Numerical results of ESQMe, iBPDCA-SC1 and iBPDCA-SC2 for solving the constrained $\ell_{1-2}$ sparse optimization problem on \texttt{mpg7} and \texttt{gisette} from the UCI data repository.}\label{Figl1l2conreal}
\end{figure}

\section{Conclusions}\label{sec-conc}

In this paper, we developed an inexact Bregman proximal DC algorithm (iBPDCA) based on two types of relative stopping criteria for solving a class of DC optimization problems in the form of \eqref{DCpro}, and established both global subsequential and global sequential convergence results under suitable conditions. In comparison to existing BPDCA and many other proximal DC-type algorithms, our iBPDCA offers a comprehensive inexact algorithmic framework that can encompass many of these existing methods and accommodate different types of errors that may occur when solving the subproblem. The inherent versatility and flexibility of our iBPDCA readily enable its potential application across different DC decompositions, thereby facilitating the design of a more efficient DCA scheme. We conducted some numerical experiments on the $\ell_{1-2}$ regularized least squares problem and the constrained $\ell_{1-2}$ sparse optimization problem to demonstrate the efficiency of our iBPDCA with different types of stopping criteria.

Note that, in \cite{tft2022new}, the authors also proposed a Bregman proximal DC algorithm with extrapolation (BPDCAe), which demonstrated better numerical performance compared to BPDCA. Exploring the potential extension of the BPDCAe algorithm to an inexact version is both interesting and valuable. However, such an extension would require substantial additional effort and is beyond the scope of the current paper. We will consider this as a future research topic.

\section*{Acknowledgments}

We thank the editor and referees for their valuable suggestions and comments, which have helped to improve the quality of this paper.

\appendix

\section{Missing proofs and derivations in Section \ref{sec-num}}

\subsection{Derivation of the dual problem \eqref{subprobdual_l12reg}}\label{apd-subprobdual_l12reg}

First, the associated Lagrangian function of \eqref{subprobref_l12reg} is given by
\begin{equation*}
\begin{aligned}
\mathcal{L}(\bm{x},\bm{y},\bm{z})
&=\lambda\|\bm{x}\|_1-\langle\bm{\xi}^k,\bm{x}\rangle+\frac{1}{2}\|\bm{y}-\bm{b}\|^2
+\frac{\gamma_k}{2}\|\bm{x}-\bm{x}^k\|^2+\langle\bm{z},\bm{\bm{A}}\bm{x}-\bm{y}\rangle \\
&= \lambda\|\bm{x}\|_1
+ \frac{\gamma_k}{2}\|\bm{x}-\bm{v}_k(\bm{z})\|^2
  -\frac{\gamma_k}{2}\left\|\bm{v}_k(\bm{z})\right\|^2
+ \frac{\gamma_k}{2} \|\bm{x}^k\|^2
+ \frac{1}{2}\|\bm{y}-\bm{b}-\bm{z}\|^2 \\
&\qquad -\frac{1}{2}\|\bm{z}\|^2 - \langle\bm{z},\,\bm{b}\rangle,
\end{aligned}
\end{equation*}
where $\bm{z}\in\mathbb{R}^m$ is the dual variable and $\bm{v}_k(\bm{z}) := \gamma_k^{-1}\bm{\xi}^k+\bm{x}^k-\gamma_k^{-1}A^{\top}\bm{z}$.
Then, the dual problem of \eqref{subprobref_l12reg} is give by
\begin{equation*}
\max\limits_{\bm{z}\in\mathbb{R}^m}\left\{\,\min_{\bm{x}\in\mathbb{R}^n,\,\bm{y}\in\mathbb{R}^m}
~\mathcal{L}(\bm{x},\bm{y},\bm{z})\,\right\}.
\end{equation*}
Observe that
\begin{equation*}
\begin{aligned}
&~\min_{\bm{x}\in\mathbb{R}^n,\,\bm{y}\in\mathbb{R}^m}~\mathcal{L}(\bm{x},\bm{y},\bm{z})\\
&=\min_{\bm{x}\in\mathbb{R}^n}\left\{\lambda\|\bm{x}\|_1
+ \frac{\gamma_k}{2}\|\bm{x}-\bm{v}_k(\bm{z})\|^2\right\}
- \frac{\gamma_k}{2}\left\|\bm{v}_k(\bm{z})\right\|^2
+ \frac{\gamma_k}{2} \|\bm{x}^k\|^2 \\
&\qquad + \min_{\bm{y}\in\mathbb{R}^m}\left\{\frac{1}{2}\|\bm{y}-\bm{b}-\bm{z}\|^2
\right\}
-\frac{1}{2}\|\bm{z}\|^2 - \langle\bm{z},\,\bm{b}\rangle \\
&=\lambda\left\|\texttt{prox}_{\lambda\gamma_k^{-1}\|\cdot\|_1}\!
\left(\bm{v}_k(\bm{z})\right)\right\|_1+\frac{\gamma_k}{2} \left\|\texttt{prox}_{\lambda\gamma_k^{-1}\|\cdot\|_1}\!
\left(\bm{v}_k(\bm{z})\right) - \bm{v}_k(\bm{z})\right\|^2  \\
&\qquad - \frac{\gamma_k}{2}\left\|\bm{v}_k(\bm{z})\right\|^2
+ \frac{\gamma_k}{2}\|\bm{x}^k\|^2
- \frac{1}{2}\|\bm{z}\|^2 - \langle\bm{z},\,\bm{b}\rangle.
\end{aligned}
\end{equation*}
Thus, the dual problem of \eqref{subprobref_l12reg} admits the following form
\begin{equation*}
\hspace{-2mm}
\max\limits_{\bm{z}\in\mathbb{R}^m}\!
\left\{\begin{aligned}
&
-\frac{1}{2}\|\bm{z}\|^2 - \langle\bm{z},\,\bm{b}\rangle
+\lambda\left\|\texttt{prox}_{\lambda\gamma_k^{-1}\|\cdot\|_1}\!
\left(\bm{v}_k(\bm{z})\right)\right\|_1 \\[3pt]
&~~+\frac{\gamma_k}{2} \left\|\texttt{prox}_{\lambda\gamma_k^{-1}\|\cdot\|_1}\!
\left(\bm{v}_k(\bm{z})\right)
- \bm{v}_k(\bm{z})\right\|^2
- \frac{\gamma_k}{2}\left\|\bm{v}_k(\bm{z})\right\|^2
+ \frac{\gamma_k}{2} \|\bm{x}^k\|^2
\end{aligned}\right\},
\end{equation*}
which is equivalent to \eqref{subprobdual_l12reg}, when expressed in a minimization form.

\subsection{Derivation of the dual problem \eqref{dualprobleml12con}}\label{apd-dualprobleml12con}

For notational simplicity, let $g(\cdot):=\|\cdot\|_1+\iota_{M}(\cdot)$, $\bm{s}^k:=\bm{x}^k+\gamma_k^{-1}\bm{\xi}^k$ and $\bm{b}^k:=A\bm{x}^k-\bm{b}$. Then, the associated Lagrangian function of \eqref{subprobref_l12con} is given by
\begin{equation*}
\begin{aligned}
\mathcal{L}(\bm{x},\bm{y},\bm{z})
&= g(\bm{x})
+ \frac{\gamma_k}{2}\left\|\bm{x}-\bm{s}^k\right\|^2
+ \iota_{\kappa}(\bm{y})
+ \frac{\gamma_k}{2}\left\|\bm{y} -\bm{b}^k\right\|^2
+ \langle\bm{z},\bm{\bm{A}}\bm{x}-\bm{y}-\bm{b}\rangle \\
&= g(\bm{x})
+\frac{\gamma_k}{2}\left\|\bm{x}-\big(\bm{s}^k-\gamma_k^{-1}A^{\top}\bm{z}\big)\right\|^2 - \frac{\gamma_k}{2}\left\|\bm{s}^k-\gamma_k^{-1} A^{\top} \bm{z}\right\|^2
+ \frac{\gamma_k}{2}\|\bm{s}^k\|^2 \\
&\qquad + \iota_{\kappa}(\bm{y}) +\frac{\gamma_k}{2}\left\|\bm{y}-\big(\bm{b}^k+\gamma_k^{-1}\bm{z}\big)\right\|^2  -\frac{\gamma_k}{2}\left\|\bm{b}^k+\gamma_k^{-1}\bm{z}\right\|^2 +\frac{\gamma_k}{2}\|\bm{b}^k\|^2 - \langle\bm{z},\,\bm{b}\rangle,
\end{aligned}
\end{equation*}
where $\bm{z}\in\mathbb{R}^m$ is the dual variable.
Then, the dual problem of \eqref{subprobref_l12con} is give by
\begin{equation*}
\max\limits_{\bm{z}\in\mathbb{R}^m}\left\{\,\min_{\bm{x}\in\mathbb{R}^n,\,\bm{y}\in\mathbb{R}^m}
~\mathcal{L}(\bm{x},\bm{y},\bm{z})\,\right\}.
\end{equation*}
Observe that
\begin{equation*}
\begin{aligned}
&~\min_{\bm{x}\in\mathbb{R}^n,\,\bm{y}\in\mathbb{R}^m}~\mathcal{L}(\bm{x},\bm{y},\bm{z})\\
&=\min_{\bm{x}\in\mathbb{R}^n}
\left\{g(\bm{x})
+\frac{\gamma_k}{2}\left\|\bm{x}-\big(\bm{s}^k-\gamma_k^{-1}A^{\top}\bm{z}\big)\right\|^2 \right\}
- \frac{\gamma_k}{2}\left\|\bm{s}^k-\gamma_k^{-1} A^{\top} \bm{z}\right\|^2
+ \frac{\gamma_k}{2}\|\bm{s}^k\|^2 \\
&\qquad + \min_{\bm{y}\in\mathbb{R}^m}
\left\{\iota_{\kappa}(\bm{y}) +\frac{\gamma_k}{2}\left\|\bm{y}-\big(\bm{b}^k+\gamma_k^{-1}\bm{z}\big)\right\|^2
\right\}
- \frac{\gamma_k}{2}\left\|\bm{b}^k+\gamma_k^{-1}\bm{z}\right\|^2
+ \frac{\gamma_k}{2}\|\bm{b}^k\|^2
- \langle\bm{z},\,\bm{b}\rangle \\
&= \left\|\texttt{prox}_{\gamma_k^{-1}g}\left(\bm{s}^k-\gamma_k^{-1} A^{\top} \bm{z}\right)\right\|_1+\frac{\gamma_k}{2}\left\|\texttt{prox}_{\gamma_k^{-1}g}\left(\bm{s}^k-\gamma_k^{-1} A^{\top} \bm{z}\right)
-\left(\bm{s}^k-\gamma_k^{-1}A^{\top}\bm{z}\right)\right\|^2 \\
&\quad-\frac{\gamma_k}{2}\left\|\bm{s}^k-\gamma_k^{-1} A^{\top} \bm{z}\right\|^2 + \frac{\gamma_k}{2}\|\bm{s}^k\|^2\\
&\quad+\frac{\gamma_k}{2}\left\|\Pi_{\kappa}\left(\bm{b}^k+\gamma_k^{-1}\bm{z}\right)
-\left(\bm{b}^k+\gamma_k^{-1}\bm{z}\right)\right\|^2 -\frac{\gamma_k}{2}\left\|\bm{b}^k+\gamma_k^{-1}\bm{z}\right\|^2 +\frac{\gamma_k}{2}\|\bm{b}^k\|^2 - \langle\bm{z},\,\bm{b}\rangle,
\end{aligned}
\end{equation*}
where $\texttt{prox}_{\gamma_k^{-1}g}$ is the proximal mapping of $\gamma_k^{-1}g$ and $\Pi_{\kappa}$ is the projection operator over $\{\bm{x} \in \mathbb{R}^n:\|\bm{x}\|\leq \kappa\}$.
Thus, the dual problem of \eqref{subprobref_l12con} admits the following form
\begin{equation*}
\max _{\bm{z} \in \mathbb{R}^m}\left\{\begin{aligned}
&-\langle\bm{z}, \bm{b}\rangle
-\frac{\gamma_k}{2}\left\|\bm{s}^k-\gamma_k^{-1} A^{\top} \bm{z}\right\|^2
+\left\|\texttt{prox}_{\gamma_k^{-1}g}\left(\bm{s}^k-\gamma_k^{-1} A^{\top} \bm{z}\right)\right\|_1\\[3pt]
&\qquad +\frac{\gamma_k}{2}\left\|\texttt{prox}_{\gamma_k^{-1}g}\left(\bm{s}^k-\gamma_k^{-1} A^{\top} \bm{z}\right)
-\left(\bm{s}^k-\gamma_k^{-1}A^{\top}\bm{z}\right)\right\|^2\\[3pt]
&\qquad -\frac{\gamma_k}{2}\left\|\bm{b}^k+\gamma_k^{-1}\bm{z}\right\|^2
+\frac{\gamma_k}{2}\left\|\Pi_{\kappa}\left(\bm{b}^k+\gamma_k^{-1}\bm{z}\right)
-\left(\bm{b}^k+\gamma_k^{-1}\bm{z}\right)\right\|^2 \\[3pt]
&\qquad +\frac{\gamma_k}{2}\|\bm{s}^k\|^2
+\frac{\gamma_k}{2}\|\bm{b}^k\|^2
\end{aligned}\right\},
\end{equation*}
which is equivalent to \eqref{dualprobleml12con}, when expressed in a minimization form.

\subsection{Proof of Proposition \ref{pro-scnew-l12reg}}\label{apd-pro-l12reg}
\begin{proof}
From the definition of $\bm{w}^{k, t}$ and the property of the proximal mapping $\texttt{prox}_{\lambda\gamma_k^{-1}\|\cdot\|_1}$, we have
\begin{equation*}
0 \in  \lambda\gamma_k^{-1}\partial\|\bm{w}^{k,t}\|_1
+ \bm{w}^{k,t} - \big(\gamma_k^{-1}\bm{\xi}^k+\bm{x}^k-\gamma_k^{-1}A^{\top}\bm{z}^{k,t}\big),
\end{equation*}
which, together with $\bm{e}^{k,t} := \nabla\Psi_k(\bm{z}^{k,t})
= -A\bm{w}^{k,t}+\bm{z}^{k,t}+\bm{b}$, deduces that
\begin{equation*}
0 \in \lambda\partial\|\bm{w}^{k,t}\|_1
+ \gamma_k(\bm{w}^{k,t}-\bm{x}^k) - \bm{\xi}^k
+ A^{\top}\big(\bm{e}^{k,t} + A\bm{w}^{k,t}-\bm{b}\big),
\end{equation*}
and hence
\begin{equation*}
-A^{\top}\bm{e}^{k,t} \in \lambda\partial\|\bm{w}^{k,t}\|_1
- \bm{\xi}^k + A^{\top}(A\bm{w}^{k,t}-\bm{b})
+ \gamma_k(\bm{w}^{k,t}-\bm{x}^k).
\end{equation*}
It is then easy to see from the above relation that the point $\bm{w}^{k,t}$ associated with the error pair $(-A^{\top}\bm{e}^{k,t}, \,0)$ satisfies condition \eqref{iBPDCA-inexcond}. Using this relation, we can readily obtain the desired results.
\end{proof}

\subsection{Proof of Proposition \ref{pro-scnew-l12con}}\label{apd-pro-l12con}

\begin{proof}
From the definition of $\bm{w}^{k,t}$ and the property of the proximal mapping $\texttt{prox}_{\gamma_k^{-1}g}$, we have that
\begin{equation*}
0 \in \partial g(\bm{w}^{k,t})+\gamma_k\left[\bm{w}^{k,t}-\left({\bm{x}}^{k}+\gamma_k^{-1} \bm{\xi}^k-\gamma_k^{-1} A^{\top} {\bm{z}}^{k,t}\right)\right].
\end{equation*}
Let $\bm{d}_1^{k,t}:=\gamma_k\left[\big({\bm{x}}^{k}+\gamma_k^{-1} \bm{\xi}^k-\gamma_k^{-1} A^{\top}\bm{z}^{k,t}\big)-\bm{w}^{k,t}\right]$. Clearly, $\bm{d}_1^{k,t} \in \partial g(\bm{w}^{k,t})$. Then, for any $\bm{u} \in \mathrm{dom}\,g$, we have that
\begin{equation*}
\begin{aligned}
\quad g(\bm{u})
&\geq g(\bm{w}^{k,t}) + \langle{\bm{d}}_1^{k,t}, \,\bm{u}-\bm{w}^{k,t}\rangle \\
&\geq g(\widetilde{\bm{w}}^{k,t}) + \langle{\bm{d}}_1^{k,t}, \,\bm{u}-\widetilde{\bm{w}}^{k,t}\rangle
- \big( g(\widetilde{\bm{w}}^{k,t}) - g(\bm{w}^{k,t})
- \langle{\bm{d}}_1^{k,t},\,\widetilde{\bm{w}}^{k,t}-\bm{w}^{k,t}\rangle\big),
\end{aligned}
\end{equation*}
which implies that $\bm{d}_1^{k,t} \in \partial_{\delta_{k,t}^1} g(\widetilde{\bm{w}}^{k,t})$ with $\delta^1_{k,t} := g(\widetilde{\bm{w}}^{k,t}) - g(\bm{w}^{k,t})
- \langle{\bm{d}}_1^{k,t},\,\widetilde{\bm{w}}^{k,t}-\bm{w}^{k,t}\rangle\geq0$ (due to the convexity of $g$ and $\bm{d}_1^{k,t} \in \partial g(\bm{w}^{k,t})$\brown{)}. Thus, we have
\begin{equation}\label{d1partialg}
0 \in \partial_{\delta_{k,t}^1} g(\widetilde{\bm{w}}^{k,t})
- \bm{\xi}^k + A^{\top}\bm{z}^{k,t} + \gamma_k(\bm{w}^{k,t}-\bm{x}^{k}).
\end{equation}
On the other hand, we see from the properties of projection onto the convex set $\left\{\bm{x} \in \mathbb{R}^n:\|\bm{x}\|\leq \kappa\right\}$ that, for any $\bm{u}$ satisfying $\|\bm{u}\|\leq\kappa$,
\begin{equation*}
\big\langle \bm{u}-\Pi_{\kappa}\big(A \bm{x}^k-\bm{b}+\gamma_k^{-1}\bm{z}^{k,t}\big), \,\big(A{\bm{x}}^k-\bm{b}+\gamma_k^{-1}\bm{z}^{k,t}\big)
- \Pi_{\kappa}\big(A\bm{x}^k-\bm{b}+\gamma_k^{-1}\bm{z}^{k,t}\big)\big\rangle
\leq 0.
\end{equation*}
Let $\bm{d}_2^{k,t}:=\gamma_k\left[\big(A{\bm{x}}^k-\bm{b}+\gamma_k^{-1}\bm{z}^{k,t}\big)
- \Pi_{\kappa}\big(A\bm{x}^k-\bm{b}+\gamma_k^{-1}\bm{z}^{k,t}\big)\right]$ and recall the definition of $\bm{e}^{k, t}$ that $\Pi_{\kappa}\big(A\bm{x}^k-\bm{b}+\gamma_k^{-1}\bm{z}^{k,t}\big)=A \bm{w}^{k,t}-\bm{b}+\bm{e}^{k,t}$. Then, we can see from the above relation that
\begin{equation*}
\begin{aligned}
0
&\geq \big\langle \bm{u} - \big(A\bm{w}^{k,t}-\bm{b}+\bm{e}^{k,t}\big),
\,\bm{d}_2^{k,t}\big\rangle  \\
&= \langle \bm{u}-(A\widetilde{\bm{w}}^{k,t}-\bm{b}), \,\bm{d}_2^{k,t}\rangle
- \langle \bm{e}^{k,t} - A(\widetilde{\bm{w}}^{k,t}-\bm{w}^{k,t}), \,\bm{d}_2^{k,t}\rangle,
\end{aligned}
\end{equation*}
and hence
\begin{equation*}
\langle \bm{u}-(A\widetilde{\bm{w}}^{k,t}-\bm{b}), \,\bm{d}_2^{k,t}\rangle
\leq \langle \bm{e}^{k,t} - A(\widetilde{\bm{w}}^{k,t}-\bm{w}^{k,t}), \,\bm{d}_2^{k,t}\rangle
\leq \big|\langle \bm{e}^{k,t} - A(\widetilde{\bm{w}}^{k,t}-\bm{w}^{k,t}), \,\bm{d}_2^{k,t}\rangle\big|.
\end{equation*}
This, together with $\|A\widetilde{\bm{w}}^{k,t}-\bm{b}\|\leq\kappa$, implies that $\bm{d}_2^{k,t} \in \partial_{\delta_{k,t}^2}\iota_{\kappa}(A \widetilde{\bm{w}}^{k,t}-\bm{b})$ with $\delta_{k,t}^2:=|\langle \bm{e}^{k,t} - A(\widetilde{\bm{w}}^{k,t}-\bm{w}^{k,t}), \,\bm{d}_2^{k,t}\rangle|$. Thus, we have
\begin{equation}\label{d2partial-ballsigma}
-\gamma_k\bm{e}^{k,t}
\in \partial_{\delta_{k,t}^2}\iota_{\kappa}(A\widetilde{\bm{w}}^{k,t}-\bm{b})
- \bm{z}^{k,t} + \gamma_kA(\bm{w}^{k,t}-\bm{x}^{k}).
\end{equation}
Now, combining \eqref{d1partialg} and \eqref{d2partial-ballsigma}, we can obtain that
\begin{equation*}
-\gamma_kA^{\top}\bm{e}^{k,t}
\in \partial_{\delta_{k,t}^1}g(\widetilde{\bm{w}}^{k,t})
+ A^{\top}\partial_{\delta_{k,t}^2}\iota_{\kappa}(A\widetilde{\bm{w}}^{k,t}-\bm{b})
- \bm{\xi}^k + \gamma_k\big(\bm{w}^{k,t}-\bm{x}^{k}\big)
+ \gamma_kA^{\top}A\big(\bm{w}^{k,t}-\bm{x}^{k}\big),
\end{equation*}
which further implies that
\begin{equation*}
\begin{aligned}
\Delta^{k,t}
&:=-\gamma_kA^{\top}\bm{e}^{k,t} + \gamma_k(\widetilde{\bm{w}}^{k,t}-\bm{w}^{k,t})
+ \gamma_k A^{\top}A(\widetilde{\bm{w}}^{k,t}-\bm{w}^{k,t}) \\
&\in \partial_{\delta_{k,t}^1}g(\widetilde{\bm{w}}^{k,t})
+ A^{\top}\partial_{\delta_{k,t}^2}\iota_{\kappa}(A\widetilde{\bm{w}}^{k,t}-\bm{b})
- \bm{\xi}^k + \gamma_k\big(\widetilde{\bm{w}}^{k,t}-\bm{x}^{k}\big)
+ \gamma_kA^{\top}A\big(\widetilde{\bm{w}}^{k,t}-\bm{x}^{k}\big)  \\
&\subset\partial_{\delta_{k,t}}\left[g(\cdot)+\iota_{\kappa}(A\cdot-\bm{b})\right](\widetilde{\bm{w}}^{k,t})
- \bm{\xi}^k + \gamma_k\big(\widetilde{\bm{w}}^{k,t}-\bm{x}^{k}\big)
+ \gamma_kA^{\top}A\big(\widetilde{\bm{w}}^{k,t}-\bm{x}^{k}\big),
\end{aligned}
\end{equation*}
where $\delta_{k,t} := \delta^1_{k,t} + \delta^2_{k,t}$ and the last inclusion can be verified by the definition of the $\varepsilon$-subdifferential of $g(\cdot)+\iota_{\kappa}(A\cdot-\bm{b})$ at $\widetilde{\bm{w}}^{k,t}$. Then, one can see from the above relation that the point $\widetilde{\bm{w}}^{k,t}$ associated with the error pair $(\Delta^{k,t}, \,\delta_{k,t})$ satisfies condition \eqref{iBPDCA-inexcond}. Using this relation, we can readily obtain the desired results.
\end{proof}

\bibliographystyle{plain}
\bibliography{references/Ref_iBPDCA}

\end{document}